\newcommand{\fref}[1]{\prettyref{#1}}
\newcounter{dummycnt}
\newcommand{\mynewthm}[3][]{%
  \def\PARAM{#1}
  \ifx\PARAM\empty
  \newtheorem{#2}[thmnum]{#3}
  \else
  \newtheorem{#2}{#3}[#1]
  \fi
  \newtheorem*{#2*}{#3}%
  \newrefformat{#2}{#3~\ref{##1}}%
}
\newcommand{\ThmLabel}{Theorem}
\newcommand{\PrpLabel}{Proposition}
\newcommand{\LemLabel}{Lemma}
\newcommand{\FctLabel}{Fact}
\newcommand{\CorLabel}{Corollary}
\newcommand{\DfnLabel}{Definition}
\newcommand{\ConvLabel}{Convention}
\newcommand{\NtnLabel}{Notation}
\newcommand{\CstLabel}{Construction}
\newcommand{\ExmLabel}{Example}
\newcommand{\RmkLabel}{Remark}
\newcommand{\QstLabel}{Question}
\newcommand{\ThmLabel}{\iflanguage{french}{Théorème}{Theorem}}
\newcommand{\PrpLabel}{Proposition}
\newcommand{\LemLabel}{\iflanguage{french}{Lemme}{Lemma}}
\newcommand{\FctLabel}{\iflanguage{french}{Fait}{Fact}}
\newcommand{\CorLabel}{\iflanguage{french}{Corollaire}{Corollary}}
\newcommand{\DfnLabel}{\iflanguage{french}{Définition}{Definition}}
\newcommand{\ConvLabel}{Convention}
\newcommand{\NtnLabel}{Notation}
\newcommand{\CstLabel}{Construction}
\newcommand{\ExmLabel}{\iflanguage{french}{Exemple}{Example}}
\newcommand{\RmkLabel}{\iflanguage{french}{Remarque}{Remark}}
\newcommand{\QstLabel}{Question}
\theoremstyle{plain}
\theoremstyle{definition}
\theoremstyle{remark}
\newcommand{\myenumlabel}[1]{\textnormal{(\roman{#1})}}
\newcounter{cycprfcnt}
\newenvironment{cycprf}%
{\begin{list}{\PackageWarning{begnac}{Label required for cycprf}}%
  {%
    \setcounter{cycprfcnt}{1}
    \setlength{\itemindent}{0.5\leftmargin}%
    \setlength{\leftmargin}{0pt}%
    \newcommand{\cpcurr}{\myenumlabel{cycprfcnt}}%
    \newcommand{\cpnext}{\addtocounter{cycprfcnt}{1}\cpcurr}%
    \newcommand{\cpnum}[1]{\setcounter{cycprfcnt}{##1}\cpcurr}%
    \newcommand{\cpfirst}{\cpnum{1}}%
    \newcommand{\impnext}{\cpcurr{} $\Longrightarrow$ \cpnext.}%
    \newcommand{\impfirst}{\cpcurr{} $\Longrightarrow$ \cpfirst.}%
  }%
}%
{\qedhere\end{list}}%
\def\indsym#1#2{%
  \setbox0=\hbox{$\m@th#1x$}%
  \kern\wd0%
  \hbox to 0pt{\hss$\m@th#1\mid$\hbox to 0pt{$\m@th#1^{#2}$\hss}\hss}%
  \lower.9\ht0\hbox to 0pt{\hss$\m@th#1\smile$\hss}%
  \kern\wd0}
\newcommand{\ind}[1][]{\mathop{\mathpalette\indsym{#1}}}
\def\nindsym#1#2{%
  \setbox0=\hbox{$\m@th#1x$}%
  \kern\wd0%
  \hbox to 0pt{\hss$\m@th#1\not$\kern1.4\wd0\hss}
  \hbox to 0pt{\hss$\m@th#1\mid$\hbox to 0pt{$\m@th#1^{#2}$\hss}\hss}%
  \lower.9\ht0\hbox to 0pt{\hss$\m@th#1\smile$\hss}%
  \kern\wd0}
\newcommand{\nind}[1][]{\mathop{\mathpalette\nindsym{#1}}}
\def\dotminussym#1#2{%
  \setbox0=\hbox{$\m@th#1-$}%
  \kern.5\wd0%
  \hbox to 0pt{\hss\hbox{$\m@th#1-$}\hss}%
  \raise.6\ht0\hbox to 0pt{\hss$\m@th#1.$\hss}%
  \kern.5\wd0}
\newcommand{\dotminus}{\mathbin{\mathpalette\dotminussym{}}}
\renewcommand{\emptyset}{\varnothing}
\renewcommand{\setminus}{\smallsetminus}
\def\models{\vDash}
\newcommand{\concat}{{^\frown}}
\newcommand{\rest}{{\restriction}}
\newcommand{\sfrac}[2]{\hbox{$\frac{#1}{#2}$}}
\newcommand{\half}[1][1]{\sfrac{#1}{2}}
\DeclareMathOperator{\tp}{tp}
\newcommand{\Cb}{\mathrm{Cb}}
\DeclareMathOperator{\Th}{Th}
\DeclareMathOperator{\tS}{S}
\DeclareMathOperator{\dcl}{dcl}
\DeclareMathOperator{\acl}{acl}
\DeclareMathOperator{\SU}{SU}
\DeclareMathOperator{\id}{id}
\DeclareMathOperator{\Aut}{Aut}
\DeclareMathOperator{\Diag}{Diag}
\DeclareMathOperator{\Stab}{Stab}
\DeclareMathOperator{\med}{med}
\DeclareMathOperator{\sinf}{inf}
\newcommand{\fM}{\mathfrak{M}}
\newcommand{\cL}{\mathcal{L}}
\newcommand{\cM}{\mathcal{M}}
\newcommand{\cN}{\mathcal{N}}
\newcommand{\sA}{\mathscr{A}}
\newcommand{\bN}{\mathbb{N}}
\DeclareMathOperator{\nf}{nf}
\DeclareMathOperator{\tdcl}{tdcl}
\begin{document}

\title{Stability and stable groups in continuous logic}

\author{Itaï \textsc{Ben Yaacov}}

\address{Itaï \textsc{Ben Yaacov} \\
  Université Claude Bernard -- Lyon 1 \\
  Institut Camille Jordan, CNRS UMR 5208 \\
  43 boulevard du 11 novembre 1918 \\
  69622 Villeurbanne Cedex \\
  France}

\urladdr{\url{http://math.univ-lyon1.fr/~begnac/}}

\thanks{Author supported by
  ANR chaire d'excellence junior THEMODMET (ANR-06-CEXC-007) and
  by Marie Curie research network ModNet}

\svnInfo $Id: StabGrps.tex 986 2009-09-29 09:42:43Z begnac $
\thanks{\textit{Revision} {\svnInfoRevision} \textit{of} \today}

\keywords{stable theory ; continuous logic ; definable group}
\subjclass[2000]{03C45 ; 03C60 ; 03C90}

\begin{abstract}
  We develop several aspects of local and global stability in
  continuous first order logic.
  In particular, we study type-definable groups and genericity.
\end{abstract}

\maketitle

\section*{Introduction}

Continuous first order logic was introduced by A.\ Usvyatsov and the
author in \cite{BenYaacov-Usvyatsov:CFO}, with the declared purpose of
providing a setting in which classical local stability theory could be
developed for metric structures.
The actual development of stability theory there is fairly limited,
mostly restricted to the definability of $\varphi$-types for a stable
formula $\varphi$, the properties of
$\varphi$-independence, and in case the theory is stable,
properties of independence.
Many fundamental results of classical stability theory, and
specifically those related to stable groups, are missing there, and it
is this gap that the present article proposes to fill.

We assume familiarity with \cite{BenYaacov-Usvyatsov:CFO} and follow
the notation used therein.
Throughout $T$ denotes a continuous theory in a language $\cL$.
We do \emph{not} assume that $T$ is complete, so various constants,
such as $k(\varphi,\varepsilon)$ of \fref{fct:StabPhiTypeDef}, are
uniform across all completions of $T$ (provided that $\varphi$ is
stable in $T$, i.e., in every completion of $T$ separately).

By a \emph{model} we always mean a model of $T$.
Whenever this is convenient, we shall assume that such a model $\cM$
is embedded elementarily in a large monster model $\fM$, i.e., in a
strongly $\kappa$-homogeneous and saturated model, where $\kappa$ is
much bigger than the size of any set of parameters under
consideration.
Notice that we may not simply choose a single monster model for $T$,
as this would consist of choosing one completion.

\section{General reminders}

We shall consider throughout a formula $\varphi(\bar x,\bar y)$ whose
variables are split in two groups.
We recall from \cite{BenYaacov-Usvyatsov:CFO} that a
\emph{definable $\varphi$-predicate}
is a definable predicate $\psi(\bar x$), possibly
with parameters,
which is equivalent to an infinitary continuous combination of
instances of $\varphi$:
\begin{gather*}
  \psi(\bar x)
  \equiv
  \theta\bigl( \varphi(\bar x,\bar b_n) \bigr)_{n\in\bN},
  \qquad \theta\colon [0,1]^\bN \to [0,1] \text{ continuous}.
\end{gather*}
Equivalently, $\varphi(\bar x)$ is a $\varphi$-predicate if it can
be approximated arbitrarily well by finite continuous combinations of
instances of $\varphi$, possibly restricted to the use of the
connectives $\neg$, $\half$, $\dotminus$ alone.

Local types, i.e., $\varphi$-types for a fixed formula $\varphi$,
are discussed in \cite[Section~6]{BenYaacov-Usvyatsov:CFO}.
For a model $\cM$ and a tuple $\bar a$ in some extension
$\cN \succeq \cM$,
the \emph{$\varphi$-type} of $\bar a$ over $\cM$,
denoted $\tp_\varphi(\bar a/M)$, is the partial type given by
$\{
\varphi(\bar x,\bar b) = \varphi(\bar a,\bar b)
\}_{\bar b \in M}$.
The space of all $\varphi$-types over $M$ is denoted $\tS_\varphi(M)$,
and it is a compact Hausdorff quotient of $\tS_n(M)$.
If $\psi(\bar x)$ is a $\varphi$-predicate over $M$
then $\tp_\varphi(\bar a/M)$ determines $\psi(\bar a)$,
so we may identify $\psi$ with a mapping
$\hat \psi\colon \tS_\varphi(M) \to [0,1]$,
sending $p \mapsto \psi^p$.
Every such mapping is continuous, and conversely, every continuous
mapping from $\tS_\varphi(M)$ to $[0,1]$ is of this form.

For $A \subseteq M$ we define $\tS_\varphi(A)$ to be the quotient of
$\tS_\varphi(M)$ where two types are identified if all
$A$-definable $\varphi$-predicates agree on them.
This is again a compact Hausdorff space, a common quotient of
$\tS_\varphi(M)$ and of $\tS_k(A)$ (for the appropriate $k$),
and the continuous mappings $\tS_\varphi(A) \to [0,1]$ are precisely
the $A$-definable $\varphi$-predicates.
In particular this does not depend on the choice of $\cM$.

\begin{lem}
  \label{lem:CompactDef}
  Let $\cM$ be a structure, $K \subseteq M^\ell$ a (metrically) compact
  set and let $\varphi(\bar x,\bar y)$ be a formula
  (or a definable predicate, which we may always name by a new
  predicate symbol without adding any structure).
  Then $\inf_{\bar y\in K} \varphi(\bar x,\bar y)$ is a
  $\varphi$-predicate (with parameters in $K$)
  and for any tuple $\bar x$, the infimum
  is attained by some $\bar y \in K$.

  In particular, $K$ is definable in $\cM$.
\end{lem}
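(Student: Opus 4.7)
I will first show that the infimum is attained and that it defines a $\varphi$-predicate, then deduce definability of $K$.

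For the attainment, the formula $\varphi(\bar x, \cdot)$ is continuous on $M^\ell$ (every continuous formula is uniformly continuous in each variable separately), and $K$ is metrically compact, so $\inf_{\bar y \in K} \varphi(\bar x, \bar y)$ is achieved at some $\bar y \in K$ for any fixed tuple $\bar x$.

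The main point is that $\psi(\bar x) \eqdef \inf_{\bar y \in K} \varphi(\bar x, \bar y)$ is a $\varphi$-predicate. The idea is uniform approximation by finite minima over $\varepsilon$-nets. Fix $\varepsilon > 0$ and let $\delta > 0$ be a modulus of uniform continuity for $\varphi$ in $\bar y$ at scale $\varepsilon$, i.e., $d(\bar y, \bar y') < \delta$ implies $|\varphi(\bar x, \bar y) - \varphi(\bar x, \bar y')| < \varepsilon$ for all $\bar x$. By compactness of $K$, choose a finite $\delta$-net $\bar y_1, \ldots, \bar y_n \in K$. Then for any $\bar x$,
\begin{gather*}
  \min_{i \leq n} \varphi(\bar x, \bar y_i) - \varepsilon
  \leq \psi(\bar x)
  \leq \min_{i \leq n} \varphi(\bar x, \bar y_i),
\end{gather*}
since every $\bar y \in K$ is within $\delta$ of some $\bar y_i$. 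Because $\min(a,b) = a \dotminus (a \dotminus b)$, the finite minimum is a combination of instances of $\varphi$ using $\dotminus$ alone, so $\psi$ is uniformly approximated by $\varphi$-combinations with parameters in $K$, whence $\psi$ is a $\varphi$-predicate.

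For definability of $K$ in $\cM$, apply the first assertion to the distance predicate: taking $\varphi(\bar x, \bar y) = d(\bar x, \bar y)$ (which is a continuous formula in the home sort, and is handled by the remark about naming definable predicates by new symbols), the distance function $d(\bar x, K) = \inf_{\bar y \in K} d(\bar x, \bar y)$ is a definable predicate on $\cM$, and $K$ is its zero-set (closedness of $K$ following from compactness). This is precisely the criterion for $K$ to be a definable set in $\cM$.

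The only real step with content is the uniform approximation by finite minima, which I expect to be routine given the uniform continuity of $\varphi$ in $\bar y$ and the compactness of $K$; the rest is bookkeeping.
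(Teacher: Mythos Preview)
Your proof is correct and follows essentially the same approach as the paper: approximate the infimum by finite minima $\bigwedge_{i<n}\varphi(\bar x,\bar c_i)$ over a net in $K$, use uniform continuity of $\varphi$ in $\bar y$ together with compactness of $K$ to get uniform approximation, and invoke continuity plus compactness for attainment. The paper is terser (it fixes a single dense sequence $(\bar c_n)_n$ and uses its initial segments as nets, and it leaves the ``in particular'' clause implicit), whereas you spell out the modulus-of-continuity step and the application to $\varphi=d$ explicitly, but there is no substantive difference.
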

\begin{proof}
  Since $K$ is compact we can find a sequence
  $\{\bar c_n\}_{n\in\bN} \subseteq K$ such that for every
  $\varepsilon > 0$ there is $m = m(\varepsilon)$ such
  that
  $K \subseteq \bigcup_{n<m} B(\bar c_n,\varepsilon)$.
  Then $\sinf_{\bar y\in K} \varphi(\bar x,\bar y)$
  is arbitrarily well approximated by
  formulae of the form
  $\bigwedge_{n<m} \varphi(\bar x,\bar c_n)$ as $m \to \infty$.
  Finally, the infimum of a continuous function on a compact set is
  always attained.
\end{proof}

It will also be convenient to adopt the following somewhat non
standard terminology:
\begin{dfn}
  Let $\cM$ be a model, $A \subseteq M$ a subset.
  We say that $\cM$ is \emph{saturated over $A$}
  if it is strongly $(|A|+\aleph_0)^+$-homogeneous and saturated.
  (In fact, for all intents and purposes it will suffice to require
  $\cM$ to be strongly $\aleph_1$-homogeneous and saturated
  once every member of $A$ is named.)

  We say that a partial type $\pi(\bar x)$ over $\cM$
  is \emph{$A$-invariant} if
  $\cM$ is saturated over $A$
  and $\pi$ is fixed by the action of $\Aut(\cM/A)$.
\end{dfn}

An essential notion for the study of definability of types and
canonical bases in a stable theory is that of imaginary elements and
sorts.
Let us give a brief reminder of their construction, as given in
\cite{BenYaacov-Usvyatsov:CFO}.
Consider a definable predicate with parameters in some set $A$,
let us denote it by $\varphi(\bar x,A)$.
Then we may assume that $A$ is countable, say $A = (a_n)_{n\in\bN}$,
and express $\varphi(\bar x,A)$ as a uniform limit of formulae
$\varphi_n(\bar x,a_{<n})$.
Furthermore, using a forced limit argument, we may assume that the
sequence $\varphi_n(\bar x,y_{<n})$ converges uniformly to some
infinitary definable predicate $\varphi(\bar x,Y)$, giving sense to
$\varphi(\bar x,B)$ for any sequence $B = (b_n)_n$.
If $\cM$ is any structure, we equip $M^\bN$ with the pseudo-metric
$d_\varphi(B,C)
= \sup_{\bar x}\, |\varphi(\bar x,B)-\varphi(\bar x,C)|$
and define the
\emph{sort of canonical parameters for $\varphi$ in $\cM$},
denoted $S_\varphi^\cM$, as the complete metric space associated to
$(M^\bN,d_\varphi)$.
In other words, we divide $M^\bN$ by the kernel
$d_\varphi(Y,Z) = 0$, obtaining a true metric on the quotient, and
pass to the completion.
The predicate $\varphi(\bar x,Y)$ is uniformly continuous with respect
to $Y$ in the metric $d_\varphi$, and therefore passes first to the
quotient and then to the completion, thus inducing a uniformly
continuous predicate
$P_\varphi(\bar x,z)$, where $z \in S_\varphi$.
We may now add $(S_\varphi,d_\varphi)$ as a sort to $\cM$ and equip the new
structure with an additional predicate symbol for $P_\varphi$.
This does not add structure to the original sorts of $\cM$,
elementary embeddings of structures commute with this construction
(by which we mean, in particular, that an elementary embedding
$\cM \preceq \cN$ extends uniquely to
$(\cM,S_\varphi^\cM) \preceq (\cN,S_\varphi^\cN)$),
elementary classes and model completeness thereof are respected by
this construction, and so.
The construction can be slightly simplified
when $\varphi$ only uses finitely many parameters, e.g.,
if it is an honest formula rather than a definable predicate,
but we are going to need the general case.

If $c = [A]$ is the image of $A$ in $S_\varphi$ then $c$ is indeed a
canonical parameter for $\varphi(\bar x,A)$, in the sense that an
automorphism of $\cM$ or of an elementary extension thereof
(and such an automorphism extends uniquely to $S_\varphi$)
fixes $c$ if and only if it fixes the predicate
$\varphi(\bar x,A)$.
By construction we have $P_\varphi(\bar x,c) = \varphi(\bar x,A)$,
and by a convenient abuse of notation we shall permit ourselves to
write $\varphi(\bar x,c)$ instead of either one.

By an \emph{imaginary sort} we mean any sort added in this fashion,
and by \emph{imaginary elements} we mean members of such a sort.
We may repeat this construction for any other definable predicate
$\psi(\bar x',Y')$, or for any family of predicates.
A delicate point here is that even with a countable language one can
construct continuum many definable predicates for whose
canonical parameters imaginary sorts may be added.
For the purposes of stability theory, however, no more imaginary sorts
than the size of the language are truly required: we need the sort
$S_{d\varphi}$ for each formula $\varphi$
(see \fref{fct:StabPhiTypeDef} for the predicate $d\varphi$).
Therefore, with some intentional ambiguity, by $\cM^{eq}$ we shall
usually mean
``$\cM$ along with all the imaginary sorts we are going to need'',
e.g., all the sorts $S_{d\varphi}$.
By $\dcl^{eq}$, $\acl^{eq}$, etc., we mean the respective operations
in the structure $\cM^{eq}$.

\begin{lem}
  \label{lem:InvariantDefinable}
  Let $A$ be a set of parameters and let $\varphi(\bar x)$
  be a definable predicate with parameters possibly outside $A$.
  Then $\varphi$ is $A$-definable if and only if it is $A$-invariant,
  i.e., if and only if its canonical base belongs to $\dcl^{eq}(A)$.

  Similarly, a set which is definable (type-definable) with some
  parameters is definable (type-definable) over $A$ if and only it is
  $A$-invariant.
\end{lem}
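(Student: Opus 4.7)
The plan is to establish the three-way equivalence: (a)~$\varphi$ is $A$-definable, (b)~$\varphi$ is $A$-invariant, (c)~its canonical base $c \in S_\varphi$ lies in $\dcl^{eq}(A)$. First I would pass, if necessary, to a model in which all the relevant imaginary sorts are present and which is saturated over $A$; this is harmless since the construction of imaginary sorts commutes with elementary extensions, and automorphisms extend uniquely. The direction (a)$\Rightarrow$(b) is immediate, as an $A$-definable predicate is by definition a continuous function on $\tS(A)$, and $\Aut(\cM/A)$ acts trivially on types over $A$.

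The equivalence (b)$\Leftrightarrow$(c) goes through the very defining property of $c$ recalled earlier: an automorphism $\sigma$ fixes $c$ iff it fixes the predicate $\varphi$. Hence $\varphi$ is fixed by $\Aut(\cM/A)$ iff $c$ is. To finish, I invoke the standard model-theoretic characterization of $\dcl^{eq}$ in a sufficiently saturated and strongly homogeneous model: an imaginary element is in $\dcl^{eq}(A)$ iff its $\Aut(\cM/A)$-orbit is trivial. The non-trivial half uses strong homogeneity: if $c \notin \dcl^{eq}(A)$ then for some $\varepsilon>0$ and some $c' \neq c$ one has $d(c,c') \geq \varepsilon$ with $\tp(c'/A) = \tp(c/A)$, and strong homogeneity produces $\sigma \in \Aut(\cM/A)$ sending $c \mapsto c'$.

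The remaining direction (c)$\Rightarrow$(a) is where a small argument is needed. Assuming $c \in \dcl^{eq}(A)$, I must verify that the predicate $P_\varphi(\bar x, c)$ factors through $\tS_n(A)$. If $\tp(\bar a/A) = \tp(\bar b/A)$, strong homogeneity yields $\sigma \in \Aut(\cM/A)$ mapping $\bar a$ to $\bar b$; since $\sigma$ also fixes $c$, we get $P_\varphi(\bar a, c) = P_\varphi(\sigma \bar a, \sigma c) = P_\varphi(\bar b, c)$. The map $\tS_n(A) \to [0,1]$ so defined inherits continuity from $P_\varphi$, so it is precisely an $A$-definable predicate, by the characterization of $A$-definable $\varphi$-predicates (applied to a suitable $\varphi$).

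For the addendum on sets, I would argue topologically. A type-definable set $X$ over some parameters in $\cM$ is a closed subset of some $\tS_n(M)$; $A$-invariance means it is saturated under the equivalence relation ``same type over $A$'', i.e., $X$ is a union of fibres of the projection $\pi \colon \tS_n(M) \to \tS_n(A)$. Since $\pi$ is a continuous surjection between compact Hausdorff spaces, and so closed, $\pi(X)$ is closed in $\tS_n(A)$, hence type-definable over $A$, and $X = \pi^{-1}(\pi(X))$. For a definable set $X$, I would apply the predicate statement already proved to the $A$-invariant distance predicate $d(\bar x, X)$, which is an $A$-definable predicate iff $X$ is an $A$-definable set. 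The only genuinely delicate point in the whole plan is making sure that the equivalence ``fixed by $\Aut(\cM/A)$ iff in $\dcl^{eq}(A)$'' continues to work once one has enlarged $\cM$ by imaginary sorts, but this is precisely what is ensured by the exposition of the $S_\varphi$ construction preceding the lemma.
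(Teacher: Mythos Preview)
Your proposal is correct and covers all the claims in the statement. The paper's own proof takes a more compressed, purely topological route: it does not mention the canonical base or $\dcl^{eq}$ at all in the argument, treating the ``i.e.'' clause of the statement as a restatement rather than something to be proved separately. Instead it records the general fact that for a continuous surjection $\pi\colon X \to Y$ of compact Hausdorff spaces, $\pi$ is closed, and hence a map $f\colon Y \to [0,1]$ is continuous iff $f\circ\pi$ is; it then applies this to the restriction $\pi\colon \tS_n(B) \to \tS_n(A)$ (with $B \supseteq A$ containing all parameters), using that $A$-invariance amounts to constancy on the fibres of $\pi$. Your argument for type-definable sets via the closedness of $\pi$, and for definable sets via the predicate $d(\bar x,X)$, is exactly what the paper does.

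The one spot where you are slightly hand-wavy---``inherits continuity from $P_\varphi$''---is precisely the content of the paper's topological lemma, so the paper is crisper there. Conversely, your explicit treatment of the equivalence (b)$\Leftrightarrow$(c) via the defining property of the canonical parameter and the automorphism characterisation of $\dcl^{eq}$ spells out something the paper leaves to the reader; this is a reasonable trade-off, and both approaches rest on the same underlying facts (strong homogeneity over $A$ and compactness of type spaces).
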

\begin{proof}
  First, let us consider an arbitrary surjective continuous mapping
  $\pi\colon X \to Y$ between two compact Hausdorff topological spaces.
  Then $\pi$ is also closed, so $F \subseteq Y$ is closed if and only if
  $\pi^{-1}(F)$ is closed in $X$.
  Since $\pi$ is surjective, $U \subseteq Y$ is open if and only if
  $\pi^{-1}(U)$ is open, and a mapping
  $f\colon Y \to [0,1]$ is continuous if and only if $f \circ \pi$ is
  continuous.

  The assertion now follows from applying the previous paragraph to
  the restriction mapping $\tS_n(B) \to \tS_n(A)$,
  where $B \supseteq A$ contains all the needed parameters, using the
  correspondence between type-definable sets and closed sets, and
  between definable predicates and continuous functions.
  For a definable set $X$, just argue for the definable predicate
  $d(\bar x,X)$.
\end{proof}

\begin{fct}
  \label{fct:TransActLocalTypes}
  \cite[Lemma~6.8]{BenYaacov-Usvyatsov:CFO}
  Let $\varphi(\bar x,\bar y)$ be any formula, $A$ a set,
  $\cM$ a saturated model over $A$, and let $p \in \tS_\varphi(A)$.
  Then $\Aut(\cM/A)$ acts transitively on the set of extensions of $p$
  in $\tS_\varphi(\acl^{eq}(A))$.
\end{fct}

The following notion and fact also appear
(and are used much more extensively)
in \cite[Section~1]{BenYaacov:DefinabilityOfGroups}:
\begin{dfn}
  \label{dfn:LogNeighb}
  Let $X$ and $Y$ be two type-definable sets.
  We say that $Y$ is a \emph{logical neighbourhood} of $X$, in symbols
  $X < Y$, if there is a set of parameters $A$ over which both $X$ and
  $Y$ are defined such that $[X] \subseteq [Y]^\circ$ in $\tS_n(A)$.
\end{dfn}

Notice that the interior of $[Y]$ does depend on $A$
(i.e., if $A' \supseteq A$ then $[Y]^\circ$
calculated in $\tS_n(A')$ may be larger than the
pullback of the interior of $[Y]$ in $\tS_n(A)$).
We may nonetheless choose any parameter set we wish:

\begin{lem}
  \label{lem:LogNeighb}
  Assume that $X$ is type-definable with parameters in $B$, $Y$
  type-definable possibly with additional parameters not in $B$.
  Then:
  \begin{enumerate}
  \item If $X < Y$ then $[X] \subseteq [Y]^\circ$ in $\tS_n(A)$
    for any set $A$ over which both
    $X$ and $Y$ are defined.
  \item If $X < Y$ then there is an intermediate logical neighbourhood
    $X < Z < Y$, which can moreover be taken to be the zero set of a formula
    with parameters in $B$.
  \item If $Y\cap X = \emptyset$ then there is a logical neighbourhood
    $Z > X$ such that $Z \cap Y = \emptyset$.
    Moreover, we may take $Z$ to be a zero set defined over $B$.
  \end{enumerate}
\end{lem}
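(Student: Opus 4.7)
My plan is to prove (i) first, since it lets us freely change parameter sets in the definition of $<$, and then to handle (ii) and (iii) uniformly by a Urysohn-style separation carried out over the type space $\tS_n(B)$. The main obstacle will be the descent step in (i), where one passes from a larger parameter set to a smaller one: the restriction map between type spaces is a continuous surjection but does not preserve interiors in the naive direction, and must be handled by the standard compactness trick of saturating the open set into a union of fibres.

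For (i), suppose $X<Y$ is witnessed by a parameter set $A_0$ and let $A$ be any other set defining both $X$ and $Y$. I factor through $A\cup A_0$. Going from $A_0$ up to $A\cup A_0$ is immediate via preimages: if $r$ denotes the restriction to $\tS_n(A_0)$, then $r^{-1}([Y]^\circ_{A_0})$ is an open set sandwiched between $[X]_{A\cup A_0}=r^{-1}([X]_{A_0})$ and $[Y]_{A\cup A_0}=r^{-1}([Y]_{A_0})$. Going from $A\cup A_0$ down to $A$: with $r\colon\tS_n(A\cup A_0)\to\tS_n(A)$ and $V=[Y]^\circ_{A\cup A_0}$, set $U:=\{q\in\tS_n(A):r^{-1}(q)\subseteq V\}=\tS_n(A)\setminus r(\tS_n(A\cup A_0)\setminus V)$. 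Compactness of the removed set makes $U$ open, and a fibre argument shows $[X]_A\subseteq U\subseteq[Y]_A$, so $[X]_A\subseteq[Y]^\circ_A$.

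For (ii) and (iii) fix $A\supseteq B$ defining both $X$ and $Y$ and let $\pi\colon\tS_n(A)\to\tS_n(B)$ be the restriction. By (i), the hypotheses translate to $[X]_A\subseteq[Y]^\circ_A$ in case (ii) and $[X]_A\cap[Y]_A=\emptyset$ in case (iii); meanwhile $B$-invariance of $X$ together with \fref{lem:InvariantDefinable} gives $[X]_A=\pi^{-1}([X]_B)$. Re-applying the compactness trick from (i), I set $V_B:=\{q\in\tS_n(B):\pi^{-1}(q)\subseteq[Y]^\circ_A\}$ for (ii) and $W_B:=\tS_n(B)\setminus\pi([Y]_A)$ for (iii); both are open in $\tS_n(B)$ (the second because $\pi([Y]_A)$ is compact hence closed), and a fibre argument shows $[X]_B\subseteq V_B$ and $[X]_B\subseteq W_B$ respectively.

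Now Urysohn's lemma in the compact Hausdorff space $\tS_n(B)$ yields a continuous $f\colon\tS_n(B)\to[0,1]$ vanishing on $[X]_B$ and equal to $1$ off the chosen open neighbourhood; since continuous functions on $\tS_n(B)$ correspond to $B$-definable predicates, this produces $\psi$ with parameters in $B$, and I take $Z=\{\bar x:\psi(\bar x)\leq\half\}$, a zero set defined over $B$. The inclusion $X<Z$ is immediate from $\psi|_{[X]_B}=0$. For (ii), if $p\in[Z]_A$ then $\psi(\pi p)\leq\half<1$, so $\pi p\in V_B$ and hence $p\in\pi^{-1}(V_B)\subseteq[Y]^\circ_A$, giving $Z<Y$. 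For (iii), any $p\in[Z]_A\cap[Y]_A$ would satisfy $\psi(\pi p)\leq\half$ while simultaneously $\pi p\in\pi([Y]_A)=\tS_n(B)\setminus W_B$, where $\psi=1$, a contradiction.
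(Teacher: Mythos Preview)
Your argument is correct and takes a more uniformly topological route than the paper. Where you push the relevant open neighbourhood down to $\tS_n(B)$ via the fibre-saturation $q\mapsto\{q:\pi^{-1}(q)\subseteq V\}$ and then invoke Urysohn, the paper works syntactically: it notes that $X$ is axiomatised by the downward-directed family $\{\varphi\leq r:\varphi\in\Phi,\ r>0\}$, where $\Phi$ is the set of $B$-formulae vanishing on $X$, applies compactness in $\tS_n(A)$ to extract a single $\varphi\leq r$ whose zero set already lies inside $[Y]$, and sets $Z=[\varphi\leq r']$ for some dyadic $r'<r$. This handles (i) and (ii) in one stroke (the intermediate $Z$, being defined by a $B$-formula, witnesses $[X]\subseteq[Y]^\circ$ over any parameter set); for (iii) the paper simply quotes normality of $\tS_n(A)$ to find a type-definable neighbourhood of $X$ disjoint from $Y$ and feeds it back through (ii). Your explicit treatment of the descent step in (i) is cleaner than the paper's, which leaves it largely implicit.

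One small point to patch: the lemma promises $Z$ as the zero set of a \emph{formula} over $B$, whereas Urysohn only hands you a $B$-definable predicate $\psi$. Approximate $\psi$ by a $B$-formula $\varphi$ with $\sup|\psi-\varphi|<\tfrac14$ and take $Z=[\varphi\dotminus\half=0]$ instead; your inclusions survive with the obvious margin. The paper's compactness extraction sidesteps this by producing an honest formula immediately.
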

\begin{proof}
  Assume $X < Y$, where $X$ is type-definable over $B$, and $Y$ over
  $A \supseteq B$.
  Let $\Phi$ consist of all formulae $\varphi(\bar x)$ over $B$ which
  are zero on $X$.
  If $\varphi,\psi \in \Phi$ then $\varphi\vee\psi \in \Phi$,
  and $X$ is defined by the partial type
  $p(\bar x)
  = \{\varphi(\bar x) \leq r\colon \varphi \in \Phi, r > 0\}$.
  By compactness in $\tS_n(A)$
  there is a condition $\varphi(\bar x) \leq r$ in $p(\bar x)$
  which already implies $\bar x \in Y$.
  Let $Z$ be the zero set of
  the formula $\varphi(\bar x) \dotminus r'$
  where $0 < r' = \frac{k}{2^{-m}} < r$.

  Then in $\tS_n(A)$ we have
  $[X] \subseteq [\varphi(\bar x) < r']
  \subseteq [\varphi(\bar x) \leq r']
  \subseteq [\varphi(\bar r) < r]
  \subseteq [Y]$,
  i.e.,
  $[X] \subseteq [Z]^\circ \subseteq [Z] \subseteq [Y]^\circ$,
  proving the first two items.
  The third item now follows from the fact that $\tS_n(A)$ is a normal
  topological space.
\end{proof}

\section{Definability and forking of local types}

Having fixed a theory $T$, we shall call here a formula
$\varphi(\bar x,\bar y)$ \emph{stable} if it is stable in $T$,
that is, if it does not have the order property in any model of $T$.
The order property was defined for continuous logic in
\cite{BenYaacov-Usvyatsov:CFO}, but the reader may simply use
\fref{fct:StabPhiTypeDef} below as the definition of a stable formula.

Let us introduce some convenient notation.
If $\varphi(\bar x,\bar y)$ is any formula with two groups of variables,
$\tilde \varphi(\bar y,\bar x)$ denotes the same formula with the groups of
variables interchanged.
More generally, let us define
\begin{gather*}
  \tilde \varphi^n(\bar y,\bar x_{\leq 2n})
  = \med_n\bigl( \varphi(\bar x_i,\bar y) \bigr)_{i \leq 2n},
\end{gather*}
where $\med_n\colon[0,1]^{2n+1} \to [0,1]$ is the median value
combination:
\begin{gather*}
  \med_n(t_{\leq 2n})
  = \bigwedge_{w \in [2n+1]^{n+1}} \, \bigvee_{i \in w} \, t_i
  = \bigvee_{w \in [2n+1]^{n+1}} \, \bigwedge_{i \in w} \, t_i.
\end{gather*}
Thus in particular $\tilde \varphi^0 = \tilde \varphi$ and every
instance of $\tilde \varphi^n$ is a $\tilde \varphi$-predicate.

\begin{fct}
  \label{fct:StabPhiTypeDef}
  Let $\varphi(\bar x,\bar y)$ be a stable formula.
  Let $\cM$ be a model and let $p \in \tS_\varphi(\cM)$ be a complete
  $\varphi$-type.
  Then
  \begin{enumerate}
  \item The type $p$ is definable over $M$, i.e.,
    there exists an $M$-definable
    $\tilde \varphi$-predicate $d_p\varphi(\bar y)$
    such that
    $\varphi(x,\bar b)^p = d_p\varphi(\bar b)$
    for all $\bar b \in M$.
    Moreover, this definition is uniform, in the sense that
    there exists an infinitary definable predicate
    $d\varphi(\bar y,X)$ which only depends on $\varphi$
    such that $d_p\varphi(\bar y)$
    is equal to an instance $d\varphi(\bar y,C)$ where
    $C \subseteq M$.
  \item For every $\varepsilon > 0$ there exists a number
    $k = k(\varphi,\varepsilon) \in \bN$
    (which depends on $\varphi$ and on $\varepsilon$ but not on $p$)
    and a tuple
    $\bar c_{\leq 2k}
    = \bar c^\varepsilon_{\leq 2k(\varphi,\varepsilon)}$
    in $\cM$
    (which does depend on $p$)
    such that
    \begin{gather*}
      |d_p\varphi(\bar y)-\tilde \varphi^k(\bar y,\bar c_{\leq 2k})|
      < \varepsilon.
    \end{gather*}
  \item
    Assume moreover that $\cM$ is saturated over some subset
    $A \subseteq M$.
    Then in the previous item the tuples
    $\bar c_{\leq 2k}$ can be chosen so that
    each $\bar c_n$ realises $p \rest_{A\bar c_{<n}}$.
  \end{enumerate}
\end{fct}
\begin{proof}
  The first two items are taken from
  \cite[Lemma~7.4 and Proposition~7.6]{BenYaacov-Usvyatsov:CFO}.
  The third item, while not explicitly stated there,
  is immediate from the proof.
\end{proof}

By abuse of notation we may sometimes write
$d_p\varphi(\bar y) = d\varphi(\bar y,c)$, where $c \in M^{eq}$ is
the canonical parameter for the definition.
This canonical parameter is called the \emph{canonical base} of $p$,
denoted $\Cb(p)$.


We recall that for $A \subseteq B \subseteq \cM$,
$p \in \tS_\varphi(B)$ \emph{does not fork} over $A$
if it admits an extension $p_1 \in \tS_\varphi(M)$
which is definable over $\acl^{eq}(A)$.
In this case $p_1$ itself does not fork over $A$ or $B$.
A type over a model clearly admits a unique non forking extension to
any larger model (and therefore set), so this definition does not
depend on the choice of $\cM$.

We proved in \cite[Proposition~7.15]{BenYaacov-Usvyatsov:CFO}
that every $\varphi$-type over a set $A$ admits a non forking extension to
every model (and therefore every set) containing $A$.
A minor enhancement of that result will be quite useful.

\begin{lem}[Existence of non forking extensions]
  \label{lem:NFExtExist}
  Let $\varphi(\bar x,\bar y)$ be a stable formula, $A$ a set,
  $\cM \supseteq A$ a saturated model over $A$.
  Let $\pi(\bar x)$ be a consistent $A$-invariant partial type over $M$.
  Then there exists $p \in \tS_\varphi(M)$ compatible with $\pi$
  which does not fork over $A$.
\end{lem}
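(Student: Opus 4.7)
My plan is to obtain $p$ as the limit type of a Morley-like sequence $(\bar c_n)$ in $\cM$ whose elements all realise $\pi$, so that compatibility of $p$ with $\pi$ will follow from finite satisfiability of $p$ in the sequence.

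First, I would apply \fref{lem:InvariantDefinable} to the type-definable set of realisations of the $A$-invariant partial type $\pi$ (in a common elementary extension of $\cM$): that set is type-definable over $A$, so without loss of generality $\pi$ is already a partial type over $A$. Fix now $\bar a \models \pi$ in some elementary extension of $\cM$ and set $r = \tp(\bar a/A)$, $q = \tp_\varphi(\bar a/A)$.

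The main step would be to build inductively $\bar c_0, \bar c_1, \ldots \in \cM$ satisfying $\tp(\bar c_n/A) = r$ (so that automatically $\bar c_n \models \pi$) and with $\tp_\varphi(\bar c_n/A\bar c_{<n})$ a non-forking extension of $q$ over $A$. The base case is given by saturation of $\cM$ over $A$. At the inductive step, I would invoke \cite[Proposition~7.15]{BenYaacov-Usvyatsov:CFO} to produce some non-forking $\varphi$-extension of $q$ to $A\bar c_{<n}$, realised in $\cM$ by some $\bar d$ (using saturation of $\cM$ over the finite enlargement $A\bar c_{<n}$ of $A$). The difficulty --- and the main obstacle --- is that $\tp(\bar d/A)$ need not equal $r$; I would correct for this by applying \fref{fct:TransActLocalTypes} to the two extensions of $q$ in $\tS_\varphi(\acl^{eq}(A))$ induced by $\bar d$ and by $\bar a$ respectively, and then using the strong $A$-homogeneity of $\cM$, so as to replace $\bar d$ by a tuple with the required full $A$-type $r$ while preserving the non-forking character of its $\varphi$-type over $A\bar c_{<n}$.

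Given such a sequence, I would apply \fref{fct:StabPhiTypeDef} to obtain the definable $\varphi$-predicate $d_p\varphi(\bar y)$ as a uniform limit of the combinations $\tilde\varphi^k(\bar y, \bar c_{\leq 2k})$; this defines the desired $\varphi$-type $p \in \tS_\varphi(M)$, not forking over $A$. For every finite $p_0 \subseteq p$, every finite $\pi_0 \subseteq \pi$ and every $\varepsilon > 0$, sufficiently large $\bar c_n$ will satisfy $p_0$ to within $\varepsilon$ while satisfying $\pi_0$ exactly, so $p \cup \pi$ is finitely satisfiable and hence consistent by compactness in $\tS_n(M)$; thus $p$ is the desired non-forking $\varphi$-type compatible with $\pi$.
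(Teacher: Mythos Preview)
Your inductive step has a genuine gap. You obtain $\bar d$ whose $\varphi$-type over $A\bar c_{<n}$ does not fork over $A$, and you then wish to replace $\bar d$ by a tuple realising the full type $r$ while keeping its $\varphi$-type over $A\bar c_{<n}$ non-forking. But the automorphism supplied by \fref{fct:TransActLocalTypes} fixes $A$ pointwise and has no reason to fix $\bar c_{<n}$: applying it sends $\bar c_{<n}$ to some conjugate $\bar c'_{<n}$, so the resulting tuple has a non-forking $\varphi$-type over $A\bar c'_{<n}$, not over $A\bar c_{<n}$. Conversely, knowing that two tuples share the same $\varphi$-type over $\acl^{eq}(A)$ says nothing about their $\varphi$-types over the unrelated set $A\bar c_{<n}$. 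So neither direction of the correction works.

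What you actually need at this step is precisely that the complete type $r \in \tS_n(A)$ is compatible with some non-forking extension of $q = r\rest_\varphi$ to $A\bar c_{<n}$. In the paper this is \fref{cor:CompatNFExt}, and its proof invokes \fref{lem:NFExtExist} itself; so as written your argument is circular. (There is also a secondary issue: even granting the sequence, you have not fixed a single stationary $\varphi$-type over $\acl^{eq}(A)$ for the $\bar c_n$ to follow, so the convergence of the median combinations $\tilde\varphi^k(\bar y,\bar c_{\leq 2k})$ is not guaranteed by \fref{fct:StabPhiTypeDef} alone.)

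For comparison, the paper's proof is quite different and much shorter: it considers the closed, non-empty, $A$-invariant set $X = \{p \in \tS_\varphi(M)\colon p\cup\pi\text{ is consistent}\}$ and applies the $A$-good-set machinery of \cite[Lemmas~7.13--7.14]{BenYaacov-Usvyatsov:CFO} directly to $X$ to extract a member definable over $\acl^{eq}(A)$.
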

\begin{proof}
  Let
  $X = \{p \in \tS_\varphi(M)\colon p\cup \pi \text{ is consistent}\}$.
  Then $X$ is non empty and $A$-invariant.
  By \cite[Lemma~7.14]{BenYaacov-Usvyatsov:CFO},
  there is $Y \subseteq X$ which is $A$-good,
  i.e., which is $A$-invariant and metrically compact.
  By \cite[Lemma~7.13]{BenYaacov-Usvyatsov:CFO}, any $p \in Y$ would do.
\end{proof}

\begin{cor}
  \label{cor:NonForkingInvariant}
  Let $\varphi(\bar x,\bar y)$ be a stable formula, $A$ a set,
  $\cM \supseteq A$ a saturated model over $A$.
  Then $p \in \tS_\varphi(M)$ does not fork over $A$ if and only if it
  is $\acl^{eq}(A)$-invariant.
\end{cor}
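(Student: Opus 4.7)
The statement is essentially a characterisation of nonforking in terms of invariance, and both directions are short applications of what has already been set up.

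For the forward direction, assume $p\in\tS_\varphi(M)$ does not fork over $A$. Because $p$ lives over the saturated model $\cM$, a nonforking extension of $p \rest_{\acl^{eq}(A)}$ must coincide with $p$ itself (uniqueness of nonforking extensions to a model, as recalled just after \fref{fct:StabPhiTypeDef}). Hence $p$ is $\acl^{eq}(A)$-definable. By \fref{lem:InvariantDefinable} applied to the defining $\tilde\varphi$-predicates (or equivalently, because $\Cb(p)\in\acl^{eq}(A)$), the $\varphi$-predicates $\varphi(\bar a,\bar y)$ for $\bar a \models p$ are fixed under $\Aut(\cM/\acl^{eq}(A))$, which means $p$ itself is fixed.

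For the reverse direction, suppose $p$ is $\acl^{eq}(A)$-invariant; here I take $\cM$ large enough to be saturated over $\acl^{eq}(A)$ (this costs nothing, as $|\acl^{eq}(A)|$ is bounded by $|A|+|T|+\aleph_0$ and one may always pass to a bigger monster model). Apply \fref{lem:NFExtExist} with base set $\acl^{eq}(A)$ and partial type $\pi = p$: this is an $\acl^{eq}(A)$-invariant consistent partial type over $M$, so the lemma produces some $p'\in\tS_\varphi(M)$ compatible with $p$ and not forking over $\acl^{eq}(A)$. Since $p$ is a complete $\varphi$-type, $p'=p$, so $p$ is definable over $\acl^{eq}(\acl^{eq}(A)) = \acl^{eq}(A)$, hence does not fork over $A$.

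There is no real obstacle; the only point requiring a word of justification is the implicit enlargement of $\cM$ so that it is also saturated over $\acl^{eq}(A)$, which is harmless under the standing conventions on monster models.
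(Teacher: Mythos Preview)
Your proof is correct and follows the same route as the paper, which simply says that left to right is ``from the definition'' and right to left is ``from \fref{lem:NFExtExist}.'' Your forward direction is slightly over-argued: for $p \in \tS_\varphi(M)$, nonforking over $A$ \emph{means by definition} that $p$ is definable over $\acl^{eq}(A)$, so invariance is immediate (via $\Cb(p) \in \acl^{eq}(A)$); the detour through uniqueness of nonforking extensions and the phrase about ``the $\varphi$-predicates $\varphi(\bar a,\bar y)$ for $\bar a \models p$'' is unnecessary and a bit garbled --- what is invariant is the definition $d_p\varphi(\bar y)$, not those instances.
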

\begin{proof}
  Left to right follows from the definition, right to left from
  \fref{lem:NFExtExist}.
\end{proof}

\begin{cor}
  \label{cor:CompleteNFExt}
  Let $A$ be a set, $\cM \supseteq A$ a saturated model over $A$ and
  $\pi(\bar x)$ a consistent $A$-invariant partial type over $M$.
  Then there exists a complete type $p$ such that
  $\pi \subseteq p \in \tS_n(M)$,
  and for every stable formula $\varphi(\bar x,\bar y)$ the
  restriction $p\rest_\varphi \in \tS_\varphi(M)$ does not fork over
  $A$.
\end{cor}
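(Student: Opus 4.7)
The plan is to apply compactness of $\tS_n(M)$ to the family of closed subsets
\[
  X_\varphi = \bigl\{p \in \tS_n(M) : \pi \subseteq p \text{ and } p\rest_\varphi \text{ does not fork over } A\bigr\},
\]
indexed by stable formulas $\varphi(\bar x,\bar y)$, and extract any point of the intersection. I would first verify that $X_\varphi$ is closed: by \fref{cor:NonForkingInvariant} the non-forking $\varphi$-types over $A$ are exactly the $\acl^{eq}(A)$-invariant ones, and the $\acl^{eq}(A)$-invariant points form the intersection over $\sigma \in \Aut(\cM/\acl^{eq}(A))$ of the closed fixed-point sets $\{q : \sigma q = q\} \subseteq \tS_\varphi(M)$; pulling back along the continuous projection $\tS_n(M) \to \tS_\varphi(M)$ and intersecting with the closed set $[\pi]$ shows $X_\varphi$ is closed.

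The substantive step is the finite intersection property. Given finitely many stable formulas $\varphi_1,\dots,\varphi_n$, I would apply \fref{lem:NFExtExist} inductively: set $\pi_0 = \pi$, and at stage $i+1$ use the $A$-invariance of $\pi_i$ to produce $q_{i+1} \in \tS_{\varphi_{i+1}}(M)$ compatible with $\pi_i$ and non-forking over $A$; then set $\pi_{i+1} = \pi_i \cup q_{i+1}$. Any complete type in $\tS_n(M)$ extending $\pi_n$ then belongs to $X_{\varphi_1} \cap \cdots \cap X_{\varphi_n}$.

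The one point of care — and the only genuine obstacle — is maintaining $A$-invariance of $\pi_i$ across the induction, so that \fref{lem:NFExtExist} remains applicable. This is ensured by \fref{cor:NonForkingInvariant}: each $q_j$, being non-forking over $A$, is $\acl^{eq}(A)$-invariant, and hence \emph{a fortiori} $A$-invariant, so the union $\pi_i$ of $A$-invariant partial types is itself $A$-invariant; consistency is handed to us by the compatibility clause of \fref{lem:NFExtExist}. Once this is in place, compactness of $\tS_n(M)$ closes the argument, and any $p \in \bigcap_\varphi X_\varphi$ satisfies the conclusion.
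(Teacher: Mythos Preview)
Your inductive construction for the finite intersection property is exactly the paper's argument (the paper simply runs the same induction transfinitely over all stable formulae rather than invoking compactness, but this is cosmetic). The closedness of $X_\varphi$ is also fine, and indeed follows more directly from the last item of \fref{prp:NFExtDef}.

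There is, however, a genuine slip in the invariance step. You write that each $q_j$, being non-forking over $A$, is $\acl^{eq}(A)$-invariant ``and hence \emph{a fortiori} $A$-invariant''. The implication goes the other way: since $\Aut(\cM/\acl^{eq}(A)) \subseteq \Aut(\cM/A)$, being fixed by the larger group $\Aut(\cM/A)$ is the \emph{stronger} condition, so $A$-invariance implies $\acl^{eq}(A)$-invariance, not conversely. A non-forking $\varphi$-type over $A$ is definable over $\acl^{eq}(A)$ and hence $\acl^{eq}(A)$-invariant, but an automorphism fixing $A$ may move elements of $\acl^{eq}(A)$ and thus move $q_j$. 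As written, then, you cannot reapply \fref{lem:NFExtExist} at the next stage.

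The fix is the one the paper makes at the outset: replace $A$ by $\acl^{eq}(A)$. The hypothesis that $\pi$ is $A$-invariant implies it is $\acl^{eq}(A)$-invariant, and non-forking over $A$ coincides with non-forking over $\acl^{eq}(A)$, so nothing is lost. With $A = \acl^{eq}(A)$ the two notions of invariance collapse and your induction goes through verbatim.
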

\begin{proof}
  We may assume that $A = \acl^{eq}(A)$.
  Index all stable formulae of the form
  $\varphi_i(\bar x,\bar y_i)$ by $i < \lambda$.
  We define an increasing sequence of consistent $A$-invariant partial
  types $\pi_i$ over $M$, starting with $\pi_0 = \pi$.
  Given $\pi_i$, by \fref{lem:NFExtExist} there is
  $p_i \in \tS_{\varphi_i}(M)$ be non forking over $A$ and compatible
  with $\pi_i$, so $\pi_{i+1} = \pi_i \cup p_i$ is consistent and
  $A$-invariant.
  For limit $i$ we define $\pi_i = \bigcup_{j<i} \pi_j$.
  Finally, let $p \in \tS_n(M)$ be any completion of $\pi_\lambda$.
  Then $p$ will do.
\end{proof}

It follows that if the theory is stable then every complete type over
a set admits non forking extensions.
The same fact was proved in \cite{BenYaacov-Usvyatsov:CFO} using a
somewhat longer ``gluing'' argument.

\begin{fct}[Symmetry
  {\cite[Proposition~7.16]{BenYaacov-Usvyatsov:CFO}}]
  \label{fct:NFSymmetry}
  Let $\cM$ be a model,
  $p(\bar x) \in \tS_\varphi(M)$,
  $q(\bar y) \in \tS_{\tilde \varphi}(M)$.
  Then $d_p\varphi(\bar y)^q = d_q\tilde \varphi(\bar x)^p$.
\end{fct}

\begin{prp}
  \label{prp:NFExtDef}
  Let $\varphi(\bar x,\bar y)$ be a stable formula,
  $\cM$ a model, $A \subseteq M$.
  For each $\bar b \in M$ let
  $\chi_{\bar b}(\bar x)$ be the definition of a non forking
  extension of
  $\tp_{\tilde \varphi}(\bar b/\acl^{eq}(A))$ to $M$.
  \begin{enumerate}
  \item Each $\chi_{\bar b}(\bar x)$ is a definable
    $\varphi$-predicate over $\acl^{eq}(A)$.
  \item
    A $\varphi$-type $p \in \tS_\varphi(M)$ does not fork over $A$
    if and only if $\varphi(\bar x,\bar b)^p = \chi_{\bar b}(\bar x)^p$
    for all $\bar b \in M$.
  \item \label{item:NFExtDefStationary}
    A $\varphi$-type over $\acl^{eq}(A)$ is \emph{stationary}, i.e.,
    admits a unique non forking extension to every larger set.
  \item Let
    $r(\bar x) = \big\{
    |\varphi(\bar x,\bar b)-\chi_{\bar b}(\bar x)| = 0
    \big\}_{\bar b \in M}$.
    Then the partial type $r(\bar x)$ defines the set of $\varphi$-types which
    do not fork over $A$:
    \begin{gather*}
      \bar a \models r
      \quad \Longleftrightarrow \quad
      \tp_\varphi(\bar a/M) \text{ does not fork over } A.
    \end{gather*}
  \item For every $B \supseteq A$, the set
    $\{p \in \tS_\varphi(B)\colon p \text{ does not fork over } A\}$ is closed.
  \end{enumerate}
\end{prp}
\begin{proof}
  The first item is by \fref{fct:StabPhiTypeDef} and the definition of
  non forking.

  For the second, fix $\bar b \in M$, let
  $q_0 = \tp_{\tilde \varphi}(\bar b/\acl^{eq}(A))$ and let
  $q \in \tS_\varphi(M)$ be the non forking extension defined by
  $\chi_{\bar b}$.
  Assume $p \in \tS_\varphi(M)$ does not fork over $M$, so
  $d_p\varphi(\bar y)$ is a $\tilde \varphi$-predicate over
  $\acl^{eq}(A)$.
  By \fref{fct:NFSymmetry},
  \begin{gather*}
    \varphi(\bar x,\bar b)^p
    = d_p\varphi(\bar b)
    = d_p\varphi(\bar y)^{q_0}
    = d_p\varphi(\bar y)^q
    = d_q\tilde \varphi(\bar x)^p
    = \chi_{\bar b}(\bar x)^p.
  \end{gather*}
  Conversely, assume that
  $\varphi(\bar x,\bar b)^p
  = \chi_{\bar b}(\bar x)^p$ for all $\bar b \in M$,
  and let $p' \in \tS_\varphi(M)$ be any non forking extension
  of $p\rest_{\acl^{eq}(A)}$.
  Then $p = p'$, proving also the third item.
  The fourth item is just a re-statement of the second.

  For the last item we may assume that $B \subseteq M$.
  The set $[r] \subseteq \tS_\varphi(M)$ is closed, and so is its projection to
  $\tS_\varphi(B)$.
  This projection is precisely the
  set of types which do not fork over $A$.
\end{proof}

\fref{prp:NFExtDef}.\fref{item:NFExtDefStationary} is
the analogue of the finite equivalence relation theorem in continuous
logic.
It has already appeared as
\cite[Proposition~7.17]{BenYaacov-Usvyatsov:CFO}.
In case $p \in \tS_\varphi(A)$ is stationary,
the unique non forking extension to $B \supseteq A$
will be denoted $p\rest^B$.
Similarly, we write $d_p\varphi$ for the definition of $p\rest^M$
where $\cM \supseteq A$ is any model (and this does not depend on the
choice of $\cM$).
Thus, in hindsight, in the statement of \fref{prp:NFExtDef},
the definitions $\chi_{\bar b}$ are uniquely determined,
$\chi_{\bar b} = d_{\bar b/\acl^{eq}(A)}\tilde \varphi$.

\begin{cor}
  \label{cor:TransActNonForking}
  Let $\varphi(\bar x,\bar y)$ be a stable formula, $A$ a set,
  $\cM$ a saturated model over $A$.
  Let $p \in \tS_\varphi(A)$.
  Then $\Aut(\cM/A)$ acts transitively on
  the set of non forking extensions of $p$ in $\tS_\varphi(M)$.
  If $T$ is stable and $p \in \tS_n(A)$
  then $\Aut(\cM/A)$ acts transitively on the set of non forking
  extensions of $p$ to $\cM$.
\end{cor}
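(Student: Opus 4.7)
The plan is to reduce both statements to a combination of stationarity (\fref{prp:NFExtDef}.\fref{item:NFExtDefStationary}) with the transitive action of $\Aut(\cM/A)$ on extensions of $p$ to $\acl^{eq}(A)$. For the $\varphi$-type version, let $p_1, p_2 \in \tS_\varphi(M)$ be two non forking extensions of $p$ and set $p_i' = p_i \rest_{\acl^{eq}(A)}$. Both lie in $\tS_\varphi(\acl^{eq}(A))$ and extend $p$, so by \fref{fct:TransActLocalTypes} there is $\sigma \in \Aut(\cM/A)$ with $\sigma(p_1') = p_2'$. Since non forking is automorphism-invariant, $\sigma(p_1) \in \tS_\varphi(M)$ is a non forking extension of $p_2'$; by stationarity of $p_2'$ such an extension is unique, whence $\sigma(p_1) = p_2$.

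For the complete-type version under the assumption that $T$ is stable, I would first upgrade the two ingredients from the local to the global setting. Stationarity: every $p' \in \tS_n(\acl^{eq}(A))$ has a unique non forking extension to $\cM$, because a complete type is determined by its $\varphi$-restrictions and each $p'\rest_\varphi$ is stationary by \fref{prp:NFExtDef}.\fref{item:NFExtDefStationary}. Transitivity: given $p_1', p_2' \in \tS_n(\acl^{eq}(A))$ both extending $p \in \tS_n(A)$, realize $\bar a_i \models p_i'$ in $\cM$ (possible by saturation, since $\acl^{eq}(A)$ is bounded over $A$). Since $\tp(\bar a_1/A) = p = \tp(\bar a_2/A)$, strong homogeneity of $\cM$ over $A$ yields $\sigma \in \Aut(\cM/A)$ with $\sigma(\bar a_1) = \bar a_2$; this $\sigma$ permutes $\acl^{eq}(A)$ setwise and satisfies $\sigma(p_1') = p_2'$. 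Now conclude exactly as in the $\varphi$-type case.

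The main nuisance I foresee is purely bookkeeping: confirming that the paper's convention ``saturated over $A$'' is strong enough to realize complete $n$-types over $\acl^{eq}(A)$ (at worst a harmless size-bump, since $|\acl^{eq}(A)| \le |T| + |A| + \aleph_0$), and noting explicitly that a complete type is determined by its family of $\varphi$-restrictions. There is no substantive obstacle; the whole argument is essentially two applications of stationarity combined with the local-level transitive action of $\Aut(\cM/A)$ on types over $\acl^{eq}(A)$.
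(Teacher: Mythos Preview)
Your proposal is correct and follows exactly the paper's approach: the paper's proof is a terse two-sentence pointer to \fref{fct:TransActLocalTypes} plus \fref{prp:NFExtDef}.\fref{item:NFExtDefStationary} for the local case, and to the ``even easier fact'' that $\Aut(\cM/A)$ acts transitively on extensions of $p \in \tS_n(A)$ to $\acl^{eq}(A)$ for the global case. You have simply unpacked these pointers with the expected details (including the realize-and-use-homogeneity argument for the ``easier fact''), so there is nothing to correct or compare.
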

\begin{proof}
  The first assertion follows from \fref{fct:TransActLocalTypes} and
  \fref{prp:NFExtDef}.\fref{item:NFExtDefStationary}.
  For the second we need the even easier fact that $\Aut(\cM/A)$ acts
  transitively on the extensions of a complete type $p \in \tS_n(A)$
  to $\acl^{eq}(A)$.
\end{proof}

\begin{cor}
  \label{cor:DefinableStationary}
  Let $\varphi(\bar x,\bar y)$ be a stable formula $\cM$ a model.
  A type $p \in \tS_\varphi(M)$
  is definable over $A$ if and only if
  it does not fork over $A$ and $p\rest_A$
  is stationary.
\end{cor}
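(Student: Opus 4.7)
The plan is to reduce both directions to two tools already in place: \fref{lem:InvariantDefinable} (a predicate or type is $A$-definable iff $A$-invariant) and \fref{cor:TransActNonForking} (the group $\Aut(\cM/A)$ acts transitively on the set of non-forking extensions of $p\rest_A$ in $\tS_\varphi(M)$). To use the latter I would first assume without loss that $\cM$ is saturated over $A$, passing to an elementary extension otherwise; $A$-definability of $p$, its non-forking over $A$, and stationarity of $p\rest_A$ are all preserved and reflected by this enlargement, so nothing is lost.

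For the ``only if'' direction, suppose $p$ is definable over $A$. Then $\Cb(p) \in \dcl^{eq}(A) \subseteq \acl^{eq}(A)$, so $p$ does not fork over $A$, and by \fref{lem:InvariantDefinable} the type $p$ is $A$-invariant. By \fref{cor:TransActNonForking}, the non-forking extensions of $p\rest_A$ in $\tS_\varphi(M)$ form a single $\Aut(\cM/A)$-orbit, which in our case is $\{p\}$ by invariance. For an arbitrary $B \supseteq A$ (which I place inside $\cM$) any non-forking extension $q$ of $p\rest_A$ in $\tS_\varphi(B)$ extends, by the very definition of non-forking, to some non-forking $q' \in \tS_\varphi(M)$; since $q' = p$, we get $q = p\rest_B$, so $p\rest_A$ is stationary. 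Conversely, assume $p$ does not fork over $A$ and $p\rest_A$ is stationary. Stationarity applied with the larger set $\cM$ says that $p$ is the unique non-forking extension of $p\rest_A$ to $\cM$; but for every $\sigma \in \Aut(\cM/A)$, $\sigma p$ is also a non-forking extension of $p\rest_A$, so $\sigma p = p$. Thus $p$ is $A$-invariant, and \fref{lem:InvariantDefinable} yields that $d_p\varphi$ is $A$-definable.

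I do not anticipate any genuine obstacle: the content of the statement is essentially the translation of ``definable $=$ invariant'' through the transitive automorphism action on non-forking extensions. The only thing to be careful about is the bookkeeping on saturation, so that \fref{cor:TransActNonForking} is genuinely applicable, and the remark that a non-forking extension of $p\rest_A$ to a smaller set $B$ always lifts to a non-forking extension to the ambient saturated $\cM$, which is used in the forward direction to transfer uniqueness over $\cM$ down to uniqueness over $B$.
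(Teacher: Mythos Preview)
Your proposal is correct and matches the paper's own proof essentially line for line: both reduce to a saturated model, use \fref{cor:TransActNonForking} together with $A$-invariance (via \fref{lem:InvariantDefinable}) for the forward direction, and use uniqueness of the non-forking extension plus \fref{lem:InvariantDefinable} for the converse. You have merely made explicit a couple of points the paper leaves implicit, namely that $\Cb(p)\in\dcl^{eq}(A)$ yields non-forking, and that uniqueness over $\cM$ descends to uniqueness over any $B\subseteq\cM$.
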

\begin{proof}
  We may assume that $\cM$ saturated over $A$.
  Let $p' \in \tS_\varphi(M)$ be any non forking extension of
  $p\rest_A$.
  By \fref{cor:TransActNonForking} there is an automorphism
  $f \in \Aut(\cM/A)$ sending $p$ to $p'$.
  If $p$ is definable over $A$ then $p' = f(p) = p$.
  Conversely, if $p$ does not fork over $A$ and
  $p\rest_A$ is stationary then
  $\Aut(\cM/A)$ fixes $p$ and therefore fixes $d_p\varphi$.
  By \fref{lem:InvariantDefinable},
  the latter is over $A$.
\end{proof}

\begin{cor}
  \label{cor:CompatNFExt}
  Let $\varphi(\bar x,\bar y)$ be a stable formula,
  $A$ a set, $q(\bar x) \in \tS_n(A)$ a complete type
  over $A$, and let $p_0 = q\rest_\varphi \in \tS_\varphi(A)$.
  Then $q$ is compatible with every non forking extension of $p_0$.
\end{cor}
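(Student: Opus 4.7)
The plan is to reduce to the case where the non-forking extension lives over a saturated model, then obtain one non-forking extension compatible with $q$ by a direct application of \fref{lem:NFExtExist}, and finally transport it to the given one by the transitivity of $\Aut(\cM/A)$.

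First, let $p \in \tS_\varphi(B)$ be an arbitrary non-forking extension of $p_0$. By the very definition of non-forking, $p$ admits some extension $p' \in \tS_\varphi(M)$ which is definable over $\acl^{eq}(A)$, i.e., $p'$ is still a non-forking extension of $p_0$, for $\cM$ any sufficiently large model. Choose $\cM$ to be saturated over $A$ and containing $B$. Since any consistent extension of $p'\cup q$ restricts to a consistent extension of $p \cup q$, it suffices to prove the statement for $p'$, hence we may assume from the outset that the extension is some $p' \in \tS_\varphi(M)$ over a saturated model $\cM$ over $A$.

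Now view $q(\bar x)$ as a partial type over $M$; since $q$ has parameters in $A$ it is trivially $A$-invariant and consistent. By \fref{lem:NFExtExist} applied to $\pi = q$ there exists $p_1 \in \tS_\varphi(M)$ that is compatible with $q$ and does not fork over $A$. Any realisation $\bar a$ of $p_1 \cup q$ satisfies $q$, hence $\tp_\varphi(\bar a/A) = q\rest_\varphi = p_0$, so that $p_1\rest_A = p_0$ and $p_1$ is indeed a non-forking extension of $p_0$ to $M$.

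We now have two non-forking extensions $p_1, p' \in \tS_\varphi(M)$ of the same type $p_0 \in \tS_\varphi(A)$. By \fref{cor:TransActNonForking} there is $f \in \Aut(\cM/A)$ with $f(p_1) = p'$. Extending $f$ to an automorphism of the ambient monster and applying it to a realisation $\bar a$ of $p_1 \cup q$, we obtain $f(\bar a) \models f(q) \cup f(p_1) = q \cup p'$, since $q$ is over $A$ and $f$ fixes $A$ pointwise. Thus $q$ is compatible with $p'$, as required.

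The only mildly delicate point is the legitimacy of feeding $q$ (a complete type over $A$) into \fref{lem:NFExtExist} as a partial type over $M$; everything else is a straightforward packaging of stationarity over $\acl^{eq}(A)$ together with the transitive action of $\Aut(\cM/A)$ on non-forking extensions, so no genuine obstacle is expected.
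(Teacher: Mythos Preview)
Your proof is correct and follows essentially the same approach as the paper: obtain one non-forking extension compatible with $q$ via \fref{lem:NFExtExist}, then reach all others using the transitive action of $\Aut(\cM/A)$ from \fref{cor:TransActNonForking}. The paper's version is simply more terse, leaving implicit the reduction to a saturated model and the verification that $p_1\rest_A = p_0$, both of which you spell out.
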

\begin{proof}
  By \fref{lem:NFExtExist}, $q$ is compatible with at least one
  non forking extension of $p$ to the monster model.
  By \fref{cor:TransActNonForking} it is compatible with all of them.
\end{proof}

We pass to forking of single conditions.
\begin{dfn}
  Let $\varphi(\bar x,\bar b)$ be an instance of a stable formula,
  $A$ a set.
  We say that a condition $\varphi(\bar x,\bar b) \leq r$
  \emph{does not fork} over $A$
  if there exists a $\varphi$-type $p \in \tS_\varphi(A\bar b)$
  non forking over $A$ such that
  $\varphi(\bar x,\bar b)^p \leq r$.
\end{dfn}

\begin{prp}
  \label{prp:NFCondition}
  Let $\varphi(\bar x,\bar b)$ be an instance of a stable formula,
  $A$ a set of parameters.
  Then the following are equivalent:
  \begin{enumerate}
  \item The condition $\varphi(\bar x,\bar b) \leq r$ does not fork
    over $A$.
  \item
    Every family of
    $\acl^{eq}(A)$-conjugates of $\varphi(\bar x,\bar b) \leq r$
    is consistent.
  \item For every set $B \supseteq A,\bar b$ there exists a complete
    type $p \in \tS_n(B)$ such that
    $p\rest_\psi$ does not fork over $A$ for any stable formula $\psi$
    (if $T$ is stable: $p$ does not fork over $A$)
    and $\varphi(\bar x,\bar b)^p \leq r$.
  \end{enumerate}
\end{prp}
\begin{proof}
  \begin{cycprf}
  \item[\impnext]
    Let $p$ witness that $\varphi(\bar x,\bar b) \leq r$ does not fork
    over $A$.
    Then any non forking extension of $p$ to a large
    model is $\acl^{eq}(A)$-invariant.
  \item[\impnext]
    We may assume that $B = \cM$ is saturated over $A$.
    Let $\pi$ consist of all the $\acl^{eq}(A)$-conjugates of
    $\varphi(\bar x,\bar b) \leq r$ in $\cM$.
    It is consistent by assumption and $\acl^{eq}(A)$-invariant by
    construction so we may apply \fref{cor:CompleteNFExt}.
  \item[\impfirst]
    Immediate.
  \end{cycprf}
\end{proof}

We may define the \emph{non forking degree} of
$\varphi(\bar x,\bar b)$ over $A$ to be
\begin{gather*}
  \nf\bigl( \varphi(\bar x,\bar b)/A \bigr)
  = \inf\bigl\{
  r\colon \varphi(\bar x,\bar b) \leq r \text{ does not fork over } A
  \bigr\}.
\end{gather*}
An easy compactness argument shows that
the infimum is attained and the condition
$\varphi(\bar x,\bar b)
\leq \nf\bigl( \varphi(\bar x,\bar b)/A \bigr)$
does not fork over $A$.
In addition, by the existence of non-forking types we have
$\nf\bigl( \varphi(\bar x,\bar b)/A \bigr)
+ \nf\bigl( \neg\varphi(\bar x,\bar b)/A \bigr) \leq 1$.

\begin{dfn}
  A \emph{faithful continuous connective} in $\alpha$ variables is a
  continuous function $\theta\colon [0,1]^\alpha \to [0,1]$ satisfying
  $\inf \bar a \leq \theta(\bar a) \leq \sup \bar a$.

  If $\theta\colon [0,1]^\alpha \to [0,1]$
  is a faithful continuous connective and
  $(\varphi_i)_{i<\alpha}$ a sequence of definable predicates,
  then the definable predicate $\theta(\varphi_i)_{i < \alpha}$
  is called a
  \emph{faithful combination} of $(\varphi_i)_{i < \alpha}$.
\end{dfn}

Since a continuous function to $[0,1]$ can only take into account
countably many arguments, we may always assume that $\alpha \leq \omega$.
Notice that any connective constructed using $\vee$ and $\wedge$ alone
is faithful
(so in particular the median value connective
$\med_n\colon [0,1]^{2n+1} \to [0,1]$ is).
Similarly, any uniform limit of faithful combinations is faithful.

\begin{lem}
  Let $\varphi(\bar x,\bar y)$ be a stable formula.
  Let $A = \acl^{eq}(A)$ be a set of parameters, $\bar a$ a tuple,
  $|\bar x| = |\bar a|$.
  Let $p = \tp_\varphi(\bar a/A)$.
  Then $d_p\varphi(\bar x,\bar y)$ is a faithful combination of
  $A$-conjugates of $\varphi(\bar a,\bar y)$.
\end{lem}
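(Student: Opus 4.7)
The plan is to build $d_p\varphi(\bar y)$ as a uniform limit of median combinations of instances $\varphi(\bar c_i,\bar y)$ for realisations $\bar c_i$ of $p$, and then invoke the remark just before the lemma that a uniform limit of faithful combinations is faithful.

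First, because $A = \acl^{eq}(A)$, the type $p = \tp_\varphi(\bar a/A)$ is stationary by \fref{prp:NFExtDef}.\fref{item:NFExtDefStationary}. Fix a model $\cM$ saturated over $A$, and let $\hat p \in \tS_\varphi(M)$ be the unique non forking extension of $p$; by definition $d_p\varphi = d_{\hat p}\varphi$. Applying \fref{fct:StabPhiTypeDef}.(iii) to $\hat p$ along a vanishing sequence $\varepsilon_n \to 0$ produces, for each $n$, an integer $k_n = k(\varphi,\varepsilon_n)$ and tuples $\bar c^n_0,\ldots,\bar c^n_{2k_n} \in M$ with $\bar c^n_i \models \hat p\rest_{A\bar c^n_{<i}}$ satisfying
\begin{gather*}
  \bigl|d_p\varphi(\bar y) - \med_{k_n}\bigl(\varphi(\bar c^n_i,\bar y)\bigr)_{i \leq 2k_n}\bigr| < \varepsilon_n.
\end{gather*}
Each $\bar c^n_i$ in particular realises $\hat p\rest_A = p$, so each $\varphi(\bar c^n_i,\bar y)$ is an $A$-conjugate of $\varphi(\bar a,\bar y)$.

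Since the median $\med_{k_n}$ is built from $\vee$ and $\wedge$ alone, it is a faithful connective, so each approximation is a faithful combination of $A$-conjugates of $\varphi(\bar a,\bar y)$. Enumerating all the parameters $\bar c^n_i$ as a single countable family $(\bar a_j)_{j<\omega}$ and regarding each $\med_{k_n}$ as a faithful continuous function on $[0,1]^\omega$ (depending only on finitely many coordinates), the uniform convergence exhibits $d_p\varphi(\bar y)$ as a uniform limit of faithful combinations, which is itself faithful by the remark preceding the lemma.

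No serious obstacle is anticipated; the only point requiring a modicum of care is the bookkeeping needed to view the varying-arity medians $\med_{k_n}$ as faithful functions on one common countable product of copies of $[0,1]$, which is handled by padding with dummy coordinates.
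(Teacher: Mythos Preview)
Your approach is exactly the paper's: the paper's proof is the single line ``by the preceding discussion and the last item of \fref{fct:StabPhiTypeDef}'', and you have correctly unpacked that into median approximations and a uniform limit of faithful combinations.

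There is one small gap worth tightening. You write that each $\bar c^n_i$ realises $\hat p\rest_A = p$, \emph{so} $\varphi(\bar c^n_i,\bar y)$ is an $A$-conjugate of $\varphi(\bar a,\bar y)$. But $p$ is only a $\varphi$-type: having $\tp_\varphi(\bar c^n_i/A) = \tp_\varphi(\bar a/A)$ does not by itself produce an automorphism of the monster fixing $A$ and carrying $\varphi(\bar a,\cdot)$ to $\varphi(\bar c^n_i,\cdot)$. An $A$-conjugate of $\varphi(\bar a,\bar y)$ means $\varphi(\bar a',\bar y)$ with $\bar a' \equiv_A \bar a$ in the full type sense (this is how the conclusion is used downstream, e.g.\ in \fref{prp:FaithfulNF}).

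The fix is immediate and available at this point in the paper. In \fref{fct:StabPhiTypeDef}(iii) you are free to choose any $\bar c^n_i \in M$ realising $\hat p\rest_{A\bar c^n_{<i}}$; simply require in addition that $\bar c^n_i \models \tp(\bar a/A)$. This is consistent by \fref{cor:CompatNFExt}, since $\hat p\rest_{A\bar c^n_{<i}}$ is a non forking extension of $p = \tp(\bar a/A)\rest_\varphi$, and $\cM$ is saturated over $A\bar c^n_{<i}$. With this adjustment each $\bar c^n_i$ is genuinely an $A$-conjugate of $\bar a$ and the rest of your argument goes through verbatim.
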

\begin{proof}
  By the preceding discussion and the last item of
  \fref{fct:StabPhiTypeDef}.
\end{proof}

\begin{lem}
  \label{lem:NonStationaryDefinition}
  Let $\varphi(\bar x,\bar b)$ be an instance of a stable formula,
  $A$ a set of parameters.
  Then there exists an $A$-definable predicate $\psi(\bar x)$
  such that for every tuple $\bar a$ (not necessarily in $A$):
  \begin{align*}
    \psi(\bar a) &
    =
    \inf \{ \varphi(\bar x,\bar b)^p \colon
    p \in \tS_\varphi(A\bar b)
    \text{ is a non forking extension of } \tp_\varphi(\bar a/A)\}
    \\ &
    =
    \inf \{ \varphi(\bar a,\bar y)^q \colon
    q \in \tS_\varphi(A\bar a)
    \text{ is a non forking extension of } \tp_{\tilde \varphi}(\bar b/A)\}.
  \end{align*}
  Moreover, $\psi(\bar x)$ can be taken to be a faithful combination
  of $A$-conjugates of $\varphi(\bar x,\bar b)$.
\end{lem}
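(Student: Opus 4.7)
The plan is to take $\psi(\bar x)$ to be the reformulation of the second infimum via $A^*$-stationarity: with $A^* := \acl^{eq}(A)$ and $q_0 := \tp_{\tilde\varphi}(\bar b/A)$, set
\[
  \psi(\bar x) := \inf\bigl\{d_{q_1}\tilde\varphi(\bar x) : q_1 \in \tS_{\tilde\varphi}(A^*),\ q_1 \supseteq q_0\bigr\}.
\]
Agreement of $\psi(\bar a)$ with the second displayed infimum follows from \fref{prp:NFExtDef}.\fref{item:NFExtDefStationary}: every non-forking extension $q \in \tS_{\tilde\varphi}(A\bar a)$ of $q_0$ is the unique non-forking extension of its restriction $q_1 := q\rest_{A^*}$, for which $\varphi(\bar a,\bar y)^q = d_{q_1}\tilde\varphi(\bar a)$.

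My next step would be to establish $A$-definability and the faithful form. Writing $d_{q_1}\tilde\varphi(\bar x) = d\tilde\varphi(\bar x,\Cb(q_1))$ via the uniform canonical-base presentation, the set of $\Cb(q_1)$ for $q_1 \supseteq q_0$ is a compact $A$-invariant subset $K$ of an imaginary sort; \fref{lem:CompactDef} then presents $\psi(\bar x) = \sinf_{c \in K} d\tilde\varphi(\bar x,c)$ as a single definable predicate with parameters in $K$, and $A$-invariance together with \fref{lem:InvariantDefinable} put it over $A$. For the ``moreover'' clause, I would fix $q_1$ and, via \fref{fct:TransActLocalTypes}, pick $f \in \Aut(\fM/A)$ sending $\tp_{\tilde\varphi}(\bar b/A^*)$ to $q_1$. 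The preceding (unlabeled) lemma then writes $d_{q_1}\tilde\varphi(\bar x)$ as a faithful combination of $A^*$-conjugates of $\varphi(\bar x, f(\bar b))$, which are $A$-conjugates of $\varphi(\bar x,\bar b)$ since $f(\bar b) \equiv_A \bar b$. Because $\bigwedge$ is faithful and compositions and uniform limits of faithful combinations remain faithful, and since \fref{lem:CompactDef} presents $\psi$ as a uniform limit of finite infima of the $d_{q_1}\tilde\varphi$, we obtain $\psi$ as a faithful combination of $A$-conjugates of $\varphi(\bar x,\bar b)$.

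The last and most delicate task is to match $\psi(\bar a)$ with the first infimum. With $p_0 := \tp_\varphi(\bar a/A)$ and $p_1^0 := \tp_\varphi(\bar a/A^*)$, stationarity rewrites the first infimum as $\inf\bigl\{d_{p_1}\varphi(\bar b) : p_1 \in \tS_\varphi(A^*),\ p_1 \supseteq p_0\bigr\}$. By \fref{fct:TransActLocalTypes}, every such $p_1$ equals $f(p_1^0)$ for some $f \in \Aut(\fM/A)$, and the identity $d_{f(p_1^0)}\varphi(\bar b) = d_{p_1^0}\varphi(f^{-1}(\bar b))$ turns this into $\inf\bigl\{d_{p_1^0}\varphi(\bar b') : \bar b' \equiv_A \bar b\bigr\}$. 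Since $d_{p_1^0}\varphi$ is an $A^*$-predicate it depends on $\bar b'$ only through $\tp_{\tilde\varphi}(\bar b'/A^*)$, which by \fref{fct:TransActLocalTypes} applied to $q_0$ ranges over all $q_1 \supseteq q_0$; thus the infimum becomes $\inf_{q_1} d_{p_1^0}\varphi^{q_1}$. Finally, applying \fref{fct:NFSymmetry} to the unique non-forking extensions of $p_1^0$ and $q_1$ in any ambient model — legitimate by stationarity of both over $A^*$ — gives $d_{p_1^0}\varphi^{q_1} = d_{q_1}\tilde\varphi^{p_1^0} = d_{q_1}\tilde\varphi(\bar a)$, matching $\psi(\bar a)$. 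The main subtlety is precisely this descent of \fref{fct:NFSymmetry} from types over a model down to types over $A^*$, effected via stationarity.
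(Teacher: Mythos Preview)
Your proposal is correct and follows essentially the same route as the paper: define $\psi(\bar x)$ as the infimum of the $\tilde\varphi$-definitions over the compact set of $A$-conjugates of the canonical base (equivalently, over the extensions $q_1$ of $q_0$ to $\acl^{eq}(A)$), invoke \fref{lem:CompactDef} for definability, the preceding lemma for faithfulness, and \fref{fct:NFSymmetry} to match the first infimum. The only difference is organizational: the paper parameterizes directly by $g \in \Aut(\cM/A)$ acting on a single non-forking extension $q$ over a model and carries out a unified chain of equalities $\inf_g d_{gq}\tilde\varphi(\bar a) = \inf_g d_{g^{-1}p}\varphi(\bar y)^q = \inf_g \varphi(\bar x,\bar b)^{gp}$, whereas you parameterize by the extensions $q_1$ themselves and verify the two infima separately, descending symmetry from the model to $A^*$ via stationarity; the content is the same.
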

\begin{proof}
  Fix a model $\cM \supseteq A,\bar b$, saturated over $A$.
  Let $G = \Aut(\cM/A)$.
  Let $q \in \tS_{\tilde \varphi}(M)$
  be the unique non forking extension of
  $\tp_{\tilde \varphi}(\bar b/\acl^{eq}(A))$.
  Let $\chi(\bar x,c) = d_q\tilde \varphi(\bar x)$,
  where $c \in \acl^{eq}(A)$ is the canonical parameter for the
  definition.
  By the previous Lemma, $\chi(\bar x,c)$ is a faithful combination of
  $\acl^{eq}(A)$-conjugates of $\varphi(\bar x,\bar b)$.

  Let $C$ be the set of $A$-conjugates of $c$.
  Since $c$ is algebraic over $A$, $C$ is (metrically)
  compact.
  By \fref{lem:CompactDef} 
  $\psi(\bar x) = \inf_{c' \in C} \chi(\bar x,c')$
  is a continuous combination
  of instances $\chi(\bar x,c')$ with $c' \in C$, i.e., of
  $A$-conjugates of $\chi(\bar x,c)$, and it is clearly a faithful
  combination.
  Thus $\psi(\bar x)$ is a faithful combination of $A$-conjugates of
  $\varphi(\bar x,\bar b)$, and it is clearly over $A$.

  We may assume that $\bar a \in M$, and let
  $p \in \tS_\varphi(M)$
  be the unique non forking extension of
  $\tp_\varphi(\bar a/\acl^{eq}(A))$.
  Then
  \begin{align*}
    \psi(\bar a) &
    = \inf_{g \in G} \chi(\bar a,gc)
    = \inf_{g \in G} d_{gq}\tilde \varphi(\bar a)
    = \ldots
    \\ &
    \ldots
    = \inf_{g \in G} d_{g^{-1}p}\varphi(\bar y)^q
    = \inf_{g \in G} \varphi(\bar x,\bar b)^{gp},
    \\ &
    \ldots
    = \inf_{g \in G} \varphi(\bar a,\bar y)^{gq}.
  \end{align*}
  Since
  $\{gp\}_{g\in G}$
  and
  $\{gq\}_{g\in G}$
  are the sets of non forking extensions of
  $\tp_\varphi(\bar a/A)$ and of $\tp_{\tilde \varphi}(\bar b/A)$, respectively, to
  $M$, we are done.
\end{proof}

\begin{thm}[Open Mapping Theorem]
  Assume $T$ is stable, and let $A \subseteq B$ be any sets of
  parameters.
  Let $X \subseteq \tS_n(B)$ be the set of types which do not fork
  over $A$.
  Then $X$ is compact and the restriction mapping
  $\rho_A\colon X \to \tS_n(A)$
  sending $p \mapsto p\rest_A$ is an open continuous surjective
  mapping.
\end{thm}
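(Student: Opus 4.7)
The plan is to handle compactness, continuity and surjectivity of $\rho_A$ as quick preliminaries and focus the real work on openness. For compactness, since $T$ is stable every formula $\psi$ is stable, so by the last item of \fref{prp:NFExtDef} the set of $\psi$-types over $B$ that do not fork over $A$ is closed; pulling back along the continuous restriction $\tS_n(B) \to \tS_\psi(B)$ and intersecting over all $\psi$ exhibits $X$ as a closed subset of the compact space $\tS_n(B)$. Continuity of $\rho_A$ is immediate, and surjectivity is a direct application of \fref{cor:CompleteNFExt}: extend any $q' \in \tS_n(A)$ to a complete non-forking type over a saturated model $\cM \supseteq B$ and restrict to $B$.

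For openness, my plan is to reduce an arbitrary basic open $U = X \cap \bigcap_{i=1}^n [\varphi_i(\bar x, \bar b_i) < r_i]$ containing some chosen $q_0$ to a single-formula condition. I pick $r'_i \in (\varphi_i^{q_0}, r_i)$ and set $\delta := \min_i (r_i - r'_i) > 0$, then form $\varphi(\bar x, \bar y_1, \ldots, \bar y_n) := \bigvee_i \bigl(\varphi_i(\bar x, \bar y_i) \dotminus r'_i\bigr)$ with parameter $\bar b = (\bar b_1, \ldots, \bar b_n)$. This is a formula, stable since $T$ is, with $\varphi^{q_0} = 0$ and with the implication $\varphi^p < \delta \Rightarrow \varphi_i^p < r_i$ for every $i$ and every type $p$. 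Applying \fref{lem:NonStationaryDefinition} to $\varphi$ and $\bar b$ produces an $A$-definable predicate $\psi(\bar x)$ whose value at any $\bar a$ is the (attained) infimum of $\varphi(\bar x, \bar b)^p$ over non-forking $\varphi$-extensions $p \in \tS_\varphi(A\bar b)$ of $\tp_\varphi(\bar a/A)$. I take $V := [\psi < \delta] \subseteq \tS_n(A)$ as the candidate neighbourhood of $\rho_A(q_0)$.

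Checking $\rho_A(q_0) \in V$ is immediate: $q_0\rest_\varphi$ is itself a non-forking extension of $q_0\rest_{A,\varphi}$, so $\psi^{q_0} \leq \varphi^{q_0} = 0 < \delta$. For the inclusion $V \subseteq \rho_A(U)$, given $q' \in V$, the attained infimum yields a non-forking $p_0 \in \tS_\varphi(A\bar b)$ extending $q'\rest_\varphi$ with $\varphi^{p_0} < \delta$. By the definition of non-forking, $p_0$ admits an extension $p_0^* \in \tS_\varphi(M)$ definable over $\acl^{eq}(A)$ (hence $\acl^{eq}(A)$-invariant), where $\cM \supseteq B$ is saturated over $A$. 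The partial type $\pi := q' \cup p_0^*$ is consistent by \fref{cor:CompatNFExt} and $\acl^{eq}(A)$-invariant, so \fref{cor:CompleteNFExt}, applied with $A$ replaced by $\acl^{eq}(A)$, produces a complete $p^{**} \in \tS_n(M)$ extending $\pi$ and non-forking over $A$ (stability of $T$ ensures non-forking on every formula). Then $p^{**}\rest_B \in X$, maps to $q'$, and satisfies $\varphi^{p^{**}} = \varphi^{p_0} < \delta$, hence $\varphi_i^{p^{**}} < r_i$ for every $i$, placing $p^{**}\rest_B \in U$.

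The main obstacle I anticipate is the multi-condition nature of $U$: different $\varphi_i$'s may attain their infima over non-forking extensions of $q'$ at different $\Aut(\cM/A)$-conjugates, so witnessing the $n$ strict inequalities simultaneously by a single complete non-forking extension is not automatic. Bundling the conditions into one formula $\varphi$ via the $\bigvee$\,/\,$\dotminus$ construction sidesteps this entirely, reducing the whole openness argument to one invocation each of \fref{lem:NonStationaryDefinition} and \fref{cor:CompleteNFExt}.
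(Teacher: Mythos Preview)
Your proof is correct and follows essentially the same route as the paper: both hinge on \fref{lem:NonStationaryDefinition} to produce the $A$-definable $\psi$, and then on \fref{cor:CompleteNFExt} (together with \fref{cor:CompatNFExt}, which you invoke explicitly and the paper uses implicitly) to lift a witnessing non-forking $\varphi$-type to a complete non-forking type restricting to the given $q'$. The only cosmetic difference is that the paper works directly with a single basic open $X \cap [\varphi(\bar x,\bar b) < 1]$ (such sets already form a basis of $X$), so your bundling of several $\varphi_i$ into one $\varphi$ via $\bigvee$ and $\dotminus$ is correct but unnecessary.
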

\begin{proof}
  We already know that $X$ is compact and that $\rho_A$ is
  continuous and surjective.

  Consider a basic open subset $U \subseteq X$, of the form
  $U = X \cap [\varphi(\bar x,\bar b) < 1]$.
  Let $\psi(\bar x)$ be as in \fref{lem:NonStationaryDefinition} and
  let
  $V = [\psi(\bar x) < 1] \subseteq \tS_n(A)$.
  By \fref{cor:CompleteNFExt} every $\varphi$-type over $B$ which does
  not fork over $A$ extends to a complete type over $B$ which does not
  fork over $A$, whence $V = \rho_A(U)$.
\end{proof}

Notice that a similar proof yields that if $\varphi(\bar x,\bar y)$
is stable then the restriction mapping
$\rho_{A,\varphi}\colon X_\varphi \to \tS_\varphi(A)$ is open,
where $X_\varphi \subseteq \tS_\varphi(B)$ denotes the set of
$\varphi$-types which do not fork over $A$.

It follows from \fref{lem:NonStationaryDefinition}
that a $\tilde \varphi$-type (and therefore a $\varphi$-type)
over an arbitrary set $A$ is definable over $A$, but of course the
same definition applied to a larger set need not give a consistent
complete type.
This yields the following (adaptation of a) classical result:

\begin{thm}[Separation of variables]
  Let $\varphi(\bar x,\bar b)$ be an instance of a stable formula,
  and let $X$ be a type-definable set in the sort of $\bar x$,
  say with parameters in $A$.
  Then there is a subset (at most countable) $B \subseteq X$
  and a $B$-definable predicate $\psi(\bar x)$ such that
  $\psi(\bar x)\rest_X = \varphi(\bar x,\bar b)\rest_X$.

  Moreover, $\psi(\bar x)$ can be taken to be a faithful combination
  of instances $\varphi(\bar x,\bar b')$
  such that $\bar b' \equiv_B \bar b$
  (or even $\bar b' \equiv_{B'} \bar b$ where $B' \subseteq X$ is an
  arbitrary small subset).
\end{thm}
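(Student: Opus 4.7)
The plan is to isolate a countable $B\subseteq X$ that already carries all information about $\bar b$ visible from $X$, and then read off $\psi$ from \fref{lem:NonStationaryDefinition}. Fix a monster $\cM$ saturated over $A\bar b$, write $X(\cM)$ for the realisations of $X$ in $\cM$, and consider the complete $\tilde\varphi$-type $q^\ast:=\tp_{\tilde\varphi}(\bar b/X(\cM))$. As $q^\ast$ is tautologically fixed by $\Aut(\cM/X(\cM))$, so is its definition $d_{q^\ast}\tilde\varphi(\bar x)$, and \fref{lem:InvariantDefinable} therefore places its canonical base $c:=\Cb(q^\ast)$ in $\dcl^{eq}(X(\cM))$.

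The key step is to extract, from $c\in\dcl^{eq}(X(\cM))$, an at most countable $B\subseteq X(\cM)$ with $c\in\dcl^{eq}(B)$. This is the countable character of $\dcl^{eq}$ on imaginaries: the sort carrying $c$ is built in the excerpt as the completion of a separable pseudo-metric quotient, so each of its elements is the limit of a Cauchy sequence all of whose representatives involve only countably many $X(\cM)$-parameters, which we collect into $B$. Once $c\in\dcl^{eq}(B)$, the type $\tp_{\tilde\varphi}(\bar b/B)$ is stationary with canonical base $c$, its unique non-forking extension to $\cM$ restricts to $q^\ast$ on $X(\cM)$, and therefore for every $\bar a\in X$ the restriction $\tp_{\tilde\varphi}(\bar b/B\bar a)=q^\ast\rest_{B\bar a}$ is the \emph{unique} non-forking extension of $\tp_{\tilde\varphi}(\bar b/B)$ to $B\bar a$.

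Applying \fref{lem:NonStationaryDefinition} to the instance $\varphi(\bar x,\bar b)$ over $B$ now directly produces a $B$-definable predicate $\psi(\bar x)$, exhibited as a faithful combination of instances $\varphi(\bar x,\bar b')$ with $\bar b'\equiv_B\bar b$, and satisfying
\begin{gather*}
  \psi(\bar a)=\sinf\bigl\{\varphi(\bar a,\bar y)^{q'}:q'\in\tS_{\tilde\varphi}(B\bar a)\text{ non forking over }B\bigr\}.
\end{gather*}
For $\bar a\in X$ the infimum collapses to the unique term $\varphi(\bar a,\bar y)^{\tp_{\tilde\varphi}(\bar b/B\bar a)}=\varphi(\bar a,\bar b)$, whence $\psi\rest_X=\varphi(\bar x,\bar b)\rest_X$. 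The parenthetical strengthening is obtained by simply absorbing the given small $B'\subseteq X$ into $B$ from the outset before running the construction.

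The only non-bookkeeping step, and thus the main focus when writing up, is the countable-character assertion used to pull $B$ out of $\dcl^{eq}(X(\cM))$; everything else is a direct assembly of \fref{lem:InvariantDefinable} and \fref{lem:NonStationaryDefinition}.
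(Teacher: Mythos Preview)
Your argument has a genuine gap at the step where you claim that $\tp_{\tilde\varphi}(\bar b/B)$ is stationary.  The object $\Cb(q^*)$ you invoke is not well-defined: canonical bases, in the sense of \fref{fct:StabPhiTypeDef} and the discussion following it, are attached to $\tilde\varphi$-types over \emph{models}, or to stationary types over arbitrary sets, and $q^* = \tp_{\tilde\varphi}\bigl(\bar b/X(\cM)\bigr)$ is neither in general.  The remark preceding the theorem does say that a $\tilde\varphi$-type over an arbitrary set $A$ is ``definable over $A$'', but the witness to that is precisely the predicate $\psi$ of \fref{lem:NonStationaryDefinition}, which, as the paper explicitly warns, need not define a consistent type over any larger set.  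Its canonical parameter is therefore not the canonical base of any extension of $q^*$, and having it lie in $\dcl^{eq}(B)$ tells you nothing about stationarity of $\tp_{\tilde\varphi}(\bar b/B)$.  Concretely, if $q^*$ itself fails to be stationary (which can certainly happen when $X(\cM)$ is not a model) then no restriction $\tp_{\tilde\varphi}(\bar b/B)$ with $B\subseteq X(\cM)$ can be stationary either, your infimum in the second application of \fref{lem:NonStationaryDefinition} does not collapse, and the resulting $B$-definable predicate need not agree with $\varphi(\cdot,\bar b)$ on $X$.

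The paper sidesteps this entirely by applying \fref{lem:NonStationaryDefinition} \emph{once}, directly over $C = X(\cM)$ rather than over a small $B$.  For $\bar a\in C$ the infimum there ranges over non-forking extensions of $\tp_{\tilde\varphi}(\bar b/C)$ to $C\bar a = C$, i.e., over the single term $\tp_{\tilde\varphi}(\bar b/C)$ itself, so it collapses trivially to $\varphi(\bar a,\bar b)$ with no stationarity hypothesis needed.  The resulting $\psi$ is $C$-definable, hence definable over some countable $B\subseteq C$, and one finishes by a short saturation argument pulling an arbitrary $\bar a\in X$ back to a conjugate in $C$.  Note that what you call ``$d_{q^*}\tilde\varphi$'' is already essentially this $\psi$: your detour through a second application of the lemma over $B$ is both unnecessary and exactly where the argument breaks.
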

\begin{proof}
  Fix a model $\cM \supseteq A,\bar b$, saturated over $A$,
  and let $C = X(\cM)$.
  Let $\psi(\bar x)$ be as in \fref{lem:NonStationaryDefinition}.
  Then $\psi(\bar x)$ is definable over $C$ and therefore over $B$
  where $B \subseteq C$ is an appropriate countable subset.
  Then for all $\bar a \in C$ we have
  $\psi(\bar a)
  = \varphi(\bar a,\bar y)^{\tp_{\tilde \varphi}(\bar b/C)}
  = \varphi(\bar a,\bar b)$.
  Now let $\fM$ be the monster model and
  $\bar a \in X = X(\fM)$.
  By saturation of $\cM$ we can find there some
  $\bar a' \equiv_{AB\bar b} \bar a$.
  Then $\bar a' \in C$
  and
  $\varphi(\bar a,\bar b)
  = \varphi(\bar a',\bar b)
  = \psi(\bar a')
  = \psi(\bar a)$,
  as desired.

  The moreover part follows from the proof.
\end{proof}

It follows that if $X$ is an $A$-type-definable set and
$Y \subseteq X$ is a type-definable subset, then $Y$ is type-definable
over $AB$ for some countable $B \subseteq X$.
If $Y$ is a definable set then it is definable over $AB$
(by \fref{lem:InvariantDefinable}, since the definable predicate
$d(\bar x,Y)$ is $AB$-invariant). 

\begin{prp}
  \label{prp:FaithfulNF}
  Let $\varphi(\bar x,\bar b)$ be an instance of a stable formula,
  $A$ a set of parameters.
  Then the following are equivalent:
  \begin{enumerate}
  \item The condition $\varphi(\bar x,\bar b) \leq r$ does not fork
    over $A$.
  \item There is an $A$-definable predicate $\psi(\bar x)$ which is
    a faithful combination of $A$-conjugates
    of $\varphi(\bar x,\bar b)$
    such that $\psi(\bar x) \leq r$ is consistent.
  \end{enumerate}
\end{prp}
\begin{proof}
  Fix a model $\cM \supseteq A,\bar b$ saturated over $A$.
  \begin{cycprf}
  \item[\impnext]
    Let $\psi(\bar x)$ be as in \fref{lem:NonStationaryDefinition}.
    Let also $p \in \tS_\varphi(A\bar b)$ be non forking over $A$
    such that $\varphi(\bar x,\bar b)^p \leq r$.
    Then
    $\psi(\bar x)^p \leq \varphi(\bar x,\bar b)^p \leq r$.
  \item[\impfirst]
    Let
    $\psi(\bar x) = \theta\bigl(
    \varphi(\bar x,\bar b_n)
    \bigr)_{n\in\bN}$
    be definable over $A$ as in the assumption
    (so $\bar b_n \equiv_A \bar b$ and $\theta$ is a faithful
    continuous connective).

    By \fref{lem:NFExtExist} there exists $p \in \tS_\varphi(M)$
    compatible with $\psi(\bar x) \leq r$ and non forking over $A$,
    so in particular $\acl^{eq}(A)$-invariant.
    Then $\inf_n \varphi(\bar x,\bar b_n)^p \leq r$ by faithfulness,
    so for all $r' > r$ there exists $n$ such that
    $\varphi(\bar x,\bar b_n)^p < r'$.
    Up to an automorphism fixing $A$ we may assume that
    $\varphi(\bar x,\bar b)^p < r'$,
    and by invariance $\varphi(\bar x,\bar b')^p < r'$
    for every $\bar b' \equiv_{\acl^{eq}(A)} \bar b$.

    We have thus shown that for every $r' > r$,
    any set of $\acl^{eq}(A)$-conjugates of
    $\varphi(\bar x,\bar b) \leq r'$ is consistent.
    By compactness the same holds for
    $\varphi(\bar x,\bar b) \leq r$.
  \end{cycprf}
\end{proof}

\section{Heirs and co-heirs}

We turn to study co-heirs, and more generally, approximately realised
partial types, in continuous logic.
In the context of stability, approximate realisability serves as a
criterion for non forking.
For an earlier treatment of co-heirs in the context of metric
structures see \cite[Section~3.2]{BenYaacov:Morley}.

\begin{dfn}
  Let $A \subseteq B$ be two sets of parameters.
  We say that a partial type $\pi$ over $B$
  is \emph{approximately realised} in $A$ if every
  logical neighbourhood (\fref{dfn:LogNeighb}) of $\pi$ over $B$
  is realised in $A$.

  If $\cM$ is a model, $B \supseteq M$, and
  $p \in \tS_n(B)$ is approximately realised in $M$,
  we may say that $p$ is
  a \emph{co-heir} of its restriction to $\cM$.
\end{dfn}

\begin{rmk}
  \begin{enumerate}
  \item The classical logic analogue of an approximately realised type
    is a \emph{finitely realised} one, but this terminology would be
    misleading in the continuous setting.
  \item A complete type over a model $\cM$ is always approximately
    realised there.
    (This is essentially the Tarski-Vaught Criterion.)
  \end{enumerate}
\end{rmk}

\begin{fct}
  Let $A \subseteq B$ and let $\pi(\bar x)$ be a partial type over
  $B$.
  \begin{enumerate}
  \item
    Let $X \subseteq \tS_n(B)$ consist of all
    types over $B$ which are realised in $A$,
    $[\pi] \subseteq \tS_n(B)$ the closed set defined by $\pi$.
    Then $\pi$ is approximately realised in $A$ if and only if
    $[\pi] \cap \overline X \neq \emptyset$.
    In particular, $\overline X$ is the set of all complete
    $n$-types over $B$ which are approximately realised in $A$.
  \item
    If $C \supseteq B$ then $\pi$ is approximately realised in $A$ as
    a partial type over $B$ if and only if it is approximately
    realised in $A$ as a partial type over $C$.
  \item
    If $\pi$ is approximately realised in $A$ then it extends to a
    complete type $\pi \subseteq p \in \tS_n(B)$
    which is approximately realised in $A$.
  \item
    A type over a model $\cM$ admits extensions to arbitrary sets
    which are approximately realised in $M$.
  \end{enumerate}
\end{fct}
\begin{proof}
  We prove the first two items together.
  Clearly if $\pi$ is approximately realised in $A$ as a partial type
  over $C$ then it is approximately realised in $A$ as a partial type
  over $B$, in which case every neighbourhood of $[\pi]$ in $\tS_n(B)$
  intersects $X$ and by a compactness argument $[\pi]$ intersects
  $\overline X$.
  Finally, assume $[\pi] \cap \overline X \neq \emptyset$ and assume
  that $\pi \vdash \varphi(\bar x) > 0$.
  Let $Y = [\varphi = 0] \subseteq \tS_n(C)$ and let $Z$ be its
  projection to $\tS_n(B)$.
  Then $Z$ is compact, $Z \cap [\pi] = \emptyset$, so
  $U = \tS_n(B) \setminus Z$ is a neighbourhood of $[\pi]$.
  By assumption there exists $\bar a \in A$
  such that $\tp(\bar a/B) \in X \cap U$.
  Then
  $\tp(\bar a/C) \notin Y$, i.e., $\varphi(\bar a) > 0$,
  as desired.

  For the third item, any $p \in [\pi] \cap \overline X$ will do.
  For the fourth, use the fact that a type over a model is approximately
  realised there.
\end{proof}

\begin{fct}
  Let $\cN$ be a model saturated over $A \subseteq N$.
  If $p \in \tS_n(N)$ or $p \in \tS_\varphi(N)$
  is approximately realised in $A$ then it is
  $A$-invariant.
\end{fct}
\begin{proof}
  We only consider the case $p \in \tS_\varphi(N)$,
  since the case $p \in \tS_n(N)$ follows from it.
  Say $\bar b,\bar c \in N$, $\bar b \equiv_A \bar c$, and let
  $\varepsilon > 0$ be given.
  By assumption there is $\bar a \in A$ such that
  \begin{gather*}
    |\varphi(\bar a,\bar b) - \varphi(\bar x,\bar b)^p| < \varepsilon/2, \qquad
    |\varphi(\bar a,\bar c) - \varphi(\bar x,\bar c)^p| < \varepsilon/2.
  \end{gather*}
  As we assumed that $\bar b \equiv_A \bar c$ we have in particular
  $\varphi(\bar a,\bar b) = \varphi(\bar a,\bar c)$ and thus
  $|\varphi(\bar x,\bar b)^p - \varphi(\bar x,\bar c)^p|
  < \varepsilon$,
  for every $\varepsilon > 0$.
  We conclude that
  $\varphi(\bar x,\bar b)^p = \varphi(\bar x,\bar c)^p$,
  as desired.
\end{proof}

\begin{lem}
  \label{lem:CoHeirNF}
  Let $A \subseteq B$,
  $p(\bar x) \in \tS_n(B)$ approximately realised in $A$,
  and assume $\varphi(\bar x,\bar y)$ is stable.
  Then $p\rest_\varphi \in \tS_\varphi(B)$ does not fork over $A$.
\end{lem}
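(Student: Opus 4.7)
The plan is to lift $p$ to a complete type over a sufficiently saturated model, then combine the $A$-invariance of approximately realised types (the fact stated just above the lemma) with the characterisation of non forking provided by \fref{cor:NonForkingInvariant}. The argument is essentially a gluing of the preceding machinery, so no new ideas should be required.

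Concretely, I would begin by choosing a model $\cN \supseteq B$ that is saturated over $A$. The facts about approximate realisability let me transport $p$ to such an $\cN$: viewing $p$ as a partial type over $\cN$ preserves approximate realisability in $A$, and by the extension statement every partial type approximately realised in $A$ can be completed to $\tilde p \in \tS_n(N)$ that is still approximately realised in $A$. Since $\cN$ is saturated over $A$, the preceding fact yields that $\tilde p$, and therefore its $\varphi$-restriction $\tilde p\rest_\varphi \in \tS_\varphi(N)$, is $A$-invariant.

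Now I would bootstrap $A$-invariance to $\acl^{eq}(A)$-invariance for free: automorphisms of $\cN$ fixing $\acl^{eq}(A)$ pointwise form a subgroup of those fixing $A$ pointwise, so any $A$-invariant object is automatically $\acl^{eq}(A)$-invariant. By \fref{cor:NonForkingInvariant}, this implies that $\tilde p\rest_\varphi$ does not fork over $A$. Because $\tilde p\rest_\varphi$ is an extension of $p\rest_\varphi \in \tS_\varphi(B)$ to the model $\cN$, the definition of non forking (together with the remark that the extension is itself non forking) gives that $p\rest_\varphi$ does not fork over $A$ either.

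The only delicate point is bookkeeping: I must take $\cN$ large enough that it both contains $B$ and is saturated over $A$ in the sense required by \fref{cor:NonForkingInvariant}. Once that is in place, the proof is a clean concatenation of the cited facts, with no real obstacle.
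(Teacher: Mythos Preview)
Your proposal is correct and follows essentially the same route as the paper's own proof: pass to a model $\cN \supseteq B$ saturated over $A$, extend $p$ to $q \in \tS_n(N)$ still approximately realised in $A$, use $A$-invariance of $q$ (hence of $q\rest_\varphi$) together with \fref{cor:NonForkingInvariant} to conclude that $q\rest_\varphi$ does not fork over $A$, and restrict back to $B$. Your explicit remark that $A$-invariance trivially implies $\acl^{eq}(A)$-invariance is the only addition, and it is a harmless unpacking of a step the paper leaves implicit.
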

\begin{proof}
  Let $\cN \supseteq B$ be saturated over $A$ and let
  $q \in \tS_n(N)$ extend $p$, still approximately realised in $A$.
  Then $q$, and thus $q\rest_\varphi$, are $A$-invariant,
  so $q\rest_\varphi$ does not fork over $A$ and neither does
  $p\rest_\varphi$.
\end{proof}

\begin{prp}
  \label{prp:CoHeirNF}
  Let $\varphi(\bar x,\bar y)$ be a stable formula,
  $\cM$ a model, $A \supseteq M$.
  Let also
  $p(\bar x) \in \tS_\varphi(A)$ be a complete $\varphi$-type, and
  $q(\bar x) \in \tS_n(M)$ a complete  type over $M$ such that
  $p\rest_M = q\rest_\varphi \in \tS_\varphi(M)$.
  Then the following are equivalent:
  \begin{enumerate}
  \item $p \cup q$ is approximately realised in $M$.
  \item $p$ is approximately realised in $M$.
  \item $p$ does not fork over $M$.
  \end{enumerate}
\end{prp}
\begin{proof}
  \begin{cycprf}
  \item[\impnext] Immediate.
  \item[\impnext]
    Find $p'(\bar x) \in \tS_n(A)$ extending $p$ which is
    approximately realised in $M$ and use
    \fref{lem:CoHeirNF}.
  \item[\impfirst]
    Find $q'(\bar x) \in \tS_n(A)$ extending $q$ which is
    approximately realised in $M$.
    Then $q'\rest_\varphi$ is non forking over $M$ by \fref{lem:CoHeirNF},
    so it must be the unique non forking extension of
    $p\rest_M = q\rest_\varphi$.
    Therefore $q\cup p \subseteq q'$ is approximately realised in $M$.
  \end{cycprf}
\end{proof}

Similarly,
\begin{prp}
  Assume $T$ is stable.
  Let $\cM$ be a model of $T$, $A \supseteq M$,
  $p(\bar x) \in \tS_n(A)$.
  Then the following are equivalent:
  \begin{enumerate}
  \item $p$ does not fork over $M$.
  \item $p$ is approximately realised in $M$.
    \setcounter{dummycnt}{\value{enumi}}
  \end{enumerate}
  If $A = \cN \succeq \cM$ is saturated over $M$ then these are
  further equivalent to
  \begin{enumerate}
    \setcounter{enumi}{\value{dummycnt}}
  \item $p$ is $M$-invariant.
  \end{enumerate}
\end{prp}

\begin{dfn}
  Let $\cM$ be a model, $\cM \subseteq B$.
  A type $p \in \tS_n(B)$ is said to be an \emph{heir}
  of its restriction to $M$ if for every formula
  $\varphi(\bar x,\bar b,\bar m)$ with $\bar b \in B$
  and $\bar m \in M$,
  and for every $\varepsilon > 0$,
  there are $\bar b' \in M$ such that
  $|\varphi(\bar x,\bar b,\bar m) - \varphi(\bar x,\bar b',\bar m)|^p
  < \varepsilon$.
\end{dfn}

Clearly every type over a model is an heir of itself.
Also, it is not difficult to check that
if $\cM$ is a model and $\bar a$, $\bar b$ are two tuples possibly
outside $\cM$ then
\begin{gather*}
  \tp(\bar a/M\bar b) \text{ is an heir of } \tp(\bar a/M)
  \quad \Longleftrightarrow \quad
  \tp(\bar b/M\bar a) \text{ is a co-heir of } \tp(\bar b/M).
\end{gather*}
Finally, a standard compactness argument yields that if
$\cM \subseteq B \subseteq C$ and $p \in \tS_n(B)$ is an heir of
$p\rest_M$ then it admits an extension to $C$ which is an heir as
well.

\begin{lem}
  \label{lem:UniqueHeir}
  Let $\cM$ be a model, $p(\bar x) \in \tS_n(M)$.
  Then $p$ is definable if and only if it has a unique heir to every
  superset $B \supseteq \cM$.
\end{lem}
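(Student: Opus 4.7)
The plan is to handle the two directions separately, with the forward direction a direct computation using the heir property and the reverse direction handled by contrapositive.

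For ($\Rightarrow$), assume $p$ is definable over $\cM$, with $\cM$-definable predicates $d_p\varphi(\bar y)$ as definitions. Let $q \in \tS_n(B)$ be an heir of $p$, fix $\varphi(\bar x, \bar y)$, $\bar b \in B$, and $\delta > 0$, and approximate $d_p\varphi$ uniformly within $\delta$ by an $\cM$-formula $\chi(\bar y, \bar m)$ with $\bar m \in M$. Applying the heir condition to $\theta(\bar x, \bar b, \bar m) := |\varphi(\bar x, \bar b) - \chi(\bar b, \bar m)|$, for any $\varepsilon > 0$ I get $\bar b' \in M$ with $|\theta(\bar x, \bar b, \bar m)^q - \theta(\bar x, \bar b', \bar m)^q| < \varepsilon$; since the second value equals $|d_p\varphi(\bar b') - \chi(\bar b', \bar m)| < \delta$, we obtain $|\varphi(\bar x, \bar b)^q - \chi(\bar b, \bar m)| < \varepsilon + \delta$, and combining with $|d_p\varphi(\bar b) - \chi(\bar b, \bar m)| < \delta$ yields $|\varphi(\bar x, \bar b)^q - d_p\varphi(\bar b)| < \varepsilon + 2\delta$. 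Letting $\varepsilon, \delta \to 0$ forces $\varphi(\bar x, \bar b)^q = d_p\varphi(\bar b)$, so $q$ is uniquely determined.

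For ($\Leftarrow$), I argue the contrapositive: if $p$ is not definable, I construct two distinct heirs. Non-definability produces a formula $\varphi(\bar x, \bar y)$ such that the function $\bar b \mapsto \varphi(\bar x, \bar b)^p$ on $M^{|\bar y|}$ does not extend to a continuous function on $\tS_{|\bar y|}(M)$; hence there exist $q \in \tS_{|\bar y|}(M)$ and, for $i = 1, 2$, nets $(\bar b^{(i)}_\alpha)_\alpha \subseteq M$ satisfying $\tp(\bar b^{(i)}_\alpha/M) \to q$ while $\varphi(\bar x, \bar b^{(i)}_\alpha)^p \to r_i$ with $r_1 \neq r_2$. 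Fixing a realisation $\bar a \models p$ in some extension, the joint types $t^{(i)}_\alpha := \tp(\bar a, \bar b^{(i)}_\alpha/M) \in \tS_{n+|\bar y|}(M)$ admit, by compactness, convergent subnets with limits $t^{(i)}$ satisfying $t^{(i)}\rest_{\bar x} = p$, $t^{(i)}\rest_{\bar y} = q$, and $\varphi(\bar x, \bar y)^{t^{(i)}} = r_i$. For $\bar b \models q$ in a sufficiently saturated monster $\cM' \succeq \cM$, strong homogeneity yields $\bar a^{(i)}$ with $\tp(\bar a^{(i)}, \bar b/M) = t^{(i)}$; set $\hat t^{(i)} := \tp(\bar a^{(i)}/M\bar b) \in \tS_n(M\bar b)$, which are two distinct types since $\varphi(\bar x, \bar b)^{\hat t^{(i)}} = r_i$.

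The main obstacle is verifying that each $\hat t^{(i)}$ is actually an heir of $p$. For any formula $\psi(\bar x, \bar b, \bar m)$ with $\bar m \in M$ and $\varepsilon > 0$, I compute $\psi(\bar x, \bar b, \bar m)^{\hat t^{(i)}} = \psi(\bar a^{(i)}, \bar b, \bar m) = \psi(\bar x, \bar y, \bar m)^{t^{(i)}}$, and by convergence $t^{(i)}_\alpha \to t^{(i)}$, this equals $\lim_\alpha \psi(\bar x, \bar b^{(i)}_\alpha, \bar m)^p$; taking $\bar b' := \bar b^{(i)}_\alpha$ for $\alpha$ large enough witnesses the heir condition. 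Thus $\hat t^{(1)}, \hat t^{(2)}$ are two distinct heirs of $p$ over $M\bar b$, contradicting uniqueness.
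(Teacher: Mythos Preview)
Your proof is correct. The forward direction is essentially the paper's argument with the approximation of the definable predicate $d_p\varphi$ by a formula made explicit; the paper simply applies the heir condition directly to $|d_p\varphi(\bar y,c) - \varphi(\bar x,\bar y)|$, implicitly using that the heir property passes to definable predicates.

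The reverse direction is genuinely different. The paper proceeds via Beth's definability theorem: it expands $\cM$ by new predicates $D_\varphi(\bar y) = \varphi(\bar x,\bar y)^p$, observes that any elementary extension of this expansion determines an heir of $p$, and hence by uniqueness of heirs the expansion is implicitly (and so explicitly) definable over $\cM$. Your argument is more elementary and self-contained: you identify definability with continuous extendibility of $\bar b \mapsto \varphi(\bar x,\bar b)^p$ to $\tS_{|\bar y|}(M)$, and from a point of discontinuity you extract two limit joint types $t^{(1)}, t^{(2)}$ giving distinct heirs over $M\bar b$. The Beth approach is shorter once that theorem is available and highlights the connection with implicit definability; your approach avoids the external appeal and makes the topological content of the statement transparent.
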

\begin{proof}
  (We follow Poizat \cite[Théorème~11.07]{Poizat:Cours}.)
  For left to right, assume $p$ is definable and let
  $q \in \tS_n(B)$ be an heir of $p$, where $B \supseteq M$.
  Let $\varphi(\bar x,\bar b)$ be a formula over $B$
  and let $d_p\varphi(\bar y,c)$ be the $\varphi$-definition of $p$,
  $c \in M$.
  Assume that $d_p\varphi(\bar b,c) \neq \varphi(\bar x,\bar b)^q$,
  i.e.,
  $|d_p\varphi(\bar b,c) - \varphi(\bar x,\bar b)|^q > 0$.
  Then there is $\bar b' \in M$ such that
  $|d_p\varphi(\bar b',c) - \varphi(\bar x,\bar b')|^q > 0$,
  a contradiction.

  Conversely, assume $p$ admits a unique heir to every structure.
  let $\cL'$ be $\cL$ along with a new predicate symbol
  $D_\varphi(\bar y)$ for each formula $\varphi(\bar x,\bar y)$
  (here $\bar x$ is fixed, $\bar y$ may vary with $\varphi$).
  We define an expansion $\cM'$ of $\cM$ by interpreting
  $D_\varphi(\bar b) = \varphi(\bar x,\bar b)^p$.
  Assume now that $\cN' \succeq_{\cL'} \cM'$ and let
  \begin{gather*}
    \cN = \cN'\rest_\cL,
    \qquad
    q = \bigl\{ \varphi(\bar x,\bar b) = D_\varphi(\bar b) \bigr\}
    _{\bar b \in N}.
  \end{gather*}
  It is not difficult to see that $q(\bar x) \in \tS_n(N)$ is a
  complete, consistent type, and that it is moreover an heir of $p$.
  Since, by assumption, the heir is unique,
  $\cN'$ is the unique expansion of $\cN$ which is an
  elementary extension of $\cM'$.
  In other words, a model of $\Diag(\cM)$ admits at most one expansion
  to a model of $\Diag(\cM')$.
  By Beth's Theorem (see \cite{BenYaacov:NakanoSpaces})
  for each formula $\varphi(\bar x,\bar y)$ there exists an
  $\cM$-definable predicate $d_p\varphi(\bar y)$ such that
  $\Diag(\cM') \vdash D_\varphi = d_p\varphi$.
  In particular,
  $\varphi(\bar x,\bar m)^p = D_\varphi(\bar m) = d_p\varphi(\bar m)$
  for every $\bar m \in M$, and $p$ is definable.
\end{proof}

Notice that for a pair of models $\cM \subseteq \cN$
we could have defined a notion of a $\varphi$-type over a $\cN$ being
an heir of its restriction to $\cM$,
in which case \fref{lem:UniqueHeir} holds, with the same proof, for
local types.

\begin{thm}
  \label{thm:UniqueHeirCoHeirStable}
  The following are equivalent for a theory $T$:
  \begin{enumerate}
  \item The theory $T$ is stable.
  \item Every type over a model has a unique co-heir to any superset.
  \item Every type over a model has a unique heir to any superset.
  \end{enumerate}
\end{thm}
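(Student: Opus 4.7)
The plan is to decompose the three-way equivalence as (1) $\Leftrightarrow$ (3), handled by \fref{lem:UniqueHeir} together with the standard characterization of stability through definability of types over models, and (2) $\Leftrightarrow$ (3), handled by the heir/co-heir swap noted just before \fref{lem:UniqueHeir}. A direct (1) $\Rightarrow$ (2) via \fref{prp:CoHeirNF} and stationarity of types over models in stable theories is also available but strictly speaking redundant.

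For (1) $\Rightarrow$ (3): stability of $T$ means every formula is stable, so by \fref{fct:StabPhiTypeDef} every $\varphi$-type over a model is $M$-definable; assembling $d_p\varphi$ as $\varphi$ ranges over formulas shows that every complete type $p \in \tS_n(M)$ is definable, and \fref{lem:UniqueHeir} delivers a unique heir. For (3) $\Rightarrow$ (1): a unique heir for $p \in \tS_n(M)$ forces $p$ to be definable by \fref{lem:UniqueHeir}, and hence so is each restriction $p\rest_\varphi$. Invoking the local analogue of \fref{lem:UniqueHeir} (flagged in the remark immediately after it) gives that every $\varphi$-type over every model is definable, which is equivalent to stability of each $\varphi$. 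A minor wrinkle to address is the passage from uniqueness of complete heirs to uniqueness of $\varphi$-heirs, which follows by extending any $\varphi$-heir to a full heir.

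For (2) $\Rightarrow$ (3), the substantive step, I use the duality
\[
  \tp(\bar a'/M\bar b)\text{ heir of }\tp(\bar a'/M)
  \iff
  \tp(\bar b/M\bar a')\text{ co-heir of }\tp(\bar b/M),
\]
which is recorded in the text. Given two heirs $q_1, q_2 \in \tS_n(M\bar b)$ of $p = \tp(\bar a/M)$, realised by $\bar a_1, \bar a_2$ in the monster $\fM$, the duality shows that each $\tp(\bar b/M\bar a_i)$ is a co-heir of $r := \tp(\bar b/M)$. Choose $f \in \Aut(\fM/M)$ with $f(\bar a_1) = \bar a_2$ and set $\bar b' = f(\bar b)$; since being a co-heir is preserved under $M$-automorphisms, $\tp(\bar b'/M\bar a_2) = f\bigl(\tp(\bar b/M\bar a_1)\bigr)$ is another co-heir of $r$ over $M\bar a_2$. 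The uniqueness hypothesis then forces $\bar b \equiv_{M\bar a_2} \bar b'$, and applying $f$ to this equivalence yields $\bar a_1 \equiv_{M\bar b} \bar a_2$, i.e.\ $q_1 = q_2$. The converse (3) $\Rightarrow$ (2) is symmetric, exchanging the rôles of $\bar a$ and $\bar b$.

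I expect the bookkeeping around the swap argument — verifying that $\tp(\bar b'/M\bar a_2)$ really is a co-heir of $r$ after transporting along $f$, and carefully pushing the conclusion back through $f$ — to be the main obstacle; every other step reduces to a direct invocation of \fref{lem:UniqueHeir}, \fref{fct:StabPhiTypeDef}, or \fref{prp:CoHeirNF}.
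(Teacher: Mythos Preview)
Your proof is correct; the organisation differs from the paper's but the substantive step is the same. The paper argues cyclically $(1)\Rightarrow(2)\Rightarrow(3)\Rightarrow(1)$: for $(1)\Rightarrow(2)$ it observes that a co-heir over a saturated extension is $M$-invariant and hence must be the unique non-forking extension (this is the route via \fref{prp:CoHeirNF} that you flag as redundant); its $(2)\Rightarrow(3)$ is exactly your swap argument, stated more tersely; and its $(3)\Rightarrow(1)$ is the bare invocation of \fref{lem:UniqueHeir}. You instead do $(1)\Leftrightarrow(3)$ directly from definability and $(2)\Leftrightarrow(3)$ by symmetry in the swap argument. Both are sound, and your $(3)\Rightarrow(2)$ by symmetry is a legitimate shortcut the paper does not take.

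Two small remarks. First, your $(3)\Rightarrow(1)$ takes an unnecessary detour: once every complete type over a model is definable you are done, since that is one of the standard characterisations of stability; there is no need to descend to $\varphi$-heirs or invoke the local version of \fref{lem:UniqueHeir}. Second, in the swap argument the phrase ``applying $f$ to this equivalence'' is slightly imprecise: what you actually use is that $f$ gives $\bar a_1\bar b \equiv_M \bar a_2\bar b'$, which chained with $\bar a_2\bar b' \equiv_M \bar a_2\bar b$ (from the uniqueness step) yields $\bar a_1 \equiv_{M\bar b} \bar a_2$. The step is correct, only the narration could be tightened.
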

\begin{proof}
  \begin{cycprf}
  \item[\impnext]
    Assume $T$ is stable, $\cM \subseteq B$,
    and $q \in \tS_n(B)$ is a co-heir of $p = q\rest_M$.
    Let $\cN \supseteq B$ be saturated over $\cM$ and let
    $q' \in \tS_n(N)$ extend $q$, also a co-heir of $p$.
    Then $q'$ is $M$-invariant and therefore the unique non forking
    extension of $p$ to $N$.
    Thus $q$ is the unique non forking extension of $p$ to $B$.
  \item[\impnext]
    Let $\cM$ be a model, $p \in \tS_n(M)$.
    In order to show that $p$ has a unique heir to every
    $B \supseteq M$ it is enough to consider the case
    $B= M\bar b$ where $\bar b$ is a finite tuple.
    So indeed, assume that $\bar a$ realises an heir of $p$ to
    $M\bar b$.
    Then $\tp(\bar b/M\bar a)$ is a co-heir of $\tp(\bar b/M)$ and by
    assumption it is uniquely determined by
    $\tp(\bar b/M)$ and by $\bar a$.
    It follows that $\tp(\bar a/M\bar b)$ is uniquely determined by
    $\bar b$ and $\tp(\bar a/M)$, as desired.
  \item[\impfirst]
    The assumption and \fref{lem:UniqueHeir}
    yield that every type is definable, so $T$ is stable.
  \end{cycprf}
\end{proof}

Using the local version of \fref{lem:UniqueHeir} alluded to above we
can prove a local version of \fref{thm:UniqueHeirCoHeirStable}, namely
that $\varphi(\bar x,\bar y)$ is stable if and only if every
$\varphi$-type over a model admits a unique co-heir to larger sets if
and only if every $\varphi$-type over models admits a unique heir to
larger models.

\section{Invariant types,
  indiscernible sequences and dividing}

\begin{fct}
  \label{fct:NonSplittingSeq}
  Let $\cM$ be a model saturated over $A \subseteq M$,
  and let $p \in \tS_n(M)$ be $A$-invariant.
  Let
  $(\bar a_n)_{n\in\bN} \subseteq M$
  be a sequence constructed inductively,
  choosing each $\bar a_n$ to realise $p\rest_{A\bar a_{<n}}$.

  Then the sequence $(\bar a_n)_{n\in\bN}$
  is $A$-indiscernible, and its
  type over $A$ depends only on $p$.
\end{fct}
\begin{proof}
  Standard.
\end{proof}

The common type over $A$ of such sequences will be denoted by
$p^{(\omega)}\rest_A$.
For every finite or countable $B \subseteq M$ we may construct
$p^{(\omega)}\rest_{A\cup B}$ just as well.
By a gluing argument,
$p^{(\omega)}
= \bigcup \bigl\{
p^{(\omega)}\rest_{A\cup B} \colon B \in [M]^{\aleph_0}
\bigr\}$
is a complete type of an $M$-indiscernible sequence in $p$, and is of
course $A$-invariant.

\begin{lem}
  \label{lem:NonSplitSeqPhiDef}
  Let $A$ be a set, $\varphi(\bar x,\bar y)$ a stable formula,
  $p \in \tS_\varphi(A)$ a stationary $\varphi$-type.
  Let $\cM \supseteq A$ be saturated over $A$,
  and let $p \subseteq q \in \tS_n(M)$, $q$ invariant
  over $A$.
  Let $(\bar c_n)_{n\in\bN} \models q^{(\omega)}\rest_A$
  be an $A$-indiscernible sequence as constructed in
  \fref{fct:NonSplittingSeq}.

  Then the sequence
  $\bigl( \tilde \varphi^n(\bar y,\bar c_{\leq 2n}) \bigr)_{n\in\bN}$
  converges uniformly to the definition
  $d_p\varphi(\bar y)$ at a rate which only
  depends on $\varphi$.
\end{lem}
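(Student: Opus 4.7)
The plan is to reduce to \fref{fct:StabPhiTypeDef}(iii) via saturation and an automorphism argument.

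\textbf{Step 1: identify $q\rest_\varphi$.} Since $q$ is $A$-invariant and $\Aut(\cM/\acl^{eq}(A)) \subseteq \Aut(\cM/A)$, the restriction $q\rest_\varphi \in \tS_\varphi(\cM)$ is $\acl^{eq}(A)$-invariant, hence by \fref{cor:NonForkingInvariant} does not fork over $A$. Stationarity of $p$ (\fref{prp:NFExtDef}(iii)) forces $q\rest_\varphi = \hat p$, the unique non-forking extension of $p$ to $\cM$; in particular each $\bar c_n$ realises $\hat p\rest_{A\bar c_{<n}}$ as a $\varphi$-type, making $(\bar c_n)$ a Morley sequence of $\hat p$ over $A$ in the sense of \fref{fct:StabPhiTypeDef}(iii). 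Moreover $\hat p$ is itself $A$-invariant (any $\sigma \in \Aut(\cM/A)$ sends $\hat p$ to another non-forking extension of $p$, hence to $\hat p$), so its definition $d_p\varphi$ is $A$-invariant as a predicate on $\cM$.

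\textbf{Step 2: produce a comparable sequence via the fact.} Fix $\varepsilon > 0$ and let $k = k(\varphi, \varepsilon)$. Applying \fref{fct:StabPhiTypeDef}(iii) to $\hat p$ yields a tuple $(\bar d_i)_{i \leq 2k}$ in $\cM$ with each $\bar d_i \models \hat p\rest_{A\bar d_{<i}}$ and $\sup_{\bar y}|d_p\varphi(\bar y) - \tilde\varphi^k(\bar y, \bar d_{\leq 2k})| < \varepsilon$. At each inductive step the constraint $\bar d_i \models \hat p\rest_{A\bar d_{<i}}$ may be strengthened to $\bar d_i \models q\rest_{A\bar d_{<i}}$ — a consistent refinement since $q\rest_\varphi = \hat p$, and realisable in $\cM$ by saturation over $A$ — so we may take $(\bar d_i)$ to also be a Morley sequence of $q$ over $A$, without disturbing the $\varepsilon$-bound.

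\textbf{Step 3: transfer via automorphism.} By \fref{fct:NonSplittingSeq}, both $(\bar c_n)_{n \leq 2k}$ and $(\bar d_n)_{n \leq 2k}$ are initial segments of sequences realising $q^{(\omega)}\rest_A$, hence share the same type over $A$. Saturation of $\cM$ over $A$ yields $\sigma \in \Aut(\cM/A)$ with $\sigma(\bar c_{\leq 2k}) = \bar d_{\leq 2k}$. Using the $A$-invariance of $d_p\varphi$:
\begin{gather*}
|\tilde\varphi^k(\bar y, \bar c_{\leq 2k}) - d_p\varphi(\bar y)|
= |\tilde\varphi^k(\sigma(\bar y), \bar d_{\leq 2k}) - d_p\varphi(\sigma(\bar y))|
< \varepsilon
\end{gather*}
for every $\bar y \in \cM$. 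Since $\sigma$ permutes $\cM$, the supremum over $\bar y$ is also bounded by $\varepsilon$, and elementarity extends the bound to arbitrary $\bar y$. As $k$ depends only on $\varphi$ and $\varepsilon$, the required uniform rate is obtained.

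The main obstacle is that \fref{fct:StabPhiTypeDef}(iii) is an existence statement whose sequence need not match our given $(\bar c_n)$; this is handled in Step 2 by refining the Morley construction so that the resulting $(\bar d_i)$ has the same full type over $A$ as $(\bar c_n)$, which is precisely what makes the automorphism in Step 3 available.
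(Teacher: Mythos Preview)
Your Steps 1--3 match the paper's approach closely: identify $q\rest_\varphi$ with the non-forking extension of $p$, invoke \fref{fct:StabPhiTypeDef} to produce a comparison sequence $(\bar c'_n)_{n\leq 2k}$ built from $q$, and then transfer the $\varepsilon$-bound using $\bar c_{\leq 2k}\equiv_A\bar c'_{\leq 2k}$ together with the $A$-definability of $d_p\varphi$. (The paper phrases the transfer via types rather than an explicit automorphism, but it is the same move.)

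However, there is a genuine gap. What you have established is
\[
\sup_{\bar y}\,\bigl|\tilde\varphi^{k}(\bar y,\bar c_{\leq 2k})-d_p\varphi(\bar y)\bigr|<\varepsilon
\]
at the \emph{single} index $n=k=k(\varphi,\varepsilon)$. Uniform convergence of the sequence requires this bound to hold for \emph{all} $n\geq k$, and your argument does not supply that: \fref{fct:StabPhiTypeDef} asserts nothing about indices $k'>k(\varphi,\varepsilon)$, and the values $k(\varphi,\varepsilon)$ as $\varepsilon$ varies need not cover every natural number. Showing the bound at one index per $\varepsilon$ does not by itself force the full sequence to converge.

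The paper fills this gap with a short combinatorial observation you are missing. By $A$-indiscernibility, the $\varepsilon$-bound you proved for $\bar c_{\leq 2k}$ holds equally for $\tilde\varphi^{k}(\bar y,\bar c_{\in w})$ whenever $w\in[2n+1]^{2k+1}$. Now fix $n>k$ and $\bar b$: among the $2n+1$ real numbers $\varphi(\bar c_i,\bar b)$ one can always select $2k+1$ of them whose median coincides with the median of the full list, so $\tilde\varphi^{n}(\bar b,\bar c_{\leq 2n})=\tilde\varphi^{k}(\bar b,\bar c_{\in w})$ for a suitable $w$ (depending on $\bar b$). This gives $\bigl|\tilde\varphi^{n}(\bar y,\bar c_{\leq 2n})-d_p\varphi(\bar y)\bigr|\leq\varepsilon$ for every $n\geq k$, which is precisely what uniform convergence at a rate depending only on $\varphi$ demands. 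You should add this step after your Step~3.
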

\begin{proof}
  Since $q\rest_\varphi$ is $A$-invariant, it does not fork over
  $A$, so $d_p\varphi(\bar y) = d_q\varphi(\bar y)$.

  Fix $\varepsilon > 0$.
  By \fref{fct:StabPhiTypeDef} there is $k = k(\varphi,\varepsilon)$
  and a sequence
  $(\bar c'_n)_{n \leq 2k} \subseteq M$ such that
  $|d_p\varphi(\bar y) - \tilde \varphi^k(\bar y,\bar c'_{\leq 2k})|
  \leq \varepsilon$,
  and such that furthermore
  $\bar c'_n \models q\rest_{A,\bar c'_{<n}}$.
  By \fref{fct:NonSplittingSeq} we have
  $\bar c_{\leq 2k} \equiv_A \bar c'_{\leq 2k}$.
  In addition, $d_p\varphi$ is over $A$, so
  $|d_p\varphi(\bar y) - \tilde \varphi^k(\bar y,\bar c_{\leq 2k})|
  \leq \varepsilon$.

  Consider now $n > k$.
  First of all, by exactly the same argument as above,
  for every $w \in [2n+1]^{2k+1}$ we have
  $|d_p\varphi(\bar y) - \tilde \varphi^k(\bar y,\bar c_{\in w})|
  \leq \varepsilon$.
  In addition, for any $\bar b$ there exists a subset
  $w \in [2n+1]^{2k+1}$ such that 
  $\tilde \varphi^n(\bar b,\bar c_{\leq 2n}) =
  \tilde \varphi^k(\bar b,\bar c_{\in w})$
  (from any set of $2n+1$ reals one can choose a subset of size
  $2k+1$ with the same median value).
  Thus
  $|d_p\varphi(\bar y) - \tilde \varphi^n(\bar y,\bar c_{\leq 2n})|
  \leq \varepsilon$
  for all
  $n \geq k$, where $k$ depends only on $\varepsilon$ and $\varphi$,
  as desired.
\end{proof}

\begin{prp}
  \label{prp:NonDividing}
  Let $\varphi(\bar x,\bar b)$ be an instance of a stable formula,
  $A$ a set of parameters.
  Then the following are equivalent:
  \begin{enumerate}
  \item The condition $\varphi(\bar x,\bar b) \leq r$ does not fork over $A$.
  \item
    If $(\bar b_n)_{n\in\bN}$ is an $A$-indiscernible sequence,
    $\bar b_0 = \bar b$, then the set of conditions
    $\{\varphi(\bar x,\bar b_n) \leq r\}_{n\in\bN}$ is consistent
    (i.e., the condition
    $\varphi(\bar x,\bar b) \leq r$ does not \emph{divide}
    over $A$).
  \end{enumerate}
\end{prp}
\begin{proof}
  \begin{cycprf}
  \item[\impnext]
    If $(\bar b_n)_{n\in\bN}$ is an $A$-indiscernible sequence and
    $\bar b_0 = \bar b$ then each $\bar b_n$ is an
    $\acl^{eq}(A)$-conjugate of $\bar b$.
  \item[\impfirst]
    Fix models $\cN \succeq \cM \supseteq A$ where $\cN$ is saturated
    over $M$.
    Let $q_0 = \tp(\bar b/\acl^{eq}(A))$.
    By \fref{lem:NFExtExist} there exists
    $q \in \tS_m(M)$ extending $q_0$ such that
    $q\rest_{\tilde \varphi}$ does not fork over $A$,
    i.e., such that
    $d_q \tilde \varphi = d_{q_0} \tilde \varphi$.
    Let $q_1 \in \tS_m(N)$ be an $M$-invariant extension of $q$.
    Finally, let
    $(\bar b_n)_{n\in\bN} \models q_1^{(\omega)}\rest_M$.
    Then $(\bar b_n)_{n\in\bN}$ is an $\cM$-indiscernible sequence,
    and \emph{a fortiori} $A$-indiscernible, in $\tp(\bar b/A)$.
    Thus by assumption there exists $\bar a$ such that
    $\varphi(\bar a,\bar b_n) \leq r$ for all $n$.
    In addition, by \fref{lem:NonSplitSeqPhiDef} we have
    \begin{gather*}
      d_q\tilde \varphi(\bar a)
      =
      \lim_n \med_n \bigl( \varphi(\bar a,\bar b_i) \bigr)_{i\leq 2n}
      \leq r.
    \end{gather*}
    Let $p \in \tS_\varphi(M)$ be a non forking extension
    of $\tp_\varphi(\bar a/\acl^{eq}(A))$.
    Then
    $\varphi(\bar x,\bar b)^p = d_q \tilde \varphi(\bar x)^p \leq r$,
    witnessing that
    $\varphi(\bar x,\bar b) \leq r$ does not fork over $A$,
    as desired.
  \end{cycprf}
\end{proof}

\section{Canonical bases}
\label{sec:CanonicalBases}

Recall that the \emph{canonical base} of a stationary
type $p \in \tS_n(A)$ in a stable theory is 
$\Cb(p)
= \{\Cb(p\rest_\varphi)\colon \varphi(\bar x,\ldots) \in \cL\}$,
namely the set of all
canonical parameters of $\varphi$-definitions of $p$.

\begin{prp}
  \label{prp:CanonicalBase}
  Assume $T$ is stable, and let $p(\bar x) \in \tS_n(A)$ be
  stationary.
  Then:
  \begin{enumerate}
  \item $\Cb(p) \subseteq \dcl^{eq}(A)$.
  \item $p$ does not fork over $\Cb(p)$.
  \item $p\rest_{\Cb(p)}$ is stationary.
  \item $\Cb(p)$ is minimal for the three previous properties,
    meaning that if $B \subseteq \dcl^{eq}(A)$ and $p\rest_B$ is a stationary non
    forking restriction then $\Cb(p) \subseteq \dcl^{eq}(B)$.
  \end{enumerate}
\end{prp}
\begin{proof}
  The first two items are immediate, while the third is by
  \fref{cor:DefinableStationary}.
  Under the assumptions of the fourth we have
  $\Cb(p) = \Cb(p\rest_B) \subseteq \dcl^{eq}(B)$.
\end{proof}

The four properties listed in \fref{prp:CanonicalBase} determine the
canonical base up to inter-definability.
Indeed, if $B$ has all four then $\Cb(p) \subseteq \dcl^{eq}(B)$ but
also $B \subseteq \dcl^{eq}(\Cb(p))$,
whereby $\dcl^{eq}(B)= \dcl^{eq}(\Cb(p))$.
In this case we say that $B$ is \emph{a} canonical base for $p$.

\begin{prp}
  Assume $T$ is stable, and let $p(\bar x) \in \tS_n(A)$ be
  stationary.
  Let $q \in \tS_n(M)$ be the unique non forking extension of $p$,
  where $\cM$ is saturated over $A$.
  Then a (small) set $B \subseteq M$ is a canonical base for $p$ if and
  only if, for every $f \in \Aut(\cM)$:
  $f\rest_B = \id_B \Longleftrightarrow f(q) = q$.
\end{prp}
\begin{proof}
  Let $C = \Cb(p) = \Cb(q)$.
  It follows directly from the definitions that an automorphism of
  $\cM$ fixes $q$ if and only if it fixes $q\rest_\varphi$ for every
  formula $\varphi(\bar x,\ldots)$, if and only if it fixes every
  member of $C$.
  A small set $B$ is another canonical base for $p$ if and
  only if $\dcl^{eq}(B) = \dcl^{eq}(C)$ which is further equivalent to
  $B$ and $C$ being fixed by the same automorphisms.
\end{proof}

We propose an alternative characterisation of canonical bases using
Morley sequences.
In the case of classical first order logic it is more or less
folklore.
Recall that a \emph{Morley sequence} in a (stationary) type
$p(\bar x) \in \tS_m(A)$
is a sequence $I = (\bar a_n)_{n\in\bN}$ of realisations of $p$
which is independent over $A$, i.e.,
such that $\bar a_n \ind_A \bar a_{<n}$ for all $n \in \bN$.
It follows by standard independence calculus that
$\bar a_{\in s} \ind_A \bar a_{\in t}$
for every two disjoint index sets $s,t \subseteq \bN$.
From stationarity of $p$ it follows that the sequence
$I$ is indiscernible over $A$, and its type over
$A$, which we may denote by $p^{(\omega)}$,
is uniquely determined by $p$.

It is not difficult to check that if $p$ satisfies the assumptions of
\fref{fct:NonSplittingSeq} then the definition of $p^{(\omega)}$ which
appears thereafter agrees with the one given here.
In the general case, let $\cM$ be saturated over $A$ and let
$q \in \tS_m(M)$ be the non forking extension of $p$.
Then by construction, $p^{(\omega)} = q^{(\omega)}_A$,
where the first is the type of a Morley sequence as defined here,
and the second the type defined after \fref{fct:NonSplittingSeq}.

\begin{dfn}
  Let $I = (\bar a_n)_{n\in\bN}$ be a sequence of tuples,
  or, for that matter, even of sets.
  Let $I^{\geq k}$ denote the tail $(\bar a_n)_{n \geq k}$.
  We define the \emph{tail definable closure} of $I$ as
  \begin{gather*}
    \tdcl^{eq}(I) =
    \bigcap_{k \in \bN} \dcl^{eq}(I^{\geq k}).
  \end{gather*}
\end{dfn}

It is not difficult to see that for an indiscernible sequence $I$,
$\tdcl^{eq}(I)$ consists precisely of all $c \in \dcl^{eq}(I)$ over
which $I$ is indiscernible.

\begin{lem}
  Let $I = (\bar a_n)_{n\in\bN}$ and $J = (\bar b_n)_{n\in\bN}$
  be indiscernible sequences such that the concatenation
  $I \concat J$ is indiscernible as well.
  Then $\tdcl^{eq}(I) = \tdcl^{eq}(J)$.
  Moreover, every automorphism which sends $I$ to $J$
  necessarily fixes $\tdcl^{eq}(I)$.
\end{lem}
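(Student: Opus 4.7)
The plan is to first establish the characterization hinted at just above the lemma: for an indiscernible sequence $I=(\bar a_n)_{n\in\bN}$, one has $c\in\tdcl^{eq}(I)$ if and only if $c\in\dcl^{eq}(I)$ and $I$ is indiscernible over $c$. In that case there is moreover a single $\emptyset$-definable $g$ such that $c=g(\bar a_{n_1},\bar a_{n_2},\ldots)$ for every strictly increasing (possibly infinite) $n_1<n_2<\ldots$ of the appropriate shape. The substantive direction is that $c\in\tdcl^{eq}(I)$ forces $I$-indiscernibility over $c$: given $\bar a_{\bar m},\bar a_{\bar m'}$ of equal length, pick $k$ larger than all their indices and a representation $c=g_k(\bar a_k,\bar a_{k+1},\ldots)$; the enlarged sequences $\bar a_{\bar m}\concat(\bar a_k,\bar a_{k+1},\ldots)$ and $\bar a_{\bar m'}\concat(\bar a_k,\bar a_{k+1},\ldots)$ are strictly increasing in $I$ and share a type by $I$-indiscernibility, so applying the substitution $\varphi(\bar y,g_k(\bar y'))$ gives $\varphi(\bar a_{\bar m},c)=\varphi(\bar a_{\bar m'},c)$. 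Uniformity of $g$ then follows by fixing any single representation and using $I$-indiscernibility over $c$.

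With this in hand, assume $c\in\tdcl^{eq}(I)$ with $c=g(\bar a_{n_1},\bar a_{n_2},\ldots)$ uniformly, and verify both clauses for $J$. For definability: fix an increasing $\bar n'$ of indices $\geq k$ in $J$, an increasing $\bar n$ of the same shape in $I$, and a far-out increasing $\bar m$ in $I$. Then $\bar a_{\bar n}\concat\bar b_{\bar n'}$ and $\bar a_{\bar n}\concat\bar a_{\bar m}$ are both strictly increasing subsequences of $I\concat J$ and hence share a type by $I\concat J$-indiscernibility. The equation $g(\bar y)=g(\bar y')$ holds on the second (each block defines $c$), hence on the first, so $g(\bar b_{\bar n'})=c$ and therefore $c\in\dcl^{eq}(J^{\geq k})$. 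For indiscernibility over $c$: given increasing $\bar b_{\bar l},\bar b_{\bar l'}$ of equal length, prepend some $\bar a_{\bar n}$ with $c=g(\bar a_{\bar n})$; the tuples $\bar a_{\bar n}\concat\bar b_{\bar l}$ and $\bar a_{\bar n}\concat\bar b_{\bar l'}$ are strictly increasing in $I\concat J$ and share a type, and the predicate $\chi(\bar y',g(\bar y))$ translates this to $\chi(\bar b_{\bar l},c)=\chi(\bar b_{\bar l'},c)$ for every formula $\chi$.

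Combining the two clauses gives $c\in\tdcl^{eq}(J)$, and since the hypotheses on $I$, $J$ and $I\concat J$ are symmetric in the two sequences, the reverse inclusion follows identically. For the moreover part, any $\sigma\in\Aut(\fM)$ sending $I$ to $J$ term-by-term and any $c\in\tdcl^{eq}(I)$ satisfy $\sigma(c)=g(\bar b_{n_1},\bar b_{n_2},\ldots)=c$, the last equality being the definability clause just proved. The only real obstacle throughout is the standard one of promoting equality of \emph{types} (which $I\concat J$-indiscernibility directly provides for strictly increasing subsequences) to equality of \emph{values} of $c$; the uniform device is to always pair the tuple under consideration with a defining block for $c$, so that a single $\emptyset$-definable substitution of the form $\chi(\ldots,g(\ldots))$ transfers the exact value of $c$ across the indiscernibility comparison.
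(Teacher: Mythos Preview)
Your argument is correct and proceeds by a different, more explicit route than the paper's. The paper never unpacks the characterisation of $\tdcl^{eq}$ via indiscernibility over $c$ or works with an explicit defining function $g$. Instead it fixes $c \in \tdcl^{eq}(J)$ and, for each $k$, considers the hybrid sequence $J_k = \bar a_0,\ldots,\bar a_{k-1},\bar b_k,\bar b_{k+1},\ldots$; since $I\concat J$ is indiscernible, $J_k$ has the same type as $J$, and any automorphism $f_k$ taking $J$ to $J_k$ fixes $J^{\geq k}$ and hence $c$. This yields $cJ \equiv cJ_k$ for all $k$, so $cI \equiv cJ$; then any $f$ with $f(I) = J$ satisfies $f(c)J \equiv cI \equiv cJ$, forcing $f(c) = c$, and applying $f^{-1}$ gives $\tdcl^{eq}(I) = \tdcl^{eq}(J)$. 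The paper's approach is shorter and avoids opening up $\dcl^{eq}$; yours is more hands-on and makes the uniform representation $c = g(\cdot)$ explicit.

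One point needs adjusting: your appeal to symmetry for the reverse inclusion is not valid as stated. The hypothesis is that $I\concat J$ is indiscernible, not $J\concat I$, so the roles of $I$ and $J$ are \emph{not} interchangeable and the argument does not ``follow identically''. The reverse inclusion does go through by the same method, but the witnessing tuples must be arranged differently: for $c = g(\bar b_{\bar n}) \in \tdcl^{eq}(J)$ and $\bar a_{\bar n'}$ in $I^{\geq k}$, compare $\bar a_{\bar n'}\concat \bar b_{\bar m'}$ with $\bar b_{\bar m}\concat \bar b_{\bar m'}$ (both increasing in $I\concat J$, the second giving $g(\bar b_{\bar m}) = g(\bar b_{\bar m'}) = c$) to obtain $g(\bar a_{\bar n'}) = c$; and for indiscernibility of $I$ over $c$, \emph{append} a $J$-block defining $c$ rather than prepending an $I$-block. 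So ``follows analogously'' is accurate, ``follows identically by symmetry'' is not.
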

\begin{proof}
  For $k \in \bN$ let
  $J_k$ be the sequence
  $\bar a_0,\ldots,\bar a_{k-1},\bar b_k,\bar b_{k+1},\ldots$,
  namely the sequence obtained by replacing the first $k$ elements of
  $J$ with the corresponding elements from $I$.
  Since $I \concat J$ is indiscernible so is $J_k$ for each $k$,
  and there exists an automorphism $f_k$ sending $J \mapsto J_k$.
  Now let $c \in \tdcl^{eq}(J)$.
  Since $c$ is definable over $J^{\geq k}$
  it is fixed by $f_k$, so $cJ \equiv  cJ_k$.
  This holds for all $k$, whence $cI \equiv cJ$.

  Fix an automorphism $f$ which sends $I$ to $J$
  (which must necessarily exist).
  Then $f(c)J \equiv cI \equiv cJ$, so $f(c) = c$.
  Thus $f$ fixes $\tdcl^{eq}(J)$.
  Applying $f^{-1}$ we obtain that
  $\tdcl^{eq}(I) = \tdcl^{eq}(J)$, as desired.
\end{proof}

\begin{thm}
  \label{thm:TailCB}
  Let $p \in \tS_m(A)$ be a stationary type and let
  $I = (\bar a_n)_{n\in\bN}$ be a Morley sequence in $p$.
  Then $\tdcl^{eq}(I)$ is a canonical base of $p$.
\end{thm}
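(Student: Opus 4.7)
The plan is to show that $\tdcl^{eq}(I)$ is interdefinable with $\Cb(p)$; by the discussion following \fref{prp:CanonicalBase}, this will suffice to exhibit $\tdcl^{eq}(I)$ as a canonical base of $p$.

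For the inclusion $\Cb(p) \subseteq \tdcl^{eq}(I)$, fix a stable formula $\varphi(\bar x,\bar y)$ and let $q$ denote the unique non forking extension of $p$ to a model saturated over $A$. Since $p$ is stationary, each tail $I^{\geq k}$ is a sequence of the form produced by \fref{fct:NonSplittingSeq} applied to $q$, so \fref{lem:NonSplitSeqPhiDef} yields that $d_p\varphi(\bar y)$ is the uniform limit of $\tilde\varphi^n(\bar y,\bar a_k,\dotsc,\bar a_{k+2n})$. Hence the canonical parameter of $d_p\varphi$ lies in $\dcl^{eq}(I^{\geq k})$ for every $k$, and consequently in $\tdcl^{eq}(I)$. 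Varying $\varphi$ gives $\Cb(p) \subseteq \tdcl^{eq}(I)$.

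The harder inclusion is $\tdcl^{eq}(I) \subseteq \dcl^{eq}(\Cb(p))$. The approach is to show that every monster automorphism $f$ fixing $\Cb(p)$ pointwise also fixes any chosen $c \in \tdcl^{eq}(I)$. The starting observation is that since $p$ is the non forking extension to $A$ of the stationary type $p\rest_{\Cb(p)}$, the Morley sequence $I$ in $p$ over $A$ is simultaneously a Morley sequence in $p\rest_{\Cb(p)}$ over $\Cb(p)$, and the same holds for $f(I)$ since $f$ fixes $\Cb(p)$. I would then construct an auxiliary Morley sequence $K$ in $p\rest_{\Cb(p)}$ over $\Cb(p) \cup I \cup f(I)$; both $I\concat K$ and $f(I)\concat K$ are then Morley in $p\rest_{\Cb(p)}$ over $\Cb(p)$, hence indiscernible, and the preceding lemma yields $\tdcl^{eq}(I) = \tdcl^{eq}(K) = \tdcl^{eq}(f(I))$.

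The main obstacle---turning these equalities of closures into the pointwise statement $f(c) = c$---is handled by picking automorphisms $g,h$ fixing $\Cb(p)$ with $g(I) = K$ and $h(K) = f(I)$; these exist by stationarity of $p\rest_{\Cb(p)}$, and the preceding lemma forces each to fix $\tdcl^{eq}(I)$ pointwise, so $g(c) = h(c) = c$. Consequently $hg$ sends $I$ to $f(I)$ while fixing $c$, whence $f^{-1}hg$ fixes $I$ pointwise and therefore fixes $c$ (since $c \in \dcl^{eq}(I)$), yielding $f(c) = c$. Combined with the first inclusion, this gives $\dcl^{eq}(\tdcl^{eq}(I)) = \dcl^{eq}(\Cb(p))$ and completes the proof.
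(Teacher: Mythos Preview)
Your proof is correct and follows essentially the same approach as the paper's: both directions use the same ingredients (the preceding lemma on concatenated indiscernible sequences, \fref{lem:NonSplitSeqPhiDef} for the easy inclusion, and an auxiliary Morley sequence $K$ independent from $I$ and $f(I)$ for the hard one). The only cosmetic differences are that the paper first reduces to the case $A = \Cb(p)$ rather than carrying $p\rest_{\Cb(p)}$ throughout, and that the paper decomposes $f = h \circ g$ directly, whereas you pick $g,h$ separately and then observe that $f^{-1}hg$ fixes $I$; these are equivalent manoeuvres.
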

\begin{proof}
  First of all, we have seen that $p\rest_{\Cb(p)}$ is
  stationary, with the same canonical base as $p$.
  It is also not difficult to check that a Morley sequence in $p$ is
  also a Morley sequence in $p\rest_{\Cb(p)}$.
  It is therefore enough to prove for $p\rest_{\Cb(p)}$, i.e.,
  we may assume that $A = \Cb(p)$.

  So let $\cM$ be saturated over $A = \Cb(p)$
  and let $q \in \tS_m(M)$ be the non
  forking extension of $p$.
  As pointed above,
  $I \models p^{(\omega)} = q^{(\omega)}_A$.
  By \fref{lem:NonSplitSeqPhiDef} $p$ is definable over $I$, so
  $\Cb(p) \subseteq \dcl^{eq}(I)$.
  Also, every tail of a Morley sequence is a Morley sequence,
  whence $\Cb(p) \subseteq \tdcl^{eq}(I)$.

  Conversely, let $f$ be an automorphism fixing $A = \Cb(p)$.
  Then $f$ fixes $p$ and therefore sends $I$ to another Morley
  sequence in $p$, say $J$.
  Let $K$ be a third Morley sequence in $p$, $K \ind_A I,J$.
  Then both $I \concat K$ and $J \concat K$ can be verifies to be
  Morley sequences in $p$ (of length $\omega + \omega$), and in
  particular indiscernible.
  We can decompose $f = h \circ g$
  where $g(I) = K$ and $h(K) = J$.
  By the Lemma
  $\tdcl^{eq}(I) = \tdcl^{eq}(K) = \tdcl^{eq}(J)$
  and this set is fixed by $g$, $h$ and therefore by $f$.
  Thus $\tdcl^{eq}(I) \subseteq \dcl\bigl(\Cb(p)\bigr)$, and the proof
  is complete.
\end{proof}

It is also a fact, which we shall not prove here (but is proved as in
classical logic), that in a stable theory every indiscernible sequence
$I = (\bar a_n)_{n\in\bN}$ is a Morley sequence in some type, say $q$.
Let $A = \tdcl^{eq}(I)$ and
$p = \tp(\bar a_n/A)$, which does not depend on $n$.
By the Theorem, $A = \Cb(q) = \Cb(p)$ and $I$ is a Morley sequence in
$p$.

In the case of probability theory this is a well known fact.
Indeed, in probability algebras or in spaces of random variables
(say $[0,1]$-valued, see \cite{BenYaacov:RandomVariables}),
the canonical base of a type
(in the real sort) can be represented by a set of real elements, so
there is no need to consider imaginaries.
Then \fref{thm:TailCB} tells us
that if $(X_n)_{n\in\bN}$ is sequence of random variables
which is indiscernible (i.e., exchangeable)
and $\sA$ is its tail algebra then
the sequence $(X_n)_{n\in\bN}$ is i.i.d.\ over $\sA$,
meaning that the random variables $X_n$ are independent over $\sA$
and have the same conditional distribution over $\sA$.

\begin{cor}
  Assume $T$ is stable, and let $p(\bar x) \in \tS_m(A)$ be stationary.
  Let $I = (\bar a_n)_{n\in\bN}$ be a Morley sequence in $p$,
  $J = I \setminus \bar a_0$.
  Then $\bar a_0 \ind_A J$ and $\bar a_0 \ind_J A$.
\end{cor}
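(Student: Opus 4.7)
The plan is to derive the first assertion directly from the definition of a Morley sequence, and to derive the second assertion from \fref{thm:TailCB} applied to the tail sequence $J$.

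For $\bar a_0 \ind_A J$: this is exactly a case of the independence between disjoint index subsets of a Morley sequence noted in the paragraph introducing $p^{(\omega)}$. Taking $s = \{0\}$ and $t = \bN \setminus \{0\}$, we get $\bar a_0 \ind_A \bar a_{\geq 1} = J$.

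For $\bar a_0 \ind_J A$: the key observation is that $J = (\bar a_n)_{n \geq 1}$ is itself a Morley sequence in $p$ (a reindexing of the tail $I^{\geq 1}$), so by \fref{thm:TailCB}, $\tdcl^{eq}(J)$ is a canonical base of $p$; in particular, $\Cb(p) \subseteq \dcl^{eq}(J)$. Now $\tp(\bar a_0/AJ)$ is a non forking extension of $p = \tp(\bar a_0/A)$ over $AJ$, by the first part combined with symmetry; since $p$ is stationary, \fref{prp:NFExtDef}.\fref{item:NFExtDefStationary} tells us that $\tp(\bar a_0/AJ)$ is the unique such extension and has $\Cb(p)$ as its canonical base.

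It remains to conclude that $\tp(\bar a_0/AJ)$ does not fork over $J$. But its canonical base $\Cb(p)$ sits in $\dcl^{eq}(J) \subseteq \acl^{eq}(J)$, so each $\varphi$-restriction of $\tp(\bar a_0/AJ)$ is definable over $\acl^{eq}(J)$, which by definition means that $\tp(\bar a_0/AJ)$ does not fork over $J$, i.e., $\bar a_0 \ind_J A$. The main technical point is recognising $J$ as a Morley sequence in $p$ and invoking \fref{thm:TailCB}; everything else is formal manipulation of canonical bases and the uniqueness of non forking extensions.
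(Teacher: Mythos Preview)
Your proof is correct and follows essentially the same approach as the paper: both derive the first independence directly from the Morley sequence definition, then use that $J$ is itself a Morley sequence in $p$ together with \fref{thm:TailCB} to obtain $\Cb(p) \subseteq \dcl^{eq}(J)$, from which the second independence follows. The paper phrases the last step as passing through $\bar a_0 \ind_{\Cb(p)} AJ$ and then moving the base into $J$, while you argue via the canonical base of $\tp(\bar a_0/AJ)$ lying in $\dcl^{eq}(J)$; these are the same inference.
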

\begin{proof}
  The first independence is immediate and implies
  $\bar a_0 \ind_{\Cb(p)} AJ$.
  By \fref{thm:TailCB}  we have
  $\Cb(p) \subseteq \dcl^{eq}(J)$ and
  the second independence follows.
\end{proof}

\section{Stable type-definable groups and their actions}

We turn to consider groups, and more generally, homogeneous spaces,
which are definable or type-definable in a stable theory.

\subsection{Generic elements and types in stable group actions}

Let $\langle G,S\rangle$ be a homogeneous space, type-definable in models of a
stable theory $T$.
This is to say that $G$ is a
type-definable group and $S$ a type-definable set, equipped with a
type-definable (and therefore definable) transitive
group action $G \times S \to S$.
For convenience let us assume that both are defined without
parameters.
We shall identify $G$ and $S$ with their sets of realisations in a
monster model $\fM$.
We are particularly interested in the case where $S = G$ where $G$
acts on itself either on the left $(g,h) \mapsto gh$ or on the right
$(g,h) \mapsto hg^{-1}$.

Given a partial type $\pi(x)$ in the sort of $S$ we
let $\pi(S)$ denote the subset of $S$ defined by $\pi$.

\begin{dfn}
  \begin{enumerate}
  \item 
    A \emph{generic set} in $S$ is a subset $X \subseteq S$
    finitely many $G$-translates of which cover $S$.
  \item
    A \emph{generic partial type} in $S$ is a partial type $\pi(x)$
    such that every logical neighbourhood of $\pi$
    (as per \fref{dfn:LogNeighb}) defines in $S$ a generic set.
    Single conditions as well as complete types are generic if they
    are generic as partial types.
  \item We say that an element $s \in S$ is \emph{generic} over a set
    $A$ if $\tp(s/A)$ is generic.
  \item
    A \emph{left-generic set} in $G$ is a subset $X \subseteq G$
    which is generic under the
    action of $G$ on itself on the left.
    We define partial types
    in the sort of $G$ to be
    left-generic accordingly.
    Similarly for right-generic.
  \end{enumerate}
\end{dfn}

Let $\pi(x)$ be a partial type.
Clearly, if $\pi(S)$ is a generic set then $\pi$ is a generic partial
type, but the converse is not always true.
In classical logic, if $\pi$ consists of a single formula
(i.e., if $\pi(S)$ is a relatively definable subset of $S$, and so is
its complement),
then $\pi$ is its own logical neighbourhood and the two notions
coincide.
Unfortunately, this will generally never happen in continuous logic
(except for $\pi(S) = S$ or $\pi(S) = \emptyset$).

\begin{lem}
  \label{lem:GenPartialType}
  The following are equivalent for a partial type $\pi(x)$
  in the sort of $S$, with parameters in a set $A$:
  \begin{enumerate}
  \item The partial type $\pi$ is generic in $S$.
  \item For every formula $\varphi(x,\bar a)$ over $A$,
    if the condition $\varphi(x,\bar a) = 0$ is a logical
    neighbourhood of $\pi$ then it is a generic condition.
  \end{enumerate}
\end{lem}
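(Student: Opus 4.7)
The plan is to derive (i)$\Rightarrow$(ii) directly from the transitivity of the logical-neighbourhood relation $<$, and (ii)$\Rightarrow$(i) by using \fref{lem:LogNeighb}(ii) to intercalate a zero set of a formula over $A$ between $\pi$ and any given logical neighbourhood.

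For (i)$\Rightarrow$(ii), I would assume $\pi$ is generic and suppose that the condition $\varphi(x,\bar a)=0$ is a logical neighbourhood of $\pi$. To check that this condition is itself generic as a partial type, let $Y$ be any logical neighbourhood of its zero set $Z=\{\varphi(x,\bar a)=0\}$. Then $\pi<Z<Y$, and composing the interior inclusions in a common type space gives $\pi<Y$. Hence by (i) the set $Y$ defines a generic subset of $S$, which is exactly what it means for the condition $\varphi(x,\bar a)=0$ to be generic.

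For (ii)$\Rightarrow$(i), let $Y$ be any logical neighbourhood of $\pi$. I would apply \fref{lem:LogNeighb}(ii) with $X=\pi(S)$ (type-definable over $A$) to produce an intermediate logical neighbourhood $\pi<Z<Y$ where $Z$ is the zero set of a formula $\varphi(x,\bar a)=0$ with $\bar a\in A$. In particular, this condition is a logical neighbourhood of $\pi$ over $A$, so by (ii) it is generic. By the definition of genericity for a condition, every logical neighbourhood of $Z$ defines a generic subset of $S$; since $Z<Y$, applying this to $Y$ shows $Y$ is generic, and therefore $\pi$ is a generic partial type.

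There is no real obstacle here: the whole argument is essentially a direct appeal to \fref{lem:LogNeighb}(ii), which is precisely the tool that bridges the gap between testing all logical neighbourhoods (as in (i)) and testing only zero sets of formulae over $A$ (as in (ii)). The only mild subtlety is keeping track of ambient parameter sets when speaking of interiors in type spaces, but the first part of \fref{lem:LogNeighb} already ensures that the choice of $A$ does not affect the relation $<$.
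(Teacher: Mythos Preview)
Your proposal is correct and matches the paper's own proof, which is simply the one-liner ``One direction is immediate, the other follows from \fref{lem:LogNeighb}.'' Your (i)$\Rightarrow$(ii) is precisely the ``immediate'' direction via transitivity of $<$, and your (ii)$\Rightarrow$(i) is exactly the intended appeal to \fref{lem:LogNeighb}(ii) to interpolate a zero set of a formula over $A$.
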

\begin{proof}
  One direction is immediate, the other follow from
  \fref{lem:LogNeighb}.
\end{proof}

Let $\tS_S(A)$ denote the set of all complete types over
$A$ implying $x \in S$.
Equipped with the induced topology from $\tS_x(A)$,
it is a compact space, and the set of all generic complete
types over $A$ is closed.
Closed subsets of $\tS_S(A)$ are in bijection with partial types over
$A$ implying $x \in S$, i.e., with type-definable subsets of $S$ using
parameters in $A$.
If $X,Y \subseteq S$ are two such sets, say that $Y$ is a logical
neighbourhood of $X$ \emph{relative to $S$}, in symbols $Y >^S X$, if
$[X] \subseteq [Y]^\circ$ where the interior is calculated in $\tS_S(A)$.
This is equivalent to saying that there exists a true logical
neighbourhood $Y' > X$ such that $Y = Y' \cap S$.
Thus a type-definable set $X \subseteq S$ is defined by a generic
partial type in $S$ if and only if
every relative logical neighbourhood of $X$ in $S$ defines a generic
set.

For $g \in G$ and $X \subseteq S$,
let $L_g[X] = gX = \{gs\}_{s \in X}$.
Somewhat superfluously, we may also define
$L_g^{-1}[X] = \{s \in S\colon gs \in X\} = L_{g^{-1}}[X]$.

\begin{lem}
  \label{lem:GenericTypeLeftAction}
  Let $A$ be a set of parameters,
  $g \in G(A) = G \cap \dcl(A)$.
  \begin{enumerate}
  \item
    If $X \subseteq S$ is type-definable over $A$, say by a partial
    type $\pi$, then $L_g[X]$ is also type-definable over $A$ by a
    partial type which will be denoted $L_g\pi$ (or $g\pi$).
    Moreover, $\pi$ is generic if and only if $g\pi$ is.
  \item
    If $p = \tp(s/A) \in \tS_S(A)$ is a complete type
    then $L_gp = gp = \tp(gp/A)$,
    and $L_g\colon \tS_S(A) \to \tS_S(A)$
    is a homeomorphism, and restricts to a homeomorphism of the set of
    generic types with itself.
  \end{enumerate}
\end{lem}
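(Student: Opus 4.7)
The whole argument rests on the fact that, since $g \in G \cap \dcl(A)$, left multiplication $L_g\colon S \to S$ is an $A$-definable bijection whose inverse is the $A$-definable map $L_{g^{-1}}$. (The action is type-definable, and type-definable functions are definable, so I may treat $L_g$ as a genuine definable function and substitute freely.)

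For the first assertion of (i), I would define $L_g\pi$ by substitution: if $\pi(x) = \{\varphi_i(x,\bar a_i) \leq r_i\}_i$ with $\bar a_i$ in $A$, then put $L_g\pi(x) = \{\varphi_i(g^{-1}x,\bar a_i) \leq r_i\}_i$. This is a partial type over $A$ whose realisation set in $S$ is exactly $L_g[X]$, settling type-definability. For the ``moreover'' clause, $L_{g^{-1}}$ induces a self-homeomorphism of $\tS_S(A')$ for every $A' \supseteq A$, taking the closed set $[Y]$ to $[L_{g^{-1}}Y]$ and its interior to the interior of the latter; combined with \fref{lem:LogNeighb} this shows that $Y$ is a logical neighbourhood of $g\pi$ if and only if $L_{g^{-1}}Y$ is a logical neighbourhood of $\pi$. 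Finally, genericity is preserved by conjugation in $G$: a finite cover $S = \bigcup_i h_i \cdot L_{g^{-1}}Y(S)$ yields $S = gS = \bigcup_i (gh_ig^{-1}) \cdot Y(S)$, and conversely.

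For (ii), $L_g$ descends to a well-defined bijection of $\tS_S(A)$ since $g \in \dcl(A)$, with inverse $L_{g^{-1}}$. Continuity is immediate: the preimage under $L_g$ of a basic open set $\{q : \varphi(x,\bar a)^q < r\}$ with $\bar a$ in $A$ is $\{p : \varphi(gx,\bar a)^p < r\}$, again an $A$-definable basic open set; since the inverse has the same form, $L_g$ is a homeomorphism of $\tS_S(A)$. By (i) both $L_g$ and $L_{g^{-1}}$ preserve genericity of complete types, so the restriction of $L_g$ to the closed subspace of generic types is a self-homeomorphism.

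The only delicate bookkeeping point is the claim that $L_{g^{-1}}$ acts as a homeomorphism on the relevant Stone spaces in a way compatible with taking interiors and closures; this reduces to the fact that an $A$-definable bijection with $A$-definable inverse induces, by type pullback, continuous maps on $\tS_S(A')$ that are mutually inverse. There is no deeper content in the lemma: once this formal point is pinned down, everything is symmetry of the group action.
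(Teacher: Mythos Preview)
Your argument is correct and follows essentially the same route as the paper's proof, which only spells out the genericity clause: if $Y >^S gX$ then $g^{-1}Y >^S X$, so $g^{-1}Y$ is a generic set, hence so is $Y$. Your conjugation step $S = gS = \bigcup_i (gh_ig^{-1})\cdot Y(S)$ is correct but slightly roundabout; since $h_i g^{-1}Y$ is already a $G$-translate of $Y$, no left multiplication by $g$ is needed.
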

\begin{proof}
  We only prove the parts regarding genericity.
  Indeed, assume that $\pi$ is generic, and let
  $gX <^S Y$.
  Then $X <^S L_g^{-1}[Y]$, so $L_g^{-1}[Y]$ is a generic subset of
  $S$.
  It follows immediately that so is $Y$.
  Thus $g\pi$ is a generic partial type.
  For the converse replace $g$ with $g^{-1}$.
\end{proof}

Similarly, for $s \in S$ and $X \subseteq G$ we define
$R_s[X] = Xs = \{gs\}_{g \in X}$.
For $X \subseteq S$ we define
$R_s^{-1}[X] = \{g \in G\colon gs \in X\}$.

\begin{lem}
  \label{lem:GenericTypeRightAction}
  Let $A$ be a set of parameters,
  $s \in S(A) = S \cap \dcl(A)$.
  \begin{enumerate}
  \item
    If $X \subseteq G$ is type-definable over $A$, say by a partial
    type $\pi$, then $R_s[X]$ is also type-definable over $A$ by a
    partial type which will be denoted $R_s\pi$ (or $\pi s$).
    Moreover, if $\pi$ is left-generic then $R_s\pi$ is generic.
  \item
    If $p = \tp(g/A) \in \tS_G(A)$ is a complete type
    then $R_sp = ps = \tp(gs/A)$,
    and $R_s\colon \tS_G(A) \to \tS_S(A)$
    is a continuous surjection, sending left-generic types to generic
    types.
  \end{enumerate}
  Notice that we do not claim that every generic type in $\tS_S(A)$ is
  the image under $R_s$ of a left-generic type in $\tS_G(A)$ (this is
  true if $T$ is stable).
\end{lem}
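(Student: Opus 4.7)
My plan is to route everything through the induced map on type spaces. Since the action $G \times S \to S$ is a definable function and $s \in \dcl(A)$, the assignment $g \mapsto gs$ is an $A$-definable function $\rho_s \colon G \to S$. Composing with any $A$-definable predicate on $S$ produces an $A$-definable predicate on $G$, so $\rho_s$ induces a continuous map on type spaces, which is precisely the map $R_s \colon \tS_G(A) \to \tS_S(A)$ sending $\tp(g/A) \mapsto \tp(gs/A)$; this already delivers the identity $R_s p = \tp(gs/A)$ in part 2.

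For the type-definability in part 1, the image $R_s\bigl([X]\bigr) \subseteq \tS_S(A)$ is a continuous image of a compact set in a Hausdorff space, hence closed, so it corresponds to a type-definable subset of $S$; working in a saturated extension and using transitivity to realise points identifies this subset with $Xs$. I then take $R_s\pi$ to be any partial type over $A$ defining it. Surjectivity of $R_s$ on type spaces is immediate from transitivity: given $q \in \tS_S(A)$ and a realisation $y \models q$ in a large model, pick $g \in G$ with $gs = y$, and then $\tp(g/A)$ maps to $q$.

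The substantive point is to show that left-generic $\pi$ yields generic $R_s\pi$, which also handles the final clause of part 2. Let $X = \pi(G)$, and suppose $Y$ is a relative logical neighbourhood of $Xs$ in $S$, taken over $A$ by \fref{lem:LogNeighb}, so that $[Xs] \subseteq [Y]^\circ$ in $\tS_S(A)$. The preimage $R_s^{-1}[Y] = \{g \in G : gs \in Y\}$ is type-definable over $A$, and its closed set in $\tS_G(A)$ is exactly $R_s^{-1}\bigl([Y]\bigr)$. Continuity of $R_s$ gives
\begin{gather*}
  [X] \subseteq R_s^{-1}\bigl([Xs]\bigr)
  \subseteq R_s^{-1}\bigl([Y]^\circ\bigr)
  \subseteq R_s^{-1}\bigl([Y]\bigr)^\circ
  = \bigl[R_s^{-1}[Y]\bigr]^\circ,
\end{gather*}
exhibiting $R_s^{-1}[Y]$ as a logical neighbourhood of $X$ in $G$. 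By left-genericity of $\pi$, there exist $g_1,\ldots,g_n \in G$ with $G = \bigcup_i g_i \cdot R_s^{-1}[Y]$; applying $\rho_s$ and using $Gs = S$ yields $S = \bigcup_i g_i Y$, so $Y$ is generic. The only real technicality will be keeping straight the distinction between relative logical neighbourhoods inside $S$ and honest ones inside $G$, which is managed cleanly by the continuity of the pullback $R_s$ between the appropriate type spaces.
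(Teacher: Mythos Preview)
Your argument is correct and follows essentially the same route as the paper, which simply defers to the proof of \fref{lem:GenericTypeLeftAction}: pull back a relative logical neighbourhood of $Xs$ along the continuous map $R_s$ to obtain a relative logical neighbourhood of $X$ in $G$, invoke left-genericity to cover $G$ by finitely many translates, and push forward using transitivity to cover $S$. You supply more detail than the paper does (the explicit type-space map, the compactness argument for type-definability of $Xs$, and the surjectivity via transitivity), but the substance is the same.
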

\begin{proof}
  Essentially identical to that of \fref{lem:GenericTypeLeftAction}.
\end{proof}

Under the assumption that the theory $T = \Th(\fM)$ is stable we shall
show that generic types exist and study some of their properties.
We follow a path similar to that followed in
\cite{Pillay:GeometricStability}.
Toward this end we construct
an auxiliary  multi-sorted structure
$\hat \fM = \langle G,S,\ldots\rangle$
in a language $\hat \cL$
(in addition to sorts $G$ and $S$, $\hat \cL$ consists of additional
sorts which we shall described later).
We define the distance on the first two sorts by
\begin{gather*}
  d_G^{\hat \fM}(g,g') = \sup_{h \in G} \, d^\fM(hg,hg'),
  \qquad
  d_S^{\hat \fM}(s,s') = \sup_{h \in G} \, d^\fM(hs,hs').
\end{gather*}
This coincides with the original distance in $\fM$ if the latter is
invariant under the action of $G$ (on the left).
In any case, $d_G^{\hat \fM}$ is a distance function, invariant under
the action of $G$, and satisfies $d_G^{\hat \fM} \geq d^\fM$.
On the other hand, if $g_n \to g$ in $d^\fM$
then $g_n \to g$ in $d_G^{\hat \fM}$ as well
(if not, then by a compactness argument,
for some $\varepsilon > 0$ there would exist $h \in G$
such that $d^\fM(hg,hg) \geq \varepsilon$, an absurd).
It follows that $(G,d_G^{\hat \fM})$ is a complete metric space.
The same observations hold for $(S,d_S^{\hat \fM})$.

Let now $\Phi_S$ consist of all $\cL$-formulae of the form
$\varphi(x,\bar y)$ where $x$ is in the sort of $S$.
For each $\varphi \in \Phi_S$, there will be a sort
$C_\varphi$, consisting of all canonical parameters of instances of
$\varphi$ in $\fM$.
The canonical parameter of $\varphi(x,\bar b)$ will be denote $[\bar b]_\varphi$,
or $[\bar b]$ if there is no ambiguity.
We put on it the standard metric, namely
\begin{gather*}
  d_\varphi([\bar b]_\varphi,[\bar b']_\varphi)
  =
  \sup_{a \in \fM}\,
  |\varphi(a,\bar b)-\varphi(a,\bar b')|.
\end{gather*}
The only symbols in the language $\hat \cL$, in addition to the
distance symbols of the various sorts, are a predicate symbol
$\hat \varphi(x_S,y_G,z_\varphi)$ for each formula $\varphi \in \Phi_S$,
interpreted by
\begin{gather*}
  \hat \varphi(s,g,[\bar b])^{\hat \fM}
  =
  \varphi(g^{-1}s,\bar b)^\fM.
\end{gather*}
Since $\varphi$ is uniformly continuous in all its variables,
so is $\hat \varphi$.
These definitions make $\hat \fM$ a continuous
$\hat \cL$-structure.

If $\langle G,S\rangle$ is definable then $\hat \fM$
is interpretable in $\fM$ and
$\hat T = \Th_{\hat \cL}(\hat \fM)$ is stable (assuming $T$ is).
In the general case, all we know is that $\hat \fM$ is saturated
for quantifier-free types in which only $\hat \varphi$ appear.
It follows from stability in $T$ that each formula $\hat \varphi(x,y,z)$,
with any partition of the variables, is stable.

For $h \in G$ define a mapping $\theta_h\colon \hat \fM \to \hat \fM$
by sending $g \in G$ to $hg$, $s \in S$ to $hs$, and fixing all
the auxiliary sorts.
This is easily verified to be an automorphism of $\hat \fM$.
Since the action of $G$ on $S$ is assumed to be transitive,
if $A \subseteq \bigcup_\varphi C_\varphi$ then all elements of $S$
have the same type over $A$
in $\hat \fM$, and similarly all elements of $G$.

\begin{lem}
  \label{lem:GenSetNF}
  Assume that $\varphi(x,\bar y) \in \Phi_S$ is stable.
  Then the following are equivalent for an instance
  $\varphi(x,\bar b)$:
  \begin{enumerate}
  \item The condition $\varphi(x,\bar b) = 0$ is generic in $S$.
  \item The condition $\hat \varphi(x,e,[\bar b]) = 0$ does not fork in
    $\hat \fM$ over $\emptyset$.
  \item The condition $\hat \varphi(x,e,[\bar b]) = 0$ does not fork in
    $\hat \fM$ over $[\bar b]$.
  \end{enumerate}
\end{lem}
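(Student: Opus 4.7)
The plan is to establish the cycle (ii) $\Rightarrow$ (iii) $\Rightarrow$ (i) $\Rightarrow$ (ii). The first implication is immediate from monotonicity of non-forking in the base: if a condition does not fork over $\emptyset$, it a fortiori does not fork over the larger set $[\bar b]$.

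For (iii) $\Rightarrow$ (i) the plan is to invoke \fref{prp:FaithfulNF} in $\hat \fM$. The decisive structural input, emphasised just before the statement, is that the $\theta_h$, for $h \in G$, are automorphisms of $\hat \fM$ which fix the auxiliary sort $C_\varphi$ pointwise and act transitively on both $G$ and $S$; hence every element of $G$ has the same type over $[\bar b]$ in $\hat \fM$. Consequently the $[\bar b]$-conjugates of the predicate $\hat \varphi(x,e,[\bar b])$ are precisely $\hat \varphi(x,g,[\bar b]) = \varphi(g^{-1}x,\bar b)$ for $g \in G$. From (iii), \fref{prp:FaithfulNF} furnishes a $[\bar b]$-definable faithful combination $\psi(x) = \theta\bigl(\varphi(g_n^{-1}x,\bar b)\bigr)_{n \in \bN}$ such that $\psi \leq 0$ is consistent. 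Being $[\bar b]$-invariant, $\psi$ is fixed by every $\theta_h$ and hence constant along each $G$-orbit in $S$; transitivity of $G$ on $S$ then forces $\psi$ to be globally constant on $S$. The constant must be $0$, since $\psi \geq 0$ as a faithful combination of $[0,1]$-valued predicates and $\psi \leq 0$ has a witness in $S$. Faithfulness now gives $\inf_n \varphi(g_n^{-1}s,\bar b) = 0$ for every $s \in S$, so for each $r > 0$ the open family $\bigl\{g_n\{\varphi(\cdot,\bar b) < r\}\bigr\}_{n \in \bN}$ covers $S$. Compactness of $[S]$ in the relevant type space extracts a finite subcover, witnessing that $Y_r = \{\varphi(\cdot,\bar b) \leq r\}$ is a generic subset of $S$. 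Since $r > 0$ is arbitrary, (i) follows via \fref{lem:GenPartialType}.

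For (i) $\Rightarrow$ (ii), I reduce by \fref{prp:NonDividing}, applied to the stable formula $\hat \varphi$ in $\hat \fM$, to non-dividing over $\emptyset$: given $r > 0$ and an $\emptyset$-indiscernible sequence $((g_n,[\bar b_n]))_{n \in \bN}$ in $\hat \fM$ with $(g_0,[\bar b_0]) = (e,[\bar b])$, show that $\bigcap_n g_n Y_r(\bar b_n) \neq \emptyset$. Since each $\bar b_n$ is $\emptyset$-conjugate to $\bar b$ in $\fM$, an $\fM$-automorphism transports (i) to show that each $Y_r(\bar b_n)$ is a generic subset of $S$. The main obstacle is to convert this pointwise genericity into a common point in the intersection of indiscernible translates; I would handle it by contradiction, so that if some finite $\bigcap_{n \leq N} g_n Y_r(\bar b_n)$ were empty, then with $r' \in (0,r)$ a compactness-plus-indiscernibility propagation would extend the failure along the full sequence, and a Ramsey-style extraction would yield an order-property configuration for $\hat \varphi$, contradicting its stability in $\hat \fM$. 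The delicate point is to execute this argument without circularly invoking the generic-type theory that this lemma is intended to underpin.
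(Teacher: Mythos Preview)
Your implications (ii) $\Rightarrow$ (iii) and (iii) $\Rightarrow$ (i) match the paper's proof essentially verbatim. The genuine gap is in (i) $\Rightarrow$ (ii): the proposed non-dividing argument is not a proof, and I do not see how the ``Ramsey-style extraction'' is supposed to produce an order-property configuration from an empty finite intersection $\bigcap_{n\leq N} g_n Y_r(\bar b_n)$. What you are in effect asserting is that finitely many indiscernibly-indexed generic translates always intersect, but that is exactly part of the generic-type theory this lemma is meant to underpin; the circularity you worry about is real, and the sketch does not escape it.

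The paper's argument for (i) $\Rightarrow$ (ii) avoids indiscernible sequences entirely. Fix $\varepsilon > 0$; genericity gives finitely many $g_0,\ldots,g_{n-1}\in G$ with $S = \bigcup_{i<n} g_i X_\varepsilon$. By the already-established existence of non-forking $\hat\varphi$-types (\fref{lem:NFExtExist}, applied in $\hat\fM$), choose $s \in S$ so that $\tp_{\hat\varphi}(s/[\bar b]g_{<n})$ does not fork over $\emptyset$. Then $s$ lies in some $g_i X_\varepsilon$, so $\hat\varphi(s,g_i,[\bar b]) = \varphi(g_i^{-1}s,\bar b) \leq \varepsilon$, witnessing that the condition $\hat\varphi(x,g_i,[\bar b]) \leq \varepsilon$ does not fork over $\emptyset$. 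Now apply the automorphism $\theta_{g_i^{-1}}$ of $\hat\fM$: it fixes the auxiliary sorts (hence $[\bar b]$), sends $g_i \mapsto e$, and preserves non-forking over $\emptyset$. Thus $\hat\varphi(x,e,[\bar b]) \leq \varepsilon$ does not fork over $\emptyset$ for every $\varepsilon > 0$, giving (ii).

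The idea you missed is that the automorphisms $\theta_h$ let you trade the parameter $g_i$ back to $e$ while preserving non-forking over $\emptyset$; this reduces the task to finding a \emph{single} non-forking type compatible with a \emph{finite} cover, rather than realising infinitely many conditions along an indiscernible sequence.
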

\begin{proof}
  Recall that the $\hat \cL$-formula $\hat \varphi(x_S,y_Gz_\varphi)$
  with this
  (or any other) partition of the variables is stable in $\hat \fM$.
  For $\varepsilon > 0$ let
  $X_\varepsilon
  = \{s \in S\colon \varphi(s,\bar b) \leq \varepsilon\}$.
  By \fref{lem:GenPartialType},
  the condition $\varphi(x,\bar b) = 0$ is generic if and only if
  $X_\varepsilon$ is a generic set for all $\varepsilon > 0$.
  \begin{cycprf}
  \item[\impnext]
    Assume first that $\varphi(x,\bar b) = 0$ is generic in $S$, i.e.,
    that the set $X_\varepsilon$ is generic for every $\varepsilon > 0$.
    Find $g_i \in G$ such that $S = \bigcup_{i<n} g_iX_\varepsilon$, and
    find $s \in S$ such that
    $\tp_{\hat \varphi}(s/[\bar b]g_{<n})$ does not fork over $\emptyset$
    (in symbols $s \ind[\hat \varphi] \,[\bar b]g_{<n}$).
    Since $s \in \bigcup_{i<n} g_iX_\varepsilon$ we may assume that
    $s \in g_0X_\varepsilon$,
    so
    $\hat \varphi(s,g_0,[\bar b])
    = \varphi(g_0^{-1}s,\bar b) \leq \varepsilon$.
    Thus $\hat \varphi(x,g_0,[\bar b]) \leq \varepsilon$
    does not fork over $\emptyset$.
    Applying $\theta_{g_0^{-1}}$ we see that
    $\hat \varphi(x,e,[\bar b]) \leq \varepsilon$ does
    not fork over $\emptyset$ either.
    It follows that
    $\hat \varphi(x,e,[\bar b]) = 0$
    does not fork over $\emptyset$.
  \item[\impnext]
    Immediate.
  \item[\impfirst]
    Assume now that
    $\hat \varphi(x,e,[\bar b]) = 0$ does not fork over $[\bar b]$.
    By \fref{prp:FaithfulNF} there are $g_n \in G$
    for $n \in \bN$
    and a faithful combination
    $\psi(x,[\bar b])
    = \theta\bigl( \hat \varphi(x,g_n,[\bar b]) \bigr)_{n\in\bN}$
    which is definable over $[\bar b]$ and such that
    $\psi(x,[\bar b]) = 0$ is consistent.
    Since $\hat \fM$ is saturated for quantifier-free types involving only
    $\hat \varphi$, there is $s \in S$ such that
    $\psi(s,[\bar b]) = 0$.
    Since all elements of $S$ have the
    same type over $[\bar b]$ in $\hat \fM$, we see that
    $\psi(s,[\bar b]) = 0$ for all $s \in S$.
    Assume (toward a contradiction) that there exists $\varepsilon > 0$
    such that $\varphi(x,\bar b) \leq \varepsilon$ is not generic.
    By compactness we can find $s \in S$ such that
    $\varphi(g_n^{-1}s,\bar b) \geq \varepsilon$
    for all $n$, i.e.,
    $\hat \varphi(s,g_n,[\bar b]) \geq \varepsilon$.
    Since the combination above was faithful we get
    $\psi(s,[\bar b]) \geq \varepsilon > 0$, a contradiction.
  \end{cycprf}
\end{proof}

\begin{lem}
  \label{lem:GenCondNF}
  Assume that $\varphi(x,\bar y) \in \Phi_S$ is stable
  and that $\varphi(x,\bar b) = 0$ is a generic condition in $S$.
  Then it does not fork over $\emptyset$.
\end{lem}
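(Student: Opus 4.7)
The plan is to deduce non-forking in $\fM$ from the non-forking in $\hat\fM$ supplied by \fref{lem:GenSetNF}, by bridging the two structures via the dividing criterion of \fref{prp:NonDividing}. By \fref{lem:GenSetNF}, the hypothesis that $\varphi(x,\bar b) = 0$ is generic in $S$ gives that $\hat\varphi(x, e, [\bar b]) = 0$ does not fork over $\emptyset$ in $\hat\fM$. To show that $\varphi(x, \bar b) = 0$ does not fork over $\emptyset$ in $\fM$, I apply the reverse direction of \fref{prp:NonDividing} in $\fM$: it suffices to prove that for every $\emptyset$-indiscernible sequence $(\bar b_n)_{n \in \bN}$ in $\fM$ with $\bar b_0 = \bar b$, the family $\{\varphi(x, \bar b_n) = 0\}_n$ is consistent.

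Fix such a sequence. Since $e \in \dcl^{eq}(\emptyset)$ in $\fM$ and the canonical parameter map $\bar c \mapsto [\bar c]_\varphi$ is $\emptyset$-definable in $\fM^{eq}$, the induced sequence $\bigl((e, [\bar b_n])\bigr)_{n \in \bN}$ in $\hat\fM$ should be $\emptyset$-indiscernible for $\hat\cL$: any $\hat\cL$-formula on this sequence unwinds, via the identity $\hat\varphi(s, g, [\bar c]) = \varphi(g^{-1} s, \bar c)$ and the canonical parameter interpretation, to an $\cL^{eq}$-definable expression in $(\bar b_n)$ (possibly involving suprema and infima over the $\emptyset$-type-definable sets $G$ and $S$), which is constant under permutation of increasing subsequences by $\fM$-indiscernibility of $(\bar b_n)$. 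Applying \fref{prp:NonDividing} in $\hat\fM$ to the stable formula $\hat\varphi$, non-forking of $\hat\varphi(x, e, [\bar b]) \leq 0$ over $\emptyset$ implies that $\{\hat\varphi(x, e, [\bar b_n]) \leq 0\}_n$ is consistent, and is realised by some $s$ in the sort $S$ of $\hat\fM$, which is the same sort as $S$ in $\fM$. Since $\hat\varphi(s, e, [\bar b_n]) = \varphi(s, \bar b_n)$, this $s$ witnesses the consistency of $\{\varphi(x, \bar b_n) = 0\}_n$ in $\fM$, as required.

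The main delicate point is justifying the transfer of indiscernibility: confirming that every $\hat\cL$-formula on $\bigl([\bar b_n]\bigr)_n$ can indeed be expressed as an $\cL^{eq}$-definable expression in $(\bar b_n)_n$, so that $\cL$-indiscernibility in $\fM$ is inherited by the induced sequence in $\hat\fM$ (here the fact that $G$ is $\emptyset$-type-definable in $\fM$ and that $e$ is $\emptyset$-definable will be essential). A minor secondary concern is that $\hat\fM$ is only saturated for quantifier-free $\hat\varphi$-types, but the direction ``non-forking $\Rightarrow$ non-dividing'' in \fref{prp:NonDividing} imposes no saturation requirement on the ambient model, so no difficulty arises there.
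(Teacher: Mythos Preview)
Your proposal is correct and follows essentially the same route as the paper: reduce non-forking to non-dividing via \fref{prp:NonDividing}, pass an indiscernible sequence $(\bar b_n)$ to the sequence $\bigl(e,[\bar b_n]\bigr)$ in $\hat\fM$, use \fref{lem:GenSetNF} together with the forward direction of \fref{prp:NonDividing} in $\hat\fM$ to get consistency of $\{\hat\varphi(x,e,[\bar b_n])=0\}_n$, and then realise this in $S$.

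Two small remarks. First, the indiscernibility transfer that you flag as delicate can be handled more cleanly than by unwinding $\hat\cL$-formulae: every automorphism of $\fM$ induces an automorphism of $\hat\fM$ (it acts on $G$ and $S$ directly, fixes $e$, and acts on each $C_\varphi$ via $[\bar c]\mapsto[\sigma\bar c]$), so $\fM$-indiscernibility of $(\bar b_n)$ immediately yields $\hat\fM$-indiscernibility of $\bigl(e,[\bar b_n]\bigr)$. Second, your closing comment slightly misplaces the role of saturation: the limited saturation of $\hat\fM$ is not needed for the implication ``non-forking $\Rightarrow$ non-dividing'', as you say, but it \emph{is} what justifies the step ``\ldots and is realised by some $s$ in $S$'', since the consistent family $\{\hat\varphi(x,e,[\bar b_n])=0\}_n$ is precisely a quantifier-free $\hat\varphi$-type, for which $\hat\fM$ is saturated.
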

\begin{proof}
  By \fref{prp:NonDividing} it will be enough to show that
  $\varphi(x,\bar b) = 0$ does not divide over $\emptyset$.
  For this purpose let $(\bar b_n)_{n\in\bN}$ be any indiscernible
  sequence with $\bar b_0 = \bar b$.
  Since $e \in \dcl(\emptyset)$, the sequence
  $(e,\bar b_n)_{n\in\bN}$ is
  indiscernible as well, and thus
  the sequence $(e,[\bar b_n])_{n\in\bN}$ is indiscernible in
  $\hat \fM$.
  On the other hand, since the condition
  $\varphi(x,\bar b) = 0$ is generic,
  by \fref{lem:GenSetNF} the condition $\hat \varphi(x,e,[\bar b]) = 0$
  does not fork over $\emptyset$, so
  $\{\hat \varphi(x,e,[\bar b_n])\}_{n\in\bN}$ is consistent.
  Since $\hat \fM$ is saturated for such formulae, there is $s \in S$
  such that $\hat \varphi(s,e,[\bar b_n]) = 0$,
  i.e., $\varphi(s,\bar b_i) = 0$, for all $n$,
  as desired.
\end{proof}

From now on we assume that $T$ is stable.

\begin{prp}
  \label{prp:GenTypeExt}
  Let $\pi(x)$ be a partial type over $A$.
  Then $\pi$ is generic if and only if it extends to a complete
  generic type over $A$, i.e., if and only if
  $[\pi] \subseteq \tS_S(A)$
  contains a generic type.
  In particular, generic types exist over every set.
\end{prp}
\begin{proof}
  Right to left is clear, so let us prove left to right.
  Assume therefore that $\pi$ is a generic partial type.
  Since the set of complete generic types is closed it will be enough
  to show that every logical neighbourhood of $\pi$ contains a generic
  type, and we may further restrict our attention to logical
  neighbourhoods defined by a single condition
  $\varphi(x,\bar b) = 0$.
  Since $\pi$ is generic in $S$ so is $\varphi(x,\bar b) = 0$.
  By \fref{lem:GenSetNF} $\hat \varphi(x,e,[\bar b]) = 0$ does not
  fork over $\emptyset$ in $\hat \fM$.
  By \fref{cor:CompleteNFExt} there exists a type
  $\hat p \in \tS_x(\hat \fM)$
  such that
  $\hat \varphi(x,e,[\bar b])^{\hat p} = 0$
  and in addition
  $p\rest_{\hat \psi}$ does not fork over $\emptyset$
  for every formula $\psi \in \Phi_S$.
  Let
  \begin{gather*}
    p(x)
    = \bigl\{
    \psi(x,\bar c) = \hat \psi(x,e,[\bar c])^{\hat p}
    \bigr\}_{\psi \in \Phi_S, \bar c \in \fM}.
  \end{gather*}
  This type is approximately finitely realised in $\fM$
  (since $\hat p$ is in $\hat \fM$) and therefore consistent.
  By \fref{lem:GenSetNF} every condition in $p$ is generic
  (since $\hat p$ does not fork over $\emptyset$),
  and by \fref{lem:GenPartialType}, $p$ is generic,
  and so is $p\rest_A$.
  We have thus found a generic type
  $p\rest_A \in [\varphi(x,\bar b) = 0]$ and the proof is complete.
\end{proof}

\begin{prp}
  \label{prp:GenTypeNFExt}
  Assume $A \subseteq B$.
  Then a type $p \in \tS_S(B)$ is generic if and only if it does not
  fork over $A$ and $p\rest_A$ is generic.
  In particular, a generic type does not fork over $\emptyset$.  
\end{prp}
\begin{proof}
  First of all, the last assertion follows from \fref{lem:GenCondNF}
  and the fact that the set of non forking types is closed.

  We now prove the main assertion.
  For left to right, if $p \in \tS_S(B)$ is generic
  then clearly so is $p\rest_A$, and by the previous paragraph $p$
  does not fork over $A$.
  For the converse, assume that $p \in \tS_S(B)$ does not fork over $A$
  and $p_0 = p\rest_A$ is generic.
  Replacing $p$ with a non forking extension we may assume that
  $B = \fM$.
  By \fref{prp:GenTypeExt} there is $p_1 \in \tS_S(\fM)$ extending
  $p_0$ which is generic, and by what we have just shown it is
  also non forking over $A$.
  Since $p\rest_A = p_0 = p_1\rest_A$ there is $f \in \Aut(\fM/A)$
  sending $p_1\rest_{\acl^{eq}(A)}$ to $p\rest_{\acl^{eq}(A)}$,
  and therefore $p_1$ to $p$.
  Thus $p$ is generic as well.
\end{proof}

We can also complement \fref{lem:GenericTypeLeftAction}:
\begin{prp}
  The action of $G$ on the set of generic types
  in $\tS_S(\fM)$ is transitive.
\end{prp}
\begin{proof}
  Let $p,q \in \tS_S(\fM)$ be two generic types.
  Define
  \begin{gather*}
    \hat p =
    \{
    \hat\varphi(x,g,[\bar b]) = \varphi(g^{-1}x,[\bar b])^p
    \}_{\varphi \in \Phi_S, \bar b \in \fM, g \in G},
  \end{gather*}
  and define $\hat q$ similarly.
  Let $\hat C = \bigl( \acl^{eq}(\emptyset) \bigr)^{\hat \fM}$ and
  let
  $\hat p_0  = \hat p\rest_C$,
  $\hat q_0  = \hat q\rest_C$.
  Since $\hat \fM$ is saturated for formulae of this form we may
  realise $\hat p_0$ and $\hat q_0$ in $\hat \fM$,
  and by transitivity there exists $h \in G$
  such that
  $\theta_h\hat p_0 \cup \hat q_0$ is realised.
  Since $\theta_h$ is an automorphism of $\hat \fM$
  we must have
  $\hat q_0 = \theta_h\hat p_0 = (\theta_h\hat p)\rest_C$.
  In addition, neither of
  $\hat p$, $\hat q$ or $\theta_h \hat p$ forks over
  $\emptyset$, whereby $\theta_h\hat p = \hat q$,
  i.e., $hp = q$.
\end{proof}

\begin{thm}
  \label{thm:GenTypeProps}
  Let $G$ be a type-definable group in a stable theory
  $T$, acting type-definably and transitively on a
  type-definable set $S$.
  \begin{enumerate}
  \item \label{item:GenTypeProps1}
    If $g \ind_A s$ (where $g \in G$, $s \in S$)
    and $g$ is left-generic over $A$ then $gs$ is
    generic over $A$ and $gs \ind_A s$.
  \item An element $s \in S$ is generic if and only if
    $g \ind_A s$ implies $gs \ind_A g$
    for every $g \in G$.
    Moreover, in this case $gs$ is generic over $A$ as well.
  \item An element $g \in G$ is left-generic over $A$ if and only if
    $g^{-1}$ is.
  \item An element $g \in G$ is left-generic if and only if it is
    right-generic (over $A$).
    From now on we shall only speak of \emph{generic} elements and
    types in $G$.
  \item An element $g \in G$ is generic over $A$ if and only if it is
    generic over $\emptyset$ and $g \ind A$.
  \end{enumerate}
\end{thm}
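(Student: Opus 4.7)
The plan is to dispatch the items essentially in the order stated. Item (v) will be an immediate application of \fref{prp:GenTypeNFExt} to the inclusion $\emptyset \subseteq A$: the type $\tp(g/A)$ is generic if and only if it does not fork over $\emptyset$ (that is, $g \ind A$) and its restriction $\tp(g/\emptyset)$ is generic. For item (i), starting from $g \ind_A s$ with $g$ left-generic over $A$, I would first upgrade $g$ to left-generic over $As$ via \fref{prp:GenTypeNFExt}; then, since $s \in S(As)$, \fref{lem:GenericTypeRightAction} applied to $R_s \colon \tS_G(As) \to \tS_S(As)$ makes $\tp(gs/As)$ generic over $As$, and a second pass of \fref{prp:GenTypeNFExt} extracts both $gs \ind_A s$ and $\tp(gs/A)$ generic.

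For (ii) ($\Rightarrow$), symmetry of forking turns $g \ind_A s$ into $s \ind_A g$, so $s$ is generic over $Ag$ by \fref{prp:GenTypeNFExt}; since $g \in G(Ag)$, \fref{lem:GenericTypeLeftAction} pushes this forward to $\tp(gs/Ag) = L_g\tp(s/Ag)$ generic over $Ag$, from which \fref{prp:GenTypeNFExt} reads off $gs \ind_A g$ and $\tp(gs/A)$ generic. For the converse I pick $g$ left-generic over $As$ (available from \fref{prp:GenTypeExt}). Then $g \ind_A s$ and $g$ is left-generic over $A$, so (i) gives $\tp(gs/A)$ generic while the hypothesis gives $gs \ind_A g$; together these put $\tp(gs/Ag)$ generic over $Ag$ by \fref{prp:GenTypeNFExt}. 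Applying the genericity-preserving homeomorphism $L_{g^{-1}}$ of $\tS_S(Ag)$ turns this into $\tp(s/Ag)$ generic, and a final appeal to \fref{prp:GenTypeNFExt} delivers $\tp(s/A)$ generic.

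The principal conceptual step is (iii), which I plan to tackle by exploiting the characterization coming from (ii) applied to the left-action of $G$ on itself: an element $g$ is left-generic over $A$ iff for every $g' \in G$ with $g' \ind_A g$ one has $g'g \ind_A g'$. The same characterization for $g^{-1}$, using that $g^{-1} \in \dcl(Ag)$ so the hypothesis $g' \ind_A g^{-1}$ coincides with $g' \ind_A g$, reads: for every such $g'$, $g'g^{-1} \ind_A g'$. To match the two conditions I substitute $g' = g'' g$, a bijection of $G$ with itself, under which $g'g^{-1}$ becomes $g''$. Two elementary facts close the loop: first, $g''$ and $g''g$ are $Ag$-interdefinable, whence $g''g \ind_A g \Leftrightarrow g'' \ind_A g$; second, by symmetry of forking, $g'' \ind_A g''g \Leftrightarrow g''g \ind_A g''$. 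The transformed condition for $g^{-1}$ is then exactly the original condition for $g$, so $g$ is left-generic iff $g^{-1}$ is.

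Finally, for (iv) I would record the elementary observation that a type-definable $X \subseteq G$ is left-generic iff $X^{-1}$ is right-generic (from $G = \bigcup g_i X$ iff $G = \bigcup X^{-1} g_i^{-1}$, inversion being $\emptyset$-definable), and since logical neighbourhoods of $\tp(g/A)$ correspond via inversion to logical neighbourhoods of $\tp(g^{-1}/A)$, the type $\tp(g/A)$ is left-generic iff $\tp(g^{-1}/A)$ is right-generic. Combining this with (iii), which equates left-genericity of $\tp(g/A)$ and $\tp(g^{-1}/A)$, yields that left-generic and right-generic coincide for $\tp(g/A)$; the main obstacle is thus the substitution trick in (iii), while everything else is bookkeeping around \fref{prp:GenTypeNFExt}, \fref{lem:GenericTypeRightAction}, and \fref{lem:GenericTypeLeftAction}.
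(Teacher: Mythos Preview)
Your proposal is correct. Items (i), (ii), (iv) and (v) are handled exactly as in the paper (modulo your unpacking the ``moreover part'' of (ii) directly via \fref{lem:GenericTypeLeftAction}, whereas the paper phrases the converse of (ii) as an application of that moreover part to $g^{-1}$ acting on $gs$; these are the same argument).

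The only genuine difference is in item (iii). The paper does not use the characterisation from (ii); instead it introduces an auxiliary $h\in G$ left-generic over $A$ with $g\ind_A h$, applies (i) once to get $gh$ left-generic over $A$ with $gh\ind_A h$, rewrites this as $h\ind_A (gh)^{-1}=h^{-1}g^{-1}$, and applies (i) a second time with $h$ acting on $h^{-1}g^{-1}$ to conclude that $g^{-1}=h(h^{-1}g^{-1})$ is left-generic over $A$. Your route, reducing the (ii)-characterisation for $g^{-1}$ to that for $g$ via the substitution $g'=g''g$ together with the interdefinability of $g''$ and $g''g$ over $Ag$ and symmetry of forking, is equally valid and avoids the auxiliary generic. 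The paper's argument is closer in flavour to the standard ``generic calculus'' one sees in stable group theory (manipulate independent generics), while yours is a clean algebraic rewriting; neither has a real advantage in length or difficulty here.
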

\begin{proof}
  We use \fref{prp:GenTypeNFExt} repeatedly.

  For the first item, let $s \in S$, $g \in G$,
  and assume that $g \ind_A s$.
  If $g$ is left-generic over $A$ then it is left-generic over $A,s$.
  By \fref{lem:GenericTypeRightAction} $gs$ is generic over $A,s$.
  It follows that $gs$ is generic over $A$ and that $gs \ind A,s$, as
  desired.

  For the second item, left to right, as well as the moreover part,
  are proved as in the previous argument, using
  \fref{lem:GenericTypeLeftAction}.
  For right to left, assume that
  $s \ind_A g$ implies $gs \ind A,g$ for all $g$.
  We may choose $g$ which is left-generic over $A$ such that
  $g \ind_A s$.
  Then $g^{-1} \ind_A gs$ by assumption, $gs$ is generic over $A$ by
  the first item, and $s = g^{-1}gs$ is generic
  over $A$ by the moreover part.

  For the third item, let $g \in G$ be left-generic over $A$.
  Choose $h \in G$ left-generic over $A$ such that $g \ind_A h$.
  By the first item $gh$ is generic over $A$ and
  $gh \ind_A h$.
  This can be re-written as $h \ind_A h^{-1}g^{-1}$.
  By the first item again, $g^{-1} = hh^{-1}g^{-1}$ is left-generic
  over $A$.
  Notice that $g^{-1}$ is left-generic if and only if $g$ is
  right-generic, yielding the fourth item as well.

  The last item is just \fref{prp:GenTypeNFExt}.
\end{proof}

\subsection{Stabilisers}

We have already observed in \fref{lem:GenericTypeLeftAction} that
for any set of parameters $A$, a group element $g \in G(A)$
induces a homeomorphism $L_g\colon p \mapsto gp$ on
$\tS_S(A)$.
It is also not difficult to check that
$L_g \circ L_h = L_{gh}$, whence a group action of $G(A)$ on
$\tS_S(A)$.
In addition, we have seen that it restricts to an action
by homeomorphism of $G(A)$ on the set of generic types in $\tS_S(A)$.

Specifically, we obtain an action of $G = G(\fM)$ on
$\tS_S(\fM)$.
The \emph{stabiliser} of a type $p \in \tS_S(\fM)$
under this action is
$\Stab(p) = \{g\in G\colon gp = p\} \leq G$.
For a stationary type $p \in \tS_S(A)$ we define
$\Stab(p) = \Stab(p\rest^\fM)$.

\begin{prp}
  Let $p \in \tS_S(A)$ be stationary.
  Then stabiliser $\Stab(p)$ is a sub-group of $G$
  type-definable over $\Cb(p)$.

  Moreover, assume that $s \models p$,
  $g \in G$ and $g \ind_A s$.
  Then $g \in \Stab(p)$ if and only if $gs \models p$.
\end{prp}
\begin{proof}
  We may assume that $p \in \tS_S(\fM)$.

  Let $\varphi(x,\bar z)$ be a formula, $x$ in the sort of $S$.
  Let $y$ be a variable in the sort of $G$.
  Then $\varphi(yx,\bar z)$ is a definable predicate on
  $G \times S \times \langle \text{sort of }\bar z\rangle$,
  i.e., a continuous function
  $\tS_{G,S,\bar z}(T) \to [0,1]$.
  By Tietze's Extension Theorem
  this extends to a continuous function
  $\tS_{x,y,\bar z}(T) \to [0,1]$.
  For clarity we shall use $\varphi(yx,\bar z)$ to denote
  the corresponding definable predicate.

  Once this technical preliminary is taken care of we see that
  $\Stab(p)$ is defined by the following axiom scheme:
  \begin{gather*}
    \pi(y) =
    \Bigl\{
      \sup_{\bar z} | d_p\varphi(x,\bar z) - d_p\varphi(yx,\bar z) |
      = 0
    \Bigr\}_{\varphi \in \Phi_S}.
  \end{gather*}
  The moreover part easily follows.
\end{proof}

\begin{lem}
  Let $H < G$ be a type-definable subgroup of bounded index,
  say with parameters in $A$, and let $g \in H$.
  Then $g$ is generic over $A$ in $G$ if and only if it is
  generic over $A$ in $H$.
\end{lem}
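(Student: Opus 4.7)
The proof splits into two directions. For the forward implication I would use the characterisation in item (ii) of \fref{thm:GenTypeProps}: applied to the left action of $G$ on itself, $g$ is generic in $G$ over $A$ iff $k \ind_A g \Rightarrow kg \ind_A k$ for every $k \in G$. Restricting the quantifier to $k \in H$ trivially preserves the implication, and item (ii) applied in the other direction to the left action of $H$ on itself --- which is legitimate, $H$ being itself a type-definable stable group --- converts this into the statement that $g$ is generic in $H$ over $A$.

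For the backward direction I would work inside a monster model $\fM \supseteq A$, which, being sufficiently saturated, represents each of the boundedly many cosets of $H$ in $G$ by some element of $\fM$. The idea is to exhibit a single complete type over $\fM$ that extends $\tp(g/A)$ and is simultaneously visibly generic in $G$.

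Concretely, assume $g$ is generic in $H$ over $A$. Using \fref{prp:GenTypeExt} for the $H$-action, extend $\tp(g/A) \in \tS_H(A)$ to a generic complete type $p \in \tS_H(\fM)$. Separately, using \fref{prp:GenTypeExt} for $G$, pick any generic $q^{*} \in \tS_G(\fM)$; since the cosets of $H$ partition $G$ into boundedly many type-definable pieces (all represented in $\fM$), $q^{*}$ must concentrate on some coset $g_{0} H$ with $g_{0} \in \fM$. The translate $q := L_{g_{0}^{-1}} q^{*}$ then lies in $\tS_H(\fM)$ and, by \fref{lem:GenericTypeLeftAction}, remains generic in $G$ over $\fM$. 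Applying the already-proved forward direction to a realisation of $q$ shows that $q$ is also generic in $H$ over $\fM$. Transitivity of the $H$-action on the generic types in $\tS_H(\fM)$ --- the analogue for $H$ of the proposition preceding \fref{thm:GenTypeProps} --- now produces $h \in H$ with $L_h q = p$. Hence $p = L_h q$ is generic in $G$ over $\fM$, and its restriction $p \rest_{A} = \tp(g/A)$ is therefore generic in $G$ over $A$.

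The only real difficulty is the second step: a priori the generic $q^{*}$ need not concentrate on a coset with a representative in $\fM$, and it is precisely here that the bounded index of $H$, together with the saturation of $\fM$, enters the argument. Everything else is a straightforward transfer to $H$ of results already established for $G$, made legitimate by $H$ being itself a type-definable stable group acting on itself.
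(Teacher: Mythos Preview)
Your proof is correct. For the backward direction you and the paper take essentially the same route: translate a $G$-generic into $H$ using a coset representative (this is where bounded index is used in both arguments), then use the group structure of $H$ to carry this to an arbitrary $H$-generic. The paper phrases it element-wise---pick $h_1 \in H$ generic in $G$, take $h_2$ generic in $H$ with $h_1 \ind h_2$, observe $h_1 h_2$ is generic in both and $h_1 h_2 \ind h_1$, and conclude $h_2 = h_1^{-1}(h_1 h_2)$ is generic in $G$ via \fref{thm:GenTypeProps}\fref{item:GenTypeProps1}---while you do the same thing at the level of types, invoking transitivity of the $H$-action on its generic types in $\tS_H(\fM)$ instead. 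These are equivalent packagings of the same idea.

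Your forward direction, however, is genuinely simpler than the paper's. The paper dispatches it by a ``similar argument'' (multiply against an element generic in both $H$ and $G$, whose existence was just established), whereas you observe that the characterisation in \fref{thm:GenTypeProps}(ii) passes directly from $G$ to $H$ by restricting the universal quantifier over the acting element. This shortcut avoids any reference to the backward direction or to elements generic in both groups, and is worth keeping.
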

\begin{proof}
  Naming $A$ in the language we may assume that $A = \emptyset$.
  Since $H$ has bounded index we may enumerate its cosets
  $\{ g_iH \}_{i<\lambda}$.
  Let $h_0 \in G$ be generic over $\{g_i\}_{i<\lambda}$.
  Then $h_0 \in g_iH$ for some $i$, and
  $h_1 = g_i^{-1}h_0 \in H$ is generic in $G$.
  Now let $h_2$ be generic in $H$.
  Without loss of generality we may assume that $h_1 \ind h_2$.
  Then $h_1h_2 \in H$ is generic both in $H$ and in $G$ and
  $h_1h_2 \ind h_1$.
  Thus $h_2 = h_1^{-1}h_1h_2$  is generic in $G$ as well.
  We have thus shown that every generic of $H$ is a generic of $G$.
  A similar argument shows that every generic of $G$ in $H$ is generic
  in $H$.
\end{proof}

\begin{prp}
  A type $p \in \tS_S(A)$ is generic if and only if $\Stab(p)$
  has bounded index in $G$.
\end{prp}
\begin{proof}
  There are only boundedly many generic types over $\fM$, since they
  do not fork over $\emptyset$ and therefore determined by their
  restriction to $\acl^{eq}(\emptyset)$.
  In addition, the action of $G$ on $\tS_S(\fM)$ restrict to an action
  of $G$ on the space of generic types, so the stabiliser of a generic
  type must be of bounded index.

  Conversely, assume $\Stab(p)$ has bounded index,
  and let $s \models p$.
  Then there exists $g \in \Stab(p)$ which is generic
  in $G$ over $A$, and we may further assume that
  $g \ind_A s$.
  Then $gs \models p$ is generic over $A$, i.e., $p$ is generic.
\end{proof}

Since $G$ acts transitively on the generic types over $\fM$, the
stabilisers of generic types are all conjugate.
It is also not difficult to check that if
$p \in \tS_S(\fM)$ is generic, $q \in \tS_G(\fM)$ is a generic type of
$\Stab(p)$ (and therefore of $G$), and
$s \models p \rest_{acl^{eq}(\emptyset)}$,
then $qs = p$.
If $q'$ is any other generic of $G$ then (since $G$ acts transitively
on its own generic types, on the left as well as on the right) there
exists $g \in G$ such that $q = q'g$
and $p = q'(gs)$.
Thus the right action of $S$ on $G$ send each and every generic type
of $G$ onto the generic types of $S$, complementing
\fref{lem:GenericTypeRightAction}.

\begin{thm}
  \label{thm:ConnectedComponent}
  Let $G$ be a type-definable group in a stable theory, say over
  $\emptyset$.
  Then $G$ admits a smallest type-definable group of bounded index
  (over any set of parameters), called the \emph{connected component}
  of $G$, and denoted $G^0$.
  It has the following additional properties.
  \begin{enumerate}
  \item The connected component $G^0$ is a normal subgroup of $G$,
    type-definable over $\emptyset$.
  \item The stabiliser of every generic type is equal to $G^0$.
  \item Each coset $gG^0$ contains a unique generic type over $\fM$.
  \item The generic type of $G^0$ is definable over $\emptyset$.
  \item If $p \in \tS_G(A)$ is any stationary generic type over
    a small set then $G^0 = \{g^{-1}h\colon g,h \models p\}$.
  \end{enumerate}
\end{thm}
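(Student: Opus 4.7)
The plan is to define $G^0$ as the stabiliser of a stationary generic and prove its normality and minimality via a coset-containment argument, from which the remaining properties follow.

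First, by \fref{prp:GenTypeExt} together with the stationarity of types over $\acl^{eq}(\emptyset)$ given by \fref{prp:NFExtDef}(\fref{item:NFExtDefStationary}), there is a stationary generic $p \in \tS_G(\acl^{eq}(\emptyset))$; let $\hat p \in \tS_G(\fM)$ be its unique (and still generic) non-forking extension. Set $G^0 := \Stab(\hat p)$, which by the previous proposition is type-definable over $\Cb(\hat p) \subseteq \acl^{eq}(\emptyset)$ and of bounded index. The central technical step is the following key claim: for every type-definable bounded-index \emph{normal} subgroup $N \trianglelefteq G$, one has $G^0 \subseteq N$. Indeed, the set of cosets of $N$ has bounded cardinality, so by stability it embeds into $\acl^{eq}(\emptyset)$; hence the coset $[a]_N$ of any $a \models p$ is $\acl^{eq}(\emptyset)$-definable and, by stationarity of $p$, is constant across realisations, say equal to $g_0 N$. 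Then $\hat p$ is concentrated on $g_0 N$ alone; for $s \in G^0$ and $a \models \hat p$ (in a sufficiently saturated extension), the stabiliser property gives $sa \models \hat p$, so $sa \in g_0 N$, and writing $a = g_0 n$ with $n \in N$ yields $s \in g_0 N a^{-1} = g_0 N g_0^{-1} = N$ by normality.

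Applying the key claim to the normal core $N^\star := \bigcap_{g \in G} gG^0g^{-1}$, which is itself type-definable and of bounded index via the embedding $G/N^\star \hookrightarrow \mathrm{Sym}(G/G^0)$ into a bounded symmetric group, yields $G^0 = N^\star$, establishing normality. Minimality then follows: for any type-definable bounded-index $H \leq G$, its normal core is normal of bounded index, is contained in $H$, and contains $G^0$ by the key claim. $\Aut(\fM)$-invariance, and hence $\emptyset$-definability via \fref{lem:InvariantDefinable}, follow from minimality, since any automorphic image of $G^0$ is another type-definable bounded-index subgroup of the same bounded index, hence coincides with $G^0$. Property (ii) is then immediate from transitivity of $G$ on generics together with normality: for any generic $q = g\hat p$, $\Stab(q) = gG^0g^{-1} = G^0$.

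For (iii), orbit-stabiliser yields exactly $[G:G^0]$ generics over $\fM$; by the same stability argument, each generic is concentrated on a unique coset of $G^0$, so the assignment generic $\mapsto$ concentrating coset is a $G$-equivariant map between two transitive $G$-sets of equal cardinality, necessarily a bijection. Property (iv) follows because the unique generic concentrating on the coset $G^0$ is $\Aut(\fM)$-invariant (as $G^0$ is $\emptyset$-definable and this generic is unique), hence $\emptyset$-definable. For (v), the reverse inclusion is immediate from (iii): if $g, h \models p$, they lie in a common coset $C$ of $G^0$, so $g^{-1}h \in C^{-1}C = G^0$. For the forward inclusion, apply the whole development to the right action of $G$ on itself, yielding the same $G^0$ (the minimum type-definable bounded-index subgroup depends only on the group structure, not on which side acts); then $G^0$ is also the right stabiliser of $p$, so for $k \in G^0$ one picks $g \models p$ with $g \ind k$, sets $h := gk$ which realises $p$, and then $g^{-1}h = k$. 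The main obstacle throughout is the key claim, whose proof hinges on identifying the bounded quotient $G/N$ with an algebraic subset of $\fM^{eq}$ — this is where stability enters crucially, via the collapse of bounded orbits to algebraic ones.
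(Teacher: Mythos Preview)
Your overall strategy is sound and genuinely different from the paper's, but there is one soft spot worth fixing.

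\textbf{The gap.} In the key claim you assert that ``the set of cosets of $N$ has bounded cardinality, so by stability it embeds into $\acl^{eq}(\emptyset)$.'' This is the statement that bounded hyperimaginaries are interdefinable with sequences in $\acl^{eq}$, which the paper does not establish (and which is not entirely trivial in continuous logic, where the quotient by a type-definable equivalence relation need not be an imaginary sort in the sense of \fref{sec:CanonicalBases}). Fortunately you do not need it: since $N$ is type-definable over a small set and has boundedly many cosets, representatives for all cosets already live in $\fM$, each coset is type-definable over $\fM$, and the complete type $\hat p \in \tS_G(\fM)$ must therefore concentrate on exactly one of them. That is all you use. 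The invocation of stationarity of $p$ at this point is also a red herring; what matters is completeness of $\hat p$ over $\fM$. With this simplification your key claim goes through (remembering also that for $s \in G^0 \subseteq \fM$ and $a \models \hat p$, the independence $s \ind_\fM a$ required by the stabiliser proposition is automatic).

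\textbf{Comparison.} The paper takes a shorter route to the same destination. Its central observation is that \emph{all} generic types over $\fM$ have the same left stabiliser: since $G$ acts transitively on generics on the right as well, any two generics satisfy $q = pg$, and right translation commutes with left translation, giving $\Stab(q) = \Stab(p)$ directly. This yields item~(ii) immediately, and minimality follows in one line: given any bounded-index $H$, the preceding lemma produces a generic $p$ concentrated on $H$, whence $G^0 = \Stab(p) \subseteq H$ (if $g \in G^0$ and $a \models p$, then $ga \models p$ lies in $H$, so $g = (ga)a^{-1} \in H$). Normality then follows from minimality applied to $G^0 \cap gG^0g^{-1}$. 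Your route instead first proves containment in \emph{normal} bounded-index subgroups via coset concentration, then bootstraps to arbitrary $H$ via the normal core, and recovers (ii) only afterwards from normality and left-transitivity. Both are correct; the paper's avoids the normal-core detour and the unneeded appeal to hyperimaginary elimination, while yours has the minor virtue of making the role of coset concentration explicit, which you then reuse cleanly in (iii) and (v).
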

\begin{proof}
  We start by constructing $G^0$ and proving the second item.
  Since left generic and right generic are the same,
  the action of $G$ on the generic types is transitive on
  either side.
  In particular, if $p,q\in \tS_G(\fM)$ are generic then there exists
  $g \in G$ such that $q = pg$,
  and thus $\Stab(p) = \Stab(q)$.
  Let this unique stabiliser of generic types be denoted $G^0$.
  Then $G^0$ is type-definable, and since $G$ $\emptyset$-invariant,
  so is $G^0$, and we may conclude that $G^0$ is type-definable over
  $\emptyset$ as well.
  We also already know that $G^0$ has bounded index in $G$.
  Assume now that $H \leq G^0$ is another type-definable subgroup
  of bounded index, say over $\emptyset$ (otherwise name the
  parameters in the language).
  Then there exists $p \in \tS_H(\fM)$ generic in $G$,
  so $\Stab(p) = G^0$, whereby $G^0 \subseteq H$.
  Thus $G^0$ is indeed the smallest type-definable subgroup of $G$ of
  bounded index.
  Notice that $G^0 \cap gG^0g^{-1}$ is also type-definable of bounded
  index for every $g \in G$, so $G^0$ is normal in $G$.
  This concludes the proof of the first two items.

  Let $p \in \tS_G(\fM)$ be generic in $G^0$.
  Since $G^0 = \Stab(p)$ acts transitively on its generic types,
  $p$ is the unique generic type in $G^0$.
  It follows that a coset $gG^0$ contains a unique generic type $gp$.
  The uniqueness of the generic type of $G^0$ implies that it is
  $\emptyset$-invariant, and therefore definable over $\emptyset$.

  Finally, let $p \in \tS_G(A)$ be any stationary generic type over a
  small set.
  Then $p\rest^\fM$ is the unique generic type in some coset $gG^0$.
  It follows that $gG^0$ is $A$-invariant, so
  $p \vdash x \in gG^0$.
  Thus $\{g^{-1}h\colon g,h \models p\} \subseteq G^0$.
  Conversely, let $g \in G^0$, and let $h \models p$, $g \ind_A h$.
  Since $G^0$ must also be the right-stabiliser of $p$
  we have $hg \models p$ as well, and $g = h^{-1}(hg)$, as desired.
\end{proof}
It follows that $G$ is connected (i.e., $G = G^0$) if and only if
it has a unique generic type.

\subsection{Global group ranks}

We have seen that a type of a member of $S$ is generic if and only if
the corresponding type in $\hat \fM$ is a non forking extension of the
unique type over $\emptyset$, i.e., if its $\hat \varphi$-type has the same
Cantor-Bendixson ranks as all of $S$ for every $\varphi \in \Phi_S$.
Thus the various $\varepsilon$-$\hat \varphi$-Cantor-Bendixson ranks play the role of
stratified local ranks characterising genericity.
In a superstable (and even more so in an $\aleph_0$-stable) theory one
would expect a similar characterisation via global Lascar and/or
Morley ranks.
We shall consider here the case of superstability and Lascar ranks.
Morley ranks are studied in a subsequent paper
\cite{BenYaacov:DefinabilityOfGroups}, and similar results are
proved.

Significant work regarding superstability has been carried out in the
context of metric Hausdorff cats, and in many cases definitions and
proofs transfer verbatim to continuous logic.
The objects of study in this context are partial types
constructed from complete types and conditions of the form
$d(x,y) \leq \varepsilon$ via conjunction and existential
quantification.
It is a general fact that if $\pi(\bar x,\bar y)$ is a partial type
then the property $\exists \bar y\, \pi(\bar x,\bar y)$, where the
existential quantifier is interpreted in a sufficiently saturated
model, is definable by a partial type as well.
This mostly happens in the following form.
Let $\pi(\bar x)$ be a partial type and $\varepsilon \geq 0$ a real
number.
We define $\pi(\bar x^\varepsilon)$ to be the partial type expressing
that
$\exists \bar y\,
\bigl( \pi(\bar y) \, \& \, d(\bar x,\bar y) \leq \varepsilon \bigr)$,
i.e., that $\pi$ is realised in the $\varepsilon$-neighbourhood of
$\bar x$.
For a tuple $\bar a$ we shall use $\bar a^\varepsilon$ as a notational
representation for the somewhat vague concept of
``$\bar a$ known up to distance $\varepsilon$'',
so in particular $\bar a^0$ is just another representation for
$\bar a$.
Accordingly, if $p(\bar x) = \tp(\bar a/C)$ then we define
$\tp(\bar a^\varepsilon/C) = p(\bar x^\varepsilon)$,
so in particular
$\tp(\bar a/C) = \tp(\bar a^0/C)
= \bigwedge_{\varepsilon > 0} \tp(\bar a^\varepsilon/C)$.

\begin{dfn}
  To an arbitrary theory $T$ we define $\kappa(T)$ to be the least
  infinite cardinal, if such exists, such that for every complete type
  $p(\bar x)$ over a set $A$, and for every $\varepsilon > 0$,
  there is a subset $A_0 \subseteq A$ such that
  $p(\bar x^\varepsilon)$ does not divide over $A_0$.

  We say that $T$ is \emph{simple} if $\kappa(T) \leq |\cL|^+$, and
  that it is supersimple if $\kappa(T) = \aleph_0$.
\end{dfn}

It follows from our earlier results that every stable theory is
simple.
It is true (but we shall not require it) that if $T$ is not simple
then $\kappa(T) = \infty$.

\begin{dfn}
  We say that $T$ is \emph{$\lambda$-stable} if for every set $A$,
  $|A| \leq \lambda$, the metric density character of $\tS_n(A)$ is at
  most $\lambda$.
  We define $\lambda_0(T)$ to be the least (infinite) cardinal of
  stability for $T$.
  We say that $T$ is \emph{superstable} if it is $\lambda$-stable for
  all $\lambda$ big enough.
\end{dfn}

In the context of Henson's logic for Banach space, this definition
dates back to Iovino \cite{Iovino:StableBanach}.
It was shown in \cite{BenYaacov-Usvyatsov:CFO} that $T$ is stable if
and only if it is $\lambda^{|\cL|}$-stable for all $\lambda$.
In particular, $T$ is stable if and only if
$\lambda_0(T) < \infty$, in which case $\lambda_0(T) \leq 2^{|\cL|}$.
The following is an example for a result whose statement and proof
translate word-for-word to the continuous logic setting.

\begin{fct}[{\cite[Theorem~4.13]{BenYaacov:Morley}}]
  A theory $T$ is $\lambda$-stable if and only if
  $\lambda^{<\kappa(T)} = \lambda \geq \lambda_0(T)$.
\end{fct}

\begin{cor}
  \label{cor:SuperstableSupersimple}
  A theory is superstable if and only if it is stable and
  supersimple.
\end{cor}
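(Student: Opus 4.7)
The plan is to deduce both directions directly from the preceding Fact, together with basic cardinal arithmetic; the statement is essentially a distilled corollary of that characterisation of $\lambda$-stability.

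For the implication superstable $\Longrightarrow$ stable and supersimple: if $T$ is $\lambda$-stable for some $\lambda$, then by definition $\lambda_0(T) \leq \lambda < \infty$, so $T$ is stable and in particular simple (as noted after the definition of simplicity). To see that $\kappa(T) = \aleph_0$, I would argue contrapositively: assume $\kappa(T) \geq \aleph_1$. Then for every infinite $\lambda$ we have $\lambda^{<\kappa(T)} \geq \lambda^{\aleph_0}$. Choosing $\lambda$ of cofinality $\aleph_0$ arbitrarily large (for instance $\lambda = \beth_\omega(\mu)$ for any desired $\mu$), König's theorem gives $\lambda^{\aleph_0} > \lambda$, so $\lambda^{<\kappa(T)} \neq \lambda$, and the Fact says $T$ is not $\lambda$-stable. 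Since such $\lambda$ exist arbitrarily high, $T$ fails to be superstable. Since $\kappa(T)$ is infinite by definition, we conclude $\kappa(T) = \aleph_0$.

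For the converse, assume $T$ is stable and supersimple, so $\lambda_0(T) < \infty$ and $\kappa(T) = \aleph_0$. For any infinite $\lambda \geq \lambda_0(T)$ we have $\lambda^{<\aleph_0} = \lambda$ trivially, so the Fact yields that $T$ is $\lambda$-stable. Hence $T$ is $\lambda$-stable for all $\lambda \geq \lambda_0(T)$, i.e., superstable.

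No real obstacle is expected: the only nontrivial point is invoking König's theorem to produce arbitrarily large $\lambda$ with $\lambda^{\aleph_0}>\lambda$ so that the failure of $\lambda$-stability contradicts superstability; everything else is unwinding definitions and a direct appeal to the cited Fact.
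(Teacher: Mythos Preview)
Your argument is correct. The paper states this as a corollary of the preceding Fact without giving any proof, and what you have written is precisely the intended cardinal-arithmetic deduction from that Fact.
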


Many of the arguments that follow are valid both for stable and for
simple theories, and are stated as such, even though no development of
simplicity theory for continuous logic exists in the literature.
The reader may either refer to the development of simplicity in the
context of cats
\cite{BenYaacov:SimplicityInCats,BenYaacov:ThicknessAndCatTSF},
which encompasses that of continuous logic, or simply restrict his or
her attention to the stable case.

In light of \fref{cor:SuperstableSupersimple} and of the definition of
$\kappa(T)$ we wish to define global forking ranks $\SU_\varepsilon$
for $\varepsilon > 0$ such that
$\SU_\varepsilon(\bar a/C) > \SU_\varepsilon(\bar a/C\bar b)$
if and only if $\bar a$ ``$\varepsilon$-depends'' on $\bar b$ over
$C$.
We have quite a bit of liberty in choosing what
``$\varepsilon$-depends'' should mean
(i.e., different choices can give rise to ranks which have all the
properties we seek).
For example, we could say that this happens if
$\tp(\bar a^\varepsilon/C\bar b)$ forks over $C$, or if
$\tp(\bar a^\varepsilon/C\bar b) \wedge \tp(\bar a/\acl^{eq}(C))$
forks over $C$.
For consistency with earlier work we shall opt for a slightly more
complex definition which was given in
\cite{BenYaacov:SuperSimpleLovelyPairs} and which has some advantages
over other definitions for the purposes of that paper.
\begin{dfn}
  Let $\bar a$ and $\bar b$ be tuples, $C$ a set and
  $\varepsilon > 0$.
  We keep in mind that $\bar a^\varepsilon$ represents
  ``$\bar a$ known up to distance $\varepsilon$''.
  \begin{enumerate}
  \item We say that an indiscernible sequence
    $(\bar b_n)_{n \in \bN}$
    \emph{could be in $\tp(\bar b/\bar a^\varepsilon C)$}
    if there are $C'$ and a sequence $(\bar a_n)_n$
    such that $(\bar a_n\bar b_n)_n$ is $C'$-indiscernible,
    $\bar a_n\bar b_nC' \equiv \bar a\bar bC$
    and $d(\bar a_0,\bar a_1) \leq \varepsilon$.
  \item We say that $\bar a^\varepsilon \ind_C \bar b$ if every
    indiscernible sequence in $\tp(\bar b/C)$ could be in
    $\tp(\bar b/\bar a^\varepsilon C)$.
  \item We define $\SU_\varepsilon(\bar a/C)$ as may be expected:
    $\SU_\varepsilon(\bar a/C) \geq \alpha+ 1$ if and only if there is
    $\bar b$ such that $\bar a^\varepsilon \nind_C \bar b$
    and $\SU_\varepsilon(\bar a/C\bar b) \geq \alpha$.
    (In \cite{BenYaacov:SuperSimpleLovelyPairs} the notation
    $\SU(\bar a^\varepsilon/C)$ was used.)
  \end{enumerate}
\end{dfn}

A few justifications for these definitions may be in place.
Indeed, for $\varepsilon = 0$ it is easy to see that
a sequence $(\bar b_n)$ could be in $\tp(\bar b/\bar a^0C)$ if and
only if it admits a conjugate which is in $\tp(\bar b/\bar aC)$ in the
ordinary sense.
By a compactness argument, this is further equivalent to
the property that $(\bar b_n)$ could be in
$\tp(\bar b/\bar a^\varepsilon C)$ for every $\varepsilon > 0$.

Next, we wish to justify the notation $\ind$.
The result in \cite{BenYaacov:SuperSimpleLovelyPairs} on which this
justification relies does not immediately make sense in continuous
logic, since at some point it uses a local ranks argument which is
specific to the formalism of compact abstract theories.
The core of that argument resides in the following technical result,
which indeed can be proved very quickly in many different contexts
(simple or stable theories, classical logic, continuous logic, cats)
using the appropriate local ranks.
Since the notion of local forking ranks varies drastically between
contexts we shall provide here a more combinatorial and therefore more
universal argument.

\begin{lem}
  \label{lem:AvoidLocalRank}
  Let $a$, $b$ and $c$ be three tuples, possibly infinite,
  in a stable or even simple theory
  (classical, continuous, or any other setting in which basic
  simplicity or stability hold).
  Assume moreover that $b \equiv_a c$.
  Then $a \ind_b c$ if and only if $a \ind_c b$.
\end{lem}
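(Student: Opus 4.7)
The plan is to exploit an $a$-fixing automorphism witnessing $b\equiv_a c$, iterate it to build a long orbit, extract an $a$-indiscernible sequence from the orbit by Ramsey/compactness, and conclude via the dividing characterisation of non-forking.

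First I would invoke symmetry of forking in simple (or stable) theories, so that $a\ind_b c$ is equivalent to $c\ind_b a$ and $a\ind_c b$ is equivalent to $b\ind_c a$. Since the hypothesis $b\equiv_a c$ is symmetric in $b$ and $c$, it suffices to prove one implication, say $a\ind_b c\Rightarrow a\ind_c b$. Next, choose $\sigma\in\Aut(\fM/a)$ with $\sigma(b)=c$ and set $c_n=\sigma^n(b)$ for $n\in\bZ$, obtaining an orbit with $c_0=b$, $c_1=c$, adjacent pairs satisfying $(c_n,c_{n+1})\equiv_a(b,c)$, and (by $a$-invariance applied to $a\ind_b c$) the relation $a\ind_{c_n} c_{n+1}$ for every $n$.

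Then I would extract from $(c_n)$ an $a$-indiscernible sequence $(d_n)_{n<\omega}$ by an Erd\H{o}s--Rado/compactness argument, arranged so that $\tp(d_0,d_1/a)=\tp(b,c/a)$. After an $a$-fixing automorphism we may then assume $d_0=b$ and $d_1=c$. By $a$-indiscernibility, $(d_0,d_n)\equiv_a(b,c)$ for every $n\geq 1$, so each $d_n$ with $n\geq 1$ is an $ab$-conjugate of $c$, and $(d_n)_{n\geq 1}$ is $ab$-indiscernible, hence $c$-indiscernible. Combined with the transferred independences $a\ind_{d_n}d_{n+1}$ (pushed to $a\ind_{d_i}d_j$ for $i<j$ by indiscernibility), this lets one verify the dividing criterion for $a\ind_c b$ via a standard argument with indiscernible sequences and compactness.

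The main obstacle is the extraction step, specifically the requirement that the adjacent $2$-type of $(d_n)$ over $a$ coincide with $\tp(b,c/a)$. A naive Ramsey extraction only guarantees some cofinally realised $2$-type $\tp(c_i,c_j/a)=\tp(b,c_{j-i}/a)$, which need not equal $\tp(b,c/a)$ since differences $j-i>1$ may realise distinct $2$-types over $a$. This is precisely the place where classical proofs invoke local forking ranks to bound dividing chains; the combinatorial workaround must exploit the coherent $\sigma$-action on the orbit together with saturation of $\fM$, likely via an iterative refinement of the extraction or by threading the argument through automorphisms of longer initial segments, to force the adjacent $2$-type to match $\tp(b,c/a)$. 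Once this delicate step is achieved, the remainder of the proof is routine manipulation of indiscernible sequences and the dividing criterion.
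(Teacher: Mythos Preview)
The gap you flag is real, and the approach as written does not close. Iterating a single $\sigma\in\Aut(\fM/a)$ gives an orbit $(c_n)$ whose $2$-types over $a$ depend only on the difference $j-i$, but there is no mechanism forcing Ramsey extraction to land on the difference $1$. Worse, even granting an $a$-indiscernible $(d_n)$ with $d_0d_1\equiv_a bc$ and $a\ind_{d_i}d_j$ for $i<j$, your conclusion is still unclear in the simple (non-stable) case: indiscernible sequences need not be totally indiscernible, so $a\ind_{d_0}d_1$ does not by itself yield $a\ind_{d_1}d_0$. Your sketch ``verify the dividing criterion via a standard argument'' does not name what that argument is, and I do not see one that works uniformly across the simple setting.

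The paper's proof takes a quite different route and sidesteps both obstacles. It argues by contradiction, assuming $a\ind_c b$ but $a\nind_b c$, and builds a sequence $(b_n)$ inductively (not via an automorphism orbit): $b_0=b$, $b_1=c$, and at each step $b_{n+2}$ is chosen with $b_{n+1}b_{n+2}\equiv_a bc$ and $b_{n+2}\ind_{ab_{n+1}}b_{\leq n}$. The hypothesis $a\ind_c b$, transported along each adjacent pair, feeds an independence calculus yielding $ab_{n+1}\ind_{b_n}b_{<n}$. Now the dividing witness for $a\nind_b c$ is transported to each step and, using the independence just obtained, made indiscernible over the whole initial segment; this produces, by compactness, arbitrarily long chains $a\nind_{b_{<i}c_{<i}}b_ic_i$, contradicting local character. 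No Ramsey extraction and no $2$-type matching is needed: the adjacent type is \emph{imposed} at each inductive step, and the contradiction is with $\kappa(T)<\infty$ rather than with any indiscernibility-based criterion.
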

\begin{proof}
  Assume not, say $a \ind_c b$ but $a \nind_b c$.
  We construct by induction a sequence
  $(b_n)_{n\in \bN}$ such that
  $b_nb_{n+1} \equiv_a bc$
  and $ab_{n+1} \ind_{b_n} b_{<n}$
  for all $n$.
  We start with $b_0 = b$, $b_1 = c$.
  Then for each $n$ we can choose $b_{n+2}$ such that
  $b_{n+1}b_{n+2} \equiv_a bc$ and
  $b_{n+2} \ind_{ab_{n+1}} b_{\leq n}$.
  Our induction hypothesis tells us that
  $ab_{n+1} \ind_{b_n} b_{<n}$
  and $a \ind_{b_{n+1}} b_n$.
  By standard independence calculus we obtain
  $a \ind_{b_{n+1}} b_{\leq n}$,
  whence
  $ab_{n+2} \ind_{b_{n+1}} b_{\leq n}$, as desired.

  Since $a \nind_b c$,
  there exists a $b$-indiscernible sequence $(c^k)^k$, starting
  with $c^0 = c$, such that there exists no $a'$ satisfying
  $a'c^k \equiv_b ac$ for all $k$.
  For each $n$ we may choose a copy $(c_n^k)_k$
  such that $(c_n^k)_k,b_{n+1},b_n \equiv (c^k)_k,c,b$,
  so in particular this copy starts with
  $c_n^0 = b_{n+1}$ and is indiscernible over $b_n$.
  Since $b_{n+1} \ind_{b_n} b_{<n}$, we may choose $(c_n^k)_k$ to
  be indiscernible over $b_{\leq n}$.

  Let us now define $c_n = b_{n+1}$ for all $n$
  and consider the sequence $(b_nc_n)_n$.
  Applying compactness, we can find for arbitrarily big $\lambda$ a
  sequence $(b_ic_i)_{i<\lambda}$ in $\tp(bc/a)$,
  such that for each $i$ there exists a sequence
  $(c_i^k)_k$ which is indiscernible over $b_{\leq i}c_{<i}$
  such that $(c_i^k)_k,c_i,b_i \equiv (c^k)_k,c,b$,
  so in particular $c_i^0 = c_i$.
  Each of these sequences witnesses that
  $a \nind_{b_{<i}c_{<i}} b_ic_i$, contradicting the local character.
\end{proof}

We can now prove:
\begin{fct}[{\cite[Lemma~1.13]{BenYaacov:SuperSimpleLovelyPairs}}]
  Assume that $T$ is simple (or stable).
  For $\bar a^\varepsilon$, $C$ and $\bar b$,
  the following imply one another from top to bottom:
  \begin{enumerate}
  \item $\bar a^\varepsilon \ind_C \bar b$.
  \item There is a Morley sequence for $\bar b$ over $C$ which could
    be in $\tp(\bar b/\bar a^\varepsilon C)$.
  \item $\tp(\bar a^\varepsilon/\bar bC)$ does not fork over $C$.
  \item $\bar a^{2\varepsilon} \ind_C \bar b$.
  \end{enumerate}
\end{fct}
\begin{proof}
  We only need to provide a proof for (ii) $\Longrightarrow$ (iii),
  for which the original proof used local ranks.

  Indeed let $(\bar a_n)_n$ and $C'$ witness that
  a Morley sequence $(\bar b_n)$ could be in
  $\tp(\bar b/\bar a^\varepsilon C)$.
  We may assume that $(\bar a_n\bar b_n)_n$ is indiscernible over
  $CC'$, and we may further assume that for some $\tilde a$,
  the pair $\tilde a\bar b$ continues the sequence
  $(\bar a_n\bar b_n)_n$ indiscernibly over $CC'$.
  Standard arguments regarding indiscernibility provide that
  $\bar b \ind_{(\bar b_n)_n} CC'a_0$.
  Since $\bar b \ind_C (\bar b_n)_n$ we obtain
  $\bar b \ind_C C'\bar a_0$.
  By \fref{lem:AvoidLocalRank}
  we have
  $\bar b \ind_{C'} C$ and thus
  $\bar b \ind_{C'} \bar a_0$.
  Let $f$ be an automorphism sending $\tilde a\bar bC'$ to
  $\bar a\bar bC$.
  Then $\bar b \ind_C f(\bar a_0)$ and
  $d(f(\bar a_0),\bar a) = d(\bar a_0,\tilde a)\leq \varepsilon$.
  Thus $\tp(\bar a^\varepsilon/\bar bC)$ does not fork over $C$,
  as desired.
\end{proof}

Thus in particular $\bar a^0 \ind_C \bar b$ if and only if
$\tp(\bar a/\bar bC)$ does not fork over $C$, i.e., if and only if
$\bar a \ind_C \bar b$, justifying our notation.
By earlier observations, this is further equivalent to
$\bar a^\varepsilon \ind_C \bar b$ holding for all $\varepsilon > 0$.
It is further shown in \cite{BenYaacov:SuperSimpleLovelyPairs}
that $T$ is supersimple if and only
if $\SU_\varepsilon(\bar a/B)$ is ordinal for every finite tuple $\bar a$ and
$\varepsilon > 0$.
Moreover, in a supersimple theory $T$, $\SU_\varepsilon$ ranks
characterise independence: $\bar a \ind_C B$ if and only if
$\SU_\varepsilon(\bar a/C) = \SU_\varepsilon(\bar a/BC)$ for all
$\varepsilon > 0$.

Since our global ranks depend (inevitably) on a metric resolution
parameter $\varepsilon$ we may only hope to characterise genericity in
case the metric is invariant under the group action, i.e., if the
action of each $g \in G$ on $S$ is an isometry.

We have seen that if $g$ is generic over $s,A$ then $gs$ is generic
over $A$.
We now prove a converse:
\begin{lem}
  Assume $\langle G,S\rangle$ is a type-definable transitive group
  action in a stable theory $T$,
  $s \in S$ generic over a set $A$,
  $t \in S$ satisfying $t \ind_A s$.
  Then there is $g \in G$, $g \ind_A t$ such that $gs = t$.
  Moreover, $g$ can be chosen generic over $A$ (i.e., over $At$).
\end{lem}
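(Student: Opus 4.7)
My plan is to reduce to the special case where $t$ is itself generic over $A$, and then handle that case using the transitivity of the $G$-action on generic types in $\tS_S(\fM)$.

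\emph{Reduction.} By \fref{prp:GenTypeExt} and \fref{cor:CompleteNFExt} pick $g_0 \in G$ generic over $A$ with $g_0 \ind_A \{s, t\}$, and set $t_1 := g_0 t$. By \fref{thm:GenTypeProps}.\fref{item:GenTypeProps1}, $t_1$ is generic over $A$ with $t_1 \ind_A t$; and a short independence calculation using $g_0 \ind_A \{s, t\}$ together with $t \ind_A s$ yields $t_1 \ind_A s$.

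\emph{Reduced case and combination.} Now $s$ and $t_1$ are both generic over $A$ with $s \ind_A t_1$. I claim there is $g_1 \in G$ generic over $A$, with $g_1 \ind_A \{g_0, t_1\}$ and $g_1 s = t_1$. Granting this, set $g := g_0^{-1} g_1$; then $gs = g_0^{-1} g_1 s = g_0^{-1} t_1 = t$. Since $g_0^{-1}$ is generic over $A$ by \fref{thm:GenTypeProps}(iii), and $g_1$ is generic over $A$ with $g_1 \ind_A g_0^{-1}$, applying \fref{thm:GenTypeProps}.\fref{item:GenTypeProps1} to $G$ acting on itself on the left shows $g$ is generic over $A$. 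For $g \ind_A t$, note that $\dcl(g_0, t) = \dcl(g_0, t_1)$ (since $t_1 = g_0 t$), so $g_1 \ind_A \{g_0, t_1\}$ implies $g_1 \ind_{Ag_0} t$; combined with $g_0 \ind_A t$ and transitivity of independence, one gets $\{g_0, g_1\} \ind_A t$, whence $g \in \dcl(g_0, g_1)$ is independent from $t$ over $A$.

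\emph{Main obstacle.} The crux is producing $g_1$ in the reduced case. Since both $s$ and $t_1$ are generic over $A$, the Proposition preceding \fref{thm:GenTypeProps} (transitivity of the $G$-action on generic types of $\tS_S(\fM)$) furnishes a group element relating their global non-forking extensions. Turning this into the element-level identity $g_1 s = t_1$ with the required independence properties is best done in the auxiliary stable structure $\hat\fM$ of Section 7: the condition ``$ys = t_1$'' defines a partial $\hat\cL$-type on $y$, and an application of \fref{lem:NFExtExist} inside $\hat\fM$ (to an appropriate $\emptyset$-invariant hull of this type) yields a realization whose $\hat\fM$-type does not fork over $\emptyset$. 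By \fref{lem:GenSetNF} this produces $g_1 \in G$ generic, and the required independence $g_1 \ind_A \{g_0, t_1\}$ follows by monotonicity of non-forking from $\emptyset$ to $A\cup\{g_0,t_1\}$.
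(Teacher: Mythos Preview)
Your reduction and combination steps are correct, granted the reduced-case claim. The gap is entirely in the ``Main obstacle'' paragraph. The transitivity proposition you invoke acts on generic \emph{types} over $\fM$, not on the specific elements $s,t_1 \in \fM$; you acknowledge this, but the proposed $\hat\fM$ fix does not work as written. The condition ``$ys = t_1$'' carries the parameters $s,t_1$ in the $S$-sort and is not $\emptyset$-invariant in $\hat\fM$; your ``$\emptyset$-invariant hull'' is left undefined, and the natural candidate (the closure under $\Aut(\hat\fM)$) would only yield $g_1$ with $g_1 s' = t_1'$ for some conjugate pair, not the given one. \fref{lem:GenSetNF} characterises genericity of conditions on $x \in S$, not on $y \in G$, so it does not tell you that $g_1$ is generic in $G$. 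Finally, ``monotonicity of non-forking from $\emptyset$ to $A\cup\{g_0,t_1\}$'' runs the wrong direction: non-forking over a smaller base does not imply non-forking over a larger one, and in any case nothing in your construction relates $g_1$ to the extraneous parameter $g_0$.

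The paper's argument is direct and needs neither the reduction to generic $t$ nor $\hat\fM$. After naming $A$: choose $g \in G$ generic with $g \ind s,t$; then $s$ is generic over $g,t$, so $gs \ind g,t$, and a short forking calculation gives $g \ind gs,t$. Now use \emph{plain} transitivity of the action to pick $h \in G$ with $h(gs) = t$, chosen so that $h \ind_{gs,t} g$. Then $g$ is generic over $h,gs,t$, hence so is $hg$ by left translation by $h$, and $g' = hg$ satisfies $g's = t$ with $g'$ generic over $t$.
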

\begin{proof}
  We may assume $A = \emptyset$.
  First choose $g \in G$ generic, $g \ind s,t$.
  Then $s$ is generic over $g,t$,
  so $gs \ind g,t$
  By standard independence calculus we obtain $g \ind gs,t$.
  Since the action is transitive we can find $h \in G$ such that
  $hgs = t$, and we may take it so that $h \ind_{gs,t} g$.
  Then $g$ is generic over $t,gs,h$, and so is
  $gh$, and in particular $hg \ind t$.
  Then $g' = hg$ is
  generic over $t$ as required.
\end{proof}

\begin{thm}
  Assume $\langle G,S\rangle$ is a type-definable transitive group action
  with an invariant metric in a superstable continuous
  theory $T$, $p \in \tS_S(A)$.
  Then $p$ is generic if and
  only if
  $\SU_\varepsilon(p)
  = \SU_\varepsilon(S)
  = \sup \{ \SU_\varepsilon(q)\colon q \in \tS_S(\emptyset) \}$
  for all $\varepsilon > 0$.
  In particular, types of maximal $\SU_\varepsilon$-rank exist.
\end{thm}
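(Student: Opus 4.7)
The central ingredient is metric invariance of the $G$-action at the level of $\SU_\varepsilon$-ranks: for every $g \in G$ and every set $C \ni g$, the map $s \mapsto gs$ is a $C$-definable isometric bijection of $S$, and therefore
\begin{gather*}
  \SU_\varepsilon(gs/C) = \SU_\varepsilon(s/C) \quad \text{for every } s \in S, \ \varepsilon > 0.
\end{gather*}
I would prove this preliminary claim by induction on the rank directly from the definition: the auxiliary notion ``$(\bar b_n)$ could be in $\tp(\bar b/\bar a^\varepsilon C)$'' constrains $\bar a$ only through the metric data $d(\bar a_0,\bar a_1) \leq \varepsilon$, which is preserved by any isometric bijection, and the types $\tp(s/C)$ and $\tp(gs/C)$ determine each other via the mutually inverse $C$-definable maps $s \mapsto gs$ and $gs \mapsto s$.

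With this in hand, I would show that every generic type over $\emptyset$ achieves $\SU_\varepsilon(S)$. Given any $q \in \tS_S(\emptyset)$, pick $t \models q$, use \fref{prp:GenTypeExt} to find $s$ generic over $t$ (in particular $s \ind t$), and apply the preceding lemma to produce $g \in G$ generic over $t$ (so $g \ind t$ and $g$ generic over $\emptyset$) with $gs = t$. Then for every $\varepsilon > 0$,
\begin{gather*}
  \SU_\varepsilon(q) = \SU_\varepsilon(t) = \SU_\varepsilon(t/g) = \SU_\varepsilon(g^{-1}t/g) = \SU_\varepsilon(s/g) \leq \SU_\varepsilon(s),
\end{gather*}
by definition, the $\SU_\varepsilon$-characterisation of independence in supersimple theories applied to $g \ind t$, the isometry claim applied to $g^{-1}$, substitution, and monotonicity. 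Taking the supremum over $q$, every generic $p^* \in \tS_S(\emptyset)$ satisfies $\SU_\varepsilon(p^*) = \SU_\varepsilon(S)$ for all $\varepsilon$. The forward direction of the theorem now follows from \fref{prp:GenTypeNFExt}: a generic $p \in \tS_S(A)$ does not fork over $\emptyset$ and has $p\rest_\emptyset$ generic, so $\SU_\varepsilon(p) = \SU_\varepsilon(p\rest_\emptyset) = \SU_\varepsilon(S)$.

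For the converse, assume $\SU_\varepsilon(p) = \SU_\varepsilon(S)$ for every $\varepsilon > 0$. Monotonicity and the definition of the supremum force $\SU_\varepsilon(p) = \SU_\varepsilon(p\rest_\emptyset) = \SU_\varepsilon(S)$, so $p$ does not fork over $\emptyset$; by \fref{prp:GenTypeNFExt} it is enough to show $q := p\rest_\emptyset$ is generic. Running the construction of the previous paragraph with this $q$, pick $t \models q$, $s$ generic over $t$, and $g$ generic over $t$ with $gs = t$. Because $q$ already has maximal $\SU_\varepsilon$-rank, the displayed chain of (in)equalities collapses to a chain of equalities, forcing $\SU_\varepsilon(s/g) = \SU_\varepsilon(s)$ for every $\varepsilon > 0$ and hence $s \ind g$. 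Since $g$ is generic over $\emptyset$ and $s \ind g$, \fref{thm:GenTypeProps}.\fref{item:GenTypeProps1} gives that $gs = t$ is generic over $\emptyset$, so $q$ is generic. The ``in particular'' clause is immediate from the existence of generic types (\fref{prp:GenTypeExt}). The main obstacle is the preliminary isometry claim: it requires unpacking the somewhat technical $\varepsilon$-tolerance definition of $\SU_\varepsilon$ and checking that the auxiliary ``could be in'' relation is insensitive to isometric translation by $g$.
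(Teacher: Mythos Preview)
Your proposal is correct and follows essentially the same strategy as the paper: both hinge on the isometry claim that $\SU_\varepsilon(gs/C) = \SU_\varepsilon(s/C)$ whenever $g \in C$, and both run the same chain of (in)equalities for the forward direction. The only substantive difference is in the converse: the paper verifies the genericity criterion of \fref{thm:GenTypeProps}(ii) directly, by taking an \emph{arbitrary} $g \ind s$ and showing $\SU_\varepsilon(gs/g) = \SU_\varepsilon(s/g) = \SU_\varepsilon(s) \geq \SU_\varepsilon(gs) \geq \SU_\varepsilon(gs/g)$, whence $gs \ind g$; you instead re-use the specific $g$ produced by the lemma from the forward direction and then appeal to \fref{thm:GenTypeProps}(i). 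Your route works but is a small detour---the paper's argument avoids invoking the lemma a second time. On the other hand, your explicit reduction from $A$ to $\emptyset$ via non-forking and \fref{prp:GenTypeNFExt} is more careful than the paper's bare ``we may assume $A = \emptyset$''.
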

\begin{proof}
  We may assume that $A = \emptyset$.
  We shall use the fact that if $p \in \tS_n(B)$, $q \in \tS_m(B)$ and
  $f\colon p(\fM) \to q(\fM)$ is $B$-definable and isometric then
  $\SU_\varepsilon(p) = \SU_\varepsilon(q)$ for all $\varepsilon > 0$.
  The proof of this fact is left as an exercise
  to the reader.

  Let $s \models p$, and assume first that $p$ is generic.
  Let $t \in S$ realise an arbitrary type over $\emptyset$.
  We may nonetheless assume that $t \ind s$.
  By the Lemma there exists $g \ind t$ such that
  $gs = t$.
  Since multiplication by $g$ is isometric we obtain
  $\SU_\varepsilon(s) \geq \SU_\varepsilon(s/g)
  =\SU_\varepsilon(t/g) = \SU_\varepsilon(t) = \SU_\varepsilon(q)$.

  Conversely, let $s \in S$ and assume that
  $\SU_\varepsilon(s) \geq \SU_\varepsilon(q)$
  for all $q \in \tS_S(\emptyset)$ and all $\varepsilon > 0$.
  Let $g \in G$, $g \ind_A s$.
  Then
  $\SU_\varepsilon(gs/g) = \SU_\varepsilon(s/g) = \SU_\varepsilon(s)
  \geq \SU_\varepsilon(gs) \geq \SU_\varepsilon(gs/g)$.
  Thus equality holds all the way for all $\varepsilon > 0$, whereby
  $gs \ind g$, so $s$ is generic.
\end{proof}

\providecommand{\bysame}{\leavevmode\hbox to3em{\hrulefill}\thinspace}

\end{document}